\newtheorem{theorem}{Theorem}[section]
\newtheorem{lemma}[theorem]{Lemma}
\newtheorem{corollary}[theorem]{Corollary}
\theoremstyle{definition}
\theoremstyle{remark}
\numberwithin{equation}{section}
\newcommand{\spt}{\mbox{\rm spt}}
\newcommand{\SL}{\mbox{SL}}
\DeclareSymbolFont{AMSb}{U}{msb}{m}{n}
\DeclareMathSymbol{\Z}{\mathalpha}{AMSb}{"5A}
\DeclareMathSymbol{\nmid}{\mathrel}{AMSb}{"2D}
\DeclareSymbolFont{AMSb}{U}{msb}{m}{n}
\DeclareMathSymbol{\C}{\mathalpha}{AMSb}{"43}
\DeclareMathSymbol{\F}{\mathalpha}{AMSb}{"46}
\DeclareMathSymbol{\N}{\mathalpha}{AMSb}{"4E}
\DeclareMathSymbol{\Q}{\mathalpha}{AMSb}{"51}
\DeclareMathSymbol{\R}{\mathalpha}{AMSb}{"52}
\DeclareMathSymbol{\Z}{\mathalpha}{AMSb}{"5A}
\begin{document}
%%BEGIN MACROS%%%%%%%%%%%%%%%%%%%%%%%%%%%%%%%%%%%%%%%%%%%%%%%%%%%%%%%%%%%%%%%
\newcommand{\beqs}{\begin{equation*}}
\newcommand{\eeqs}{\end{equation*}}
\newcommand{\beq}{\begin{equation}}
\newcommand{\eeq}{\end{equation}}
%%%%%%%%%%%%%%%%%%%%%%%%%%%%%%%%%%%%%%%%%%%%%%%%%%%%%%%%%%%%
%%MYLABELEX\newcommand\mylabel[1]{\quad\mbox{#1}\label{#1}}
%%MYLABELNO\newcommand\mylabel[1]{\label{#1}}
\newcommand\mylabel[1]{\label{#1}}
%%%%%%%%%%%%%%%%%%%%%%%%%%%%%%%%%%%%%%%%%%%%%%%%%%%%%%%%%%%%
\newcommand\eqn[1]{(\ref{eq:#1})}
\newcommand\thm[1]{\ref{thm:#1}}
\newcommand\lem[1]{\ref{lem:#1}}
\newcommand\propo[1]{\ref{propo:#1}}
\newcommand\cor[1]{\ref{cor:#1}}
\newcommand\sect[1]{\ref{sec:#1}}
\newcommand\leg[2]{\genfrac{(}{)}{}{}{#1}{#2}} %Legendre symbol
\newcommand\ord{\mbox{ord}\,}
\newcommand\mytimes{\cdot}
\newcommand{\stroke}{\,\mid\,}
\newcommand{\Tr}{\mbox{Tr}}
%%% special macs
%%END   MACROS%%%%%%%%%%%%%%%%%%%%%%%%%%%%%%%%%%%%%%%%%%%%%%%%%%%%%%%%%%%%%%%

\title[Andrews' spt-function modulo powers of $5$, $7$ and $13$]{
Congruences for Andrews' spt-function \\
modulo powers of $5$, $7$ and $13$}

%%PRELIMINARY VERSION 

% Information for first author
\author{F.~G.~Garvan}
%    Address of record for the research reported here
\address{Department of Mathematics, University of Florida, Gainesville,
Florida 32611-8105}
%    Current address
%%\curraddr{Department of Mathematics and Statistics,
%%Case Western Reserve University, Cleveland, Ohio 43403}
\email{fgarvan@ufl.edu}          
%    \thanks will become a 1st page footnote.
\thanks{The author was supported in part by NSA Grant H98230-09-1-0051.
The first draft of this paper was written September 20, 2010.
%%Most of the research for this paper was done while the author was visiting
%%Mike Hirschhorn at the University of New South Wales in June 2010.
}

%    Information for second author
%%\author{Frank G. Garvan}
%%\address{Department of Mathematics, University of Florida, Gainesville,
%%Florida 32611-8105}
%%\email{frank@math.ufl.edu}          
%%%%\thanks{Support information for the second author.}

%    General info
\subjclass[2010]{Primary 11P83, 11F33, 11F37;
Secondary 11P82, 05A15, 05A17}
%%OLD0: \subjclass[2000]{Primary 11P83, 11F11, 11F20, 11F33, 11F37; Secondary
%%05A17, 11P81}

\date{\today}  %%
%%\date{September 20, 2010}  %%
%%\date{October 21, 2010}    %%

%%\dedicatory{Dedicated to the memory of A.O.L. (Oliver) Atkin}
%% (1925-2008)

%% Updated 10-21-10
\keywords{Andrews's spt-function, weak Maass forms, congruences,
partitions, modular forms}

\begin{abstract}
Congruences are found modulo powers of $5$, $7$ and $13$ for
Andrews' smallest parts partition function $\spt(n)$. These congruences
are reminiscent of Ramanujan's partition congruences modulo
powers of $5$, $7$ and $11$. Recently, Ono proved explicit Ramanujan-type
congruences for $\spt(n)$ modulo $\ell$ for all primes $\ell\ge5$ which
were conjectured earlier by the author. We extend Ono's method to
handle the powers of  $5$, $7$ and $13$ congruences. We need the
theory of weak Maass forms as well as certain classical modular
equations for the Dedekind eta-function.
\end{abstract}

\maketitle

%section 1%%%%%%%%%%%%%%%%%%%%%%%%%%%%%%%%%%%%%%%%%%%%%%%%%%%%%%%%%%%%%%%%%%%
\section{Introduction} \mylabel{sec:intro}

Andrews \cite{An08b} defined the function $\spt(n)$ as the number 
of 
smallest parts in the partitions of $n$. He related this
function to the second rank moment. He also proved some surprising congruences
mod $5$, $7$ and $13$. Namely, he showed that
\begin{equation}
\spt(n) = n p(n) - \frac{1}{2} N_2(n),
\mylabel{eq:sptid}
\end{equation}
where $N_2(n)$ is the second rank moment function \cite{At-Ga}
and $p(n)$ is the number of 
 partitions of $n$, and he proved that
\begin{align}
\spt(5n+4) &\equiv 0 \pmod{5},
\mylabel{eq:spt5cong}\\
\spt(7n+5) &\equiv 0 \pmod{7},
\mylabel{eq:spt7cong}\\
\spt(13n+6) &\equiv 0 \pmod{13}.   
\mylabel{eq:spt13cong}
\end{align}
Bringmann \cite{Br08} studied analytic, arithmetic and asymptotic
properties of the generating function for the second rank moment as
a quasi-weak Maass form. Further congruence properties of Andrews'
spt-function were found by the author \cite{Ga10a}, Folsom and Ono \cite{Fo-On}
and Ono \cite{On10}. In particular, Ono \cite{On10}
proved that if $\leg{1-24n}{\ell}=1$ then
\beq
\spt(\ell^2 n - \tfrac{1}{24}(\ell^2-1)) \equiv 0 \pmod{\ell},
\mylabel{eq:sptellcong}
\eeq
for any prime $\ell \ge 5$. This amazing result was originally conjectured
by the author\footnote{The congruence \eqn{sptellcong} was first
conjectured by the author in a Colloquium given at the University of
Newcastle, Australia on July 17, 2008.}.
Earlier special cases were observed by Tina Garrett \cite{Ga-PC2007}
and her students.
%% Private communication Thu, 18 Oct 2007

%%%%%%%%%%%%%%%%%%%%%%%%%%%%%%%%%%%%%%%%%%%%%%%%%%%%%%%%%%%%%%%%%%%%%%%%%%%%%%%%%%%%%%%%%%
We prove some suprising congruences for $\spt(n)$ modulo powers
of $5$, $7$ and $13$. For $a$, $b$, $c\ge3$,
\begin{align}
\spt(5^a n + \delta_a) + 5\, \spt(5^{a-2} n + \delta_{a-2}) &\equiv 0  \pmod{5^{2a-3}},
\mylabel{eq:spt5acong}\\
\spt(7^b n + \lambda_b) + 7\, \spt(7^{b-2} n + \lambda_{b-2}) &\equiv 0  \pmod{7^{\lfloor\frac{1}{2}(3b-2)\rfloor}},
\mylabel{eq:spt7bcong}\\
\spt(13^c n + \gamma_c) - 13\, \spt(13^{c-2} n + \gamma_{c-2}) &\equiv 0  \pmod{13^{c-1}},   
\mylabel{eq:spt13ccong}
\end{align}
where $\delta_a$, $\lambda_b$ and $\gamma_c$ are the 
least nonnegative residues of the reciprocals of $24$ mod
$5^a$, $7^b$ and $13^c$ respectively.  This together with \eqn{spt5cong}--\eqn{spt13cong}
implies that
\begin{align}
\spt(5^{a}n + \delta_{a}) &\equiv 0
\pmod{5^{\lfloor \frac{a+1}{2}\rfloor}},
\mylabel{eq:introspt5alittlecong}\\
\spt(7^{b}n + \lambda_{b}) &\equiv 0
\pmod{7^{\lfloor \frac{b+1}{2}\rfloor}},
\mylabel{eq:introspt7alittlecong}\\
\spt(13^{c}n + \gamma_{c}) &\equiv 0
\pmod{13^{\lfloor \frac{c+1}{2}\rfloor}},
\mylabel{eq:introspt13alittlecong}
\end{align}
for  $a$, $b$, $c\ge1$. These congruences are reminiscent
of Ramanujan's partition congruences for powers of $5$, $7$ and $11$:
\begin{align}
p(5^{a}n + \delta_{a}) &\equiv 0
\pmod{5^{a}},                                      
\mylabel{eq:ptn5cong}\\
p(7^{b}n + \lambda_{b}) &\equiv 0
\pmod{7^{\lfloor \frac{b+2}{2}\rfloor}},
\mylabel{eq:ptn7cong}\\
p(11^{c}n + \varphi_{c}) &\equiv 0
\pmod{11^{c}},
\mylabel{eq:ptn11cong}
\end{align}
for all  $a$, $b$, $c\ge1$. Here $\varphi_c$ is the reciprocal of $24$
mod $11^c$. The congruences mod powers of $5$ and $7$ were proved
by Watson \cite{Wa38}, although many of the details had been
worked out earlier by Ramanujan in an unpublished manuscript.
The powers of $11$ congruence was proved by Atkin \cite{At67}.

Following Ono \cite{On10}, we define
\beq
 \mathbf{a}(n) := 12 \, \spt(n)  + (24n -1) p(n),
\mylabel{eq:adef}
\eeq
for $n\ge 0$, and define
\beq
\alpha(z) := \sum_{n\ge0}  \mathbf{a}(n) q^{n - \tfrac{1}{24}},
\mylabel{eq:alphadef}
\eeq
where as usual $q = \exp(2\pi i z)$ and $\Im(z) > 0$.
We note that $\spt(0)=0$ and $p(0)=1$.
Bringmann \cite{Br08} showed that $\alpha(24z)$ is the
holomorphic part of a weight $\tfrac{3}{2}$ weak Maass form.
Using this observation and the idea of using 
the weight $\frac{3}{2}$ Hecke operator 
$T(\ell^2)$
to annihilate the nonholomorphic part 
enabled Ono \cite{On10} to prove the general congruence \eqn{sptellcong}.
We use a similar idea. 
Instead of a Hecke operator we use Atkin's $U(\ell)$ operator
to annihilate the nonholomorphic part.  

We show that
\begin{align}
\mathbf{a}(5^a n + \delta_a) + 5\, \mathbf{a}(5^{a-2} n + \delta_{a-2}) &\equiv 0  \pmod{5^{\lfloor\frac{1}{2}(5a-7)\rfloor}},
\mylabel{eq:introa5acong}\\
\mathbf{a}(7^b n + \lambda_b) + 7\, \mathbf{a}(7^{b-2} n + \lambda_{b-2}) &\equiv 0  \pmod{7^{\lfloor\frac{1}{2}(3b-2)\rfloor}},
\mylabel{eq:introa7bcong}\\
\mathbf{a}(13^c n + \gamma_c) - 13\, \mathbf{a}(13^{c-2} n + \gamma_{c-2}) &\equiv 0  \pmod{13^{c-1}},   
\mylabel{eq:introa13ccong}
\end{align}
for all $a$, $b$, $c\ge 3$. We note that \eqn{introa5acong} is a stronger
congruence than \eqn{spt5acong}. The congruences \eqn{spt5acong}--\eqn{spt7bcong}
follow from \eqn{introa5acong}--\eqn{introa7bcong} and Ramanujan's partition
congruences for powers of $5$ and $7$ that were first proved by 
Watson \cite{Wa38}.
The congruence \eqn{spt13ccong} follows easily from \eqn{introa13ccong}.
%%\begin{align}
%%\mathbf{a}(5^{2a-1}n + \delta_{2a+1}) + 5 \,  \mathbf{a}(5^{2a-3}n + \delta_{2a-3}) &\equiv 0
%%\pmod{5^{5a-6}},
%%\mylabel{eq:introa52b1cong}\\
%% \mathbf{a}(5^{2a}n + \delta_{2a}) + 5 \,  \mathbf{a}(5^{2a-2}n + \delta_{2a-2}) &\equiv 0
%%\pmod{5^{5a-4}}.
%%\mylabel{eq:introa52bcong}\\
%% \mathbf{a}(7^{2b-1}n + \lambda_{2b+1}) + 7 \cdot  \mathbf{a}(7^{2b-3}n + \lambda_{2b-3}) &\equiv 0
%%\pmod{7^{3b-3}},
%%\mylabel{eq:introa72b1cong}\\
%% \mathbf{a}(7^{2b}n + \lambda_{2b}) + 7 \cdot  \mathbf{a}(7^{2b-2}n + \lambda_{2b-2}) &\equiv 0
%%\pmod{7^{3b-1}},
%%\mylabel{eq:introa72bcong}
%%\end{align}
%%for all $a$, $b\ge2$.
%%We note that
%%\beq
%%\mathbf{a}(13^{c}n + \gamma_{c}) - 13 \cdot  \mathbf{a}(13^{c-2}n + \gamma_{c-2}) \equiv 0
%%\pmod{13^{c-1}},
%%\mylabel{eq:introa13ccong}
%%\eeq
%%for all $c\ge2$, which is not a stronger form of \eqn{spt13ccong}.

Let $\ell \ge 5$ be prime.
In Section \sect{atkin} we use results of Bringmann  \cite{Br08}
to show how Atkin's $U(\ell)$ operator
can be used to annihilate the nonholomorphic part of
the weight $\tfrac{3}{2}$ weak Maass form that corresponds to the
function $\alpha(24z)$, and prove that the function
\beq
\alpha_\ell(z) := 
\sum_{n=0}^\infty
\left(  \mathbf{a}(\ell n - \tfrac{1}{24}(\ell^2-1)) - \chi_{12}(\ell) \, \ell \, \mathbf{a}\left ( \frac{n}{\ell}\right)\right)
q^{n- \tfrac{\ell}{24}}
\mylabel{eq:alphaelldef}
\eeq
is a weakly holomorphic  weight $\tfrac{3}{2}$ modular form on $\Gamma_0(\ell)$.
Here $\chi_{12}$ is the character given below in \eqn{chi12}, and
we note $ \mathbf{a}(n)=0$ if n is not a nonnegative integer.
We determine the multiplier of this form and exact information about
the orders at cusps. See Theorem \thm{Gell}.
This enables us to prove identities such as
\beq
\alpha_5(z) = \sum_{n=0}^\infty
\left(  \mathbf{a}(5 n - 1)  + 5\, \mathbf{a}\left ( \frac{n}{5}\right)\right)
q^{n- \tfrac{5}{24}} 
=
\frac{5}{4} \frac{ (5{E}_{2}(5z) - E_2(z))}{\eta(5z)}\left( 
125\frac{\eta(5z)^{6}}{\eta(z)^6} - 1\right),
\mylabel{eq:introa5n1id}
\eeq
where $E_2(z)$ is the usual quasimodular Eisenstein series of weight $2$,
and $\eta(z)$ is the Dedekind eta-function. We then use
Watson's \cite{Wa38} and Atkin's \cite{At68} method of
modular equations to prove the congruences 
\eqn{introa5acong}--\eqn{introa13ccong}. These details are carried out
in Section \sect{congs}.
In Section \sect{sptell} we improve some results in
\cite{Ga10a} and \cite{Br-Ga-Ma}
on $\spt(\ell n - \tfrac{1}{24}(\ell^2-1))$ and 
$N_2(\ell n - \tfrac{1}{24}(\ell^2-1))$ modulo $\ell$.

%%%%HERE

%section 2%%%%%%%%%%%%%%%%%%%%%%%%%%%%%%%%%%%%%%%%%%%%%%%%%%%%%%%%%%%%%%%%%%%
\section{The Atkin operator $U_{\ell}^*$} \mylabel{sec:atkin}

In this section we prove that the function $\alpha_\ell(z)$, which is
defined in \eqn{alphaelldef} is 
a weakly holomorphic  weight $\tfrac{3}{2}$ modular form on $\Gamma_0(\ell)$ 
when $\ell\ge5$ is prime. The proof uses results of Bringmann \cite{Br08}
and the idea of using the Atkin operator $U_\ell$ to annihilate
the nonholomorphic part of a certain weak Maass form.

Following Bringmann \cite{Br08} and Ono \cite{On10} we define
\beq
\mathcal{M}(z) := \alpha(24z) - \frac{3i}{\pi\sqrt{2}} \, 
\int_{-\overline{z}}^{i\infty} \frac{\eta(24\tau) \, d\tau}{(-i(\tau+z))^{\tfrac32}},
\mylabel{eq:Mdef}
\eeq
where $\eta(z) := q^{\tfrac{1}{24}}\prod_{n=1}^\infty(1-q^n)$ is the Dedekind
eta-function and $\alpha(z)$ is defined in \eqn{alphadef}. Then $\mathcal{M}(z)$
is a weight $\tfrac{3}{2}$ harmonic Maass form on $\Gamma_0(576)$ with Nebentypus
$\chi_{12}$ where
\beq
\chi_{12}(n) =
\begin{cases}
1 & \mbox{if $n\equiv\pm1\pmod{12}$,}\\
-1 & \mbox{if $n\equiv\pm5\pmod{12}$,}\\
0 & \mbox{otherwise.}
\end{cases}
\mylabel{eq:chi12}
\eeq
Let
\beq
\mathcal{N}(z) = - \frac{3i}{\pi\sqrt{2}} 
\int_{-\overline{z}}^{i\infty} \frac{\eta(24\tau) \, d\tau}{(-i(\tau+z))^{\tfrac32}}
= \frac{3}{\pi\sqrt{2}}
\int_y^\infty \frac{\eta(24(-x+it))\,dt}{(y+t)^{3/2}},
\mylabel{eq:Nint}
\eeq              
where $z=x+iy$, $y>0$, so that
\beq
\mathcal{M}(z) = \alpha(24z) + \mathcal{N}(z).
\mylabel{eq:Mdef2}
\eeq
%%% Equiv of integrals:
%%% Let z = x + i y so that  tau = -conj(z) = -x + it where  t>=y.
%%% Then dtau = i dt
%%% Int ( eta(24*tau) dtau /( -i(tau+z))^3/2), tau=-x+iy .. ioo)
%%% =
%%%  i * Int( eta(24*(-x+it))/(y + t)^(3/2), t=y .. oo)
%%%
We define
\beq
\mathcal{A}(z) := \mathcal{M}\left(\frac{z}{24}\right).
\mylabel{eq:Azdef}
\eeq

The following theorem  follows in a straightforward way from the work 
of Bringmann \cite{Br08}.

\begin{theorem}
\mylabel{thm:Atrans}
$$
\mathcal{A}\left(\frac{az+b}{cz+d}\right) = \frac{ (cz+d)^{3/2} }{\nu_{\eta}(A)}
\mathcal{A}(z),
$$
where $A=
\begin{pmatrix}
a & b \\
c & d
\end{pmatrix} \in \SL_2(\Z)$, and
$\nu_{\eta}(A)$ is the eta-multiplier.
\end{theorem}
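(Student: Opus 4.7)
My plan is to verify the transformation on the generators $T = \begin{pmatrix} 1 & 1 \\ 0 & 1 \end{pmatrix}$ and $S = \begin{pmatrix} 0 & -1 \\ 1 & 0 \end{pmatrix}$ of $\SL_2(\Z)$, using Bringmann's $\Gamma_0(576)$-transformation of $\mathcal{M}(\tau)$ with Nebentypus $\chi_{12}$ as input. The key point is that the rescaling $\tau = z/24$ combined with the fractional prefactor $q^{n - 1/24}$ appearing in $\alpha(z)$ converts the level-$576$ character $\chi_{12}$ into the full-level $\eta$-multiplier $\nu_\eta$, so that the stated identity is the natural full-level avatar of Bringmann's result.

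For $T$, I would decompose $\mathcal{A}(z) = \alpha(z) + \mathcal{N}(z/24)$ via \eqn{Mdef2} and \eqn{Azdef}. From \eqn{alphadef}, the relation $\alpha(z+1) = e^{-\pi i/12}\alpha(z)$ is immediate from the $q^{n - 1/24}$ prefactor. For the completion, a direct shift of the real part of $z$ in the integral \eqn{Nint}, together with the quasi-periodicity $\eta(24\sigma - 1) = e^{-\pi i/12}\eta(24\sigma)$, yields $\mathcal{N}((z+1)/24) = e^{-\pi i/12}\mathcal{N}(z/24)$. Since $\nu_\eta(T)^{-1} = e^{-\pi i/12}$ and $(cz+d)^{3/2} = 1$ for $T$, the transformation under $T$ follows.

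The main obstacle is the $S$-transformation, because $S$ interchanges the cusps at $\infty$ and $0$ of $\Gamma_0(576)$ and hence cannot be obtained by rescaling an element of $\Gamma_0(576)$; it must be treated by hand. For this I would work with the integral representation \eqn{Nint}: after a Fricke-type change of variables $\tau \mapsto -1/(576\sigma z)$ in the integral defining $\mathcal{N}(-1/(24z))$, the classical identity $\eta(-1/w) = \sqrt{-iw}\,\eta(w)$ applied to the integrand $\eta(24\tau)$ produces precisely the factor $\nu_\eta(S)^{-1} z^{3/2}$. Because $\mathcal{M}$ is annihilated by the weight $\tfrac{3}{2}$ Maass Laplacian and its splitting $\mathcal{M} = \alpha(24\cdot) + \mathcal{N}$ is canonical, consistency then forces the holomorphic part $\alpha(-1/z)$ to transform by the same multiplier, which is exactly what Bringmann's mock-modular construction of $\alpha(24z)$ asserts after undoing the rescaling $z = 24\tau$. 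In effect, Theorem \thm{Atrans} is the full-level repackaging of Bringmann's result, with the main technical step being the change of variables in \eqn{Nint} and the careful tracking of the $\eta$-multiplier.
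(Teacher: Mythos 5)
Your $T$-step is fine and matches the paper: both you and the paper read off $\alpha(z+1)=e^{-\pi i/12}\alpha(z)$ from the $q^{n-1/24}$ expansion and check the completion integral directly, then identify $e^{-\pi i/12}$ with $\nu_\eta(T)^{-1}$. The problem is your $S$-step. You propose to compute the transformation of the nonholomorphic piece $\mathcal{N}$ alone, by a change of variables in \eqn{Nint} together with $\eta(-1/w)=\sqrt{-iw}\,\eta(w)$, and then to argue that harmonicity of $\mathcal{M}$ and the ``canonical'' splitting $\mathcal{M}=\alpha(24\cdot)+\mathcal{N}$ \emph{force} the holomorphic part to transform with the same multiplier. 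That inference does not work: every holomorphic function is annihilated by the weight-$\tfrac32$ hyperbolic Laplacian, so one could replace $\alpha(24z)$ by $\alpha(24z)+h(z)$ for any holomorphic $h$ without disturbing either the harmonicity of the sum or the transformation law of $\mathcal{N}$. Knowing how $\mathcal{N}$ transforms under $S$ therefore gives no information whatsoever about how $\alpha$ transforms. The entire content of the theorem at the cusp-swapping matrix $S$ is a nontrivial statement about the combinatorially defined coefficients $\mathbf{a}(n)=12\,\spt(n)+(24n-1)p(n)$, and it cannot be extracted from the completion term plus ``consistency.''

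What actually closes this gap --- and what the paper does --- is to quote Bringmann's Fricke-type transformation for the full completed form, namely Corollary 4.3 and Lemma 4.4 of \cite{Br08}, which give
$\mathcal{M}(-1/z)=\frac{-(-iz)^{3/2}}{48\sqrt{6}}\,\mathcal{M}(z/576)$,
together with the quasimodular law \eqn{E2trans} for $E_2$ to handle the $(24n-1)p(n)$ contribution $-E_2(z)/\eta(z)$ inside $\alpha$ (the anomalous $E_2$ term must be seen to cancel against a corresponding anomaly in Bringmann's rank-moment transformation). You mention Bringmann's construction only at the very end, as something your consistency argument is supposed to recover; as written the logic is circular, since her theorem is the input you need, not a corollary of the behaviour of $\mathcal{N}$. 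Your correct observation that $S$ cannot be reached by rescaling $\Gamma_0(576)$ and ``must be treated by hand'' is exactly why this external input is unavoidable. Repair the proof by replacing the harmonicity argument with an explicit appeal to \cite[Corollary 4.3 and Lemma 4.4]{Br08} and the $E_2$ bookkeeping; the rest of your outline (verify on the generators $S$, $T$ and conclude) then agrees with the paper.
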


\noindent
{\it Remark}.        
When defining $z^{3/2}$ we use the principal branch; i.e.
for $z = r e^{i\theta}$, $r>0$, $-\pi \le \theta < \pi$, we take
$z^{3/2} = r^{3/2} e^{3i\theta/2}$.
\begin{proof}
We note that
\beq
\sum_{n=0}^\infty (24n -1) p(n) q^{n - \tfrac{1}{24}} = - \frac{E_2(z)}{\eta(z)},
\mylabel{eq:dpid}
\eeq
where $E_2(z) = 1 - 24 \sum_{n=1}^\infty \sigma(n) q^n$ is a quasi-modular
form that satisfies
\beq
%%E_2\left(-\frac{1}{z}\right) = z^2 E_2(z)  - \frac{6iz}{\pi}.
E_2\left(\frac{az+b}{cz+d}\right) = (cz+d)^2 E_2(z)  - \frac{6iz}{\pi} c(cz+d).
\mylabel{eq:E2trans}
\eeq
%% See p.6
%% http://arxiv.org/PS_cache/math/pdf/0206/0206022v2.pdf
%% On modular forms arising from a differential equation of 
%% hypergeometric type by Kaneko and Koikoe
%% The Ramanujan Journal, 2003 - Springer
%%
Using \eqn{E2trans} and Corollary 4.3 and Lemma 4.4 in \cite{Br08},   
$$
\mathcal{M}\left(-\frac{1}{z}\right) = \frac{-(-iz)^{3/2}}{48\sqrt{6}} 
\mathcal{M}\left(\frac{z}{576}\right),
$$
and hence
$$
\mathcal{A}\left(-\frac{1}{z}\right) = -(-iz)^{3/2}  \mathcal{A}(z)
= e^{\pi i/4} z^{3/2}  \mathcal{A}(z).
$$
%%%NOTE:  z=r*exp(i*theta), 0 < theta < pi
%%% Then -i*z = r*exp(i*(theta-pi/2) and
%%% (-i*z)^(3/2) = r^(3/2)*exp(i*3*theta/2)*exp(-3*pi*i/4),
%%% -(-i*z)^(3/2) =  r^(3/2)*exp(i*3*theta/2)*exp(pi*i/4)
%%   = exp(pi*i/4)*z^(3/2).
Therefore,
$$
\mathcal{A}(Sz) = \frac{z^{3/2}}{\nu_\eta(S)} \mathcal{A}(z),
$$
where $S = \begin{pmatrix} 0 & -1 \\ 1 & 0 \end{pmatrix}$.
From \eqn{alphadef}, \eqn{Nint} and \eqn{Mdef2}
\begin{align*}
\mathcal{M}(z + \tfrac{1}{24}) &= e^{-\pi i/12} \mathcal{M}(z),\\
\mathcal{N}(z + \tfrac{1}{24}) &= e^{-\pi i/12} \mathcal{N}(z),\\
\mathcal{A}(z + 1) &= e^{-\pi i/12} \mathcal{A}(z),\\
\mathcal{A}(Tz) &= \frac{1}{\nu_\eta(T)} \mathcal{A}(z),
\end{align*}
where $T = \begin{pmatrix} 1 & 1 \\ 0 & 1 \end{pmatrix}$.
Since $S$, $T$ generate $\SL_2(\Z)$ the result follows.
\end{proof}

In what follows $\ell \ge 5$ is prime. We let $d_\ell$ denote the
least nonnegative residue of the reciprocal of $24$ mod $\ell$ so
that $24d_\ell\equiv1\pmod{\ell}$. We define
\beq
r_\ell := \frac{24 d_\ell -1}{\ell},\qquad 
r_\ell^* := \frac{24 d_\ell + \ell^2 -1}{24\ell}, \qquad
s_\ell := \frac{(\ell^2-1)}{24}.
\mylabel{eq:rsdefs}
\eeq
so that
\beq
\alpha_\ell(z) := 
\sum_{n=-r_\ell^*} 
\left(  \mathbf{a}(\ell n + d_\ell) - \chi_{12}(\ell) \, \ell \, \mathbf{a}\left ( \frac{n+r_\ell^*}{\ell}\right)\right)
q^{n+ \tfrac{r_\ell}{24}} 
=
\sum_{n=0}^\infty
\left(  \mathbf{a}(\ell n - s_\ell) - \chi_{12}(\ell) \, \ell \, \mathbf{a}\left ( \frac{n}{\ell}\right)\right)
q^{n- \tfrac{\ell}{24}}.
\mylabel{eq:alphaelldefB}
\eeq
%%NOTE: These are equivalent because
%%      -ell*r*[ell] + d[ell] = -s[ell] and
%%      -r*[ell] + r[ell]/24 = -ell/24.
For a function $G(z)$ we define the Atkin-type operator 
$U_\ell^*$ by
\beq
U_\ell^*(G) := \frac{1}{\ell} \sum_{k=0}^{\ell-1} G\left(\frac{z+24k}{\ell}\right),
\mylabel{eq:Uellsdef}
\eeq
so that
$$
\alpha_\ell(z) = U_\ell^*(\alpha) - \chi_{12}(\ell) \, \ell \, \alpha(\ell z).
$$
The usual Atkin operator 
$U_\ell$ is defined by
\beq
U_\ell(G) := \frac{1}{\ell} \sum_{k=0}^{\ell-1} G\left(\frac{z+k}{\ell}\right).
\mylabel{eq:Uelldef}
\eeq
We need $U_\ell^*$ since $\alpha(z)$ has fractional powers of $q$, and we note that
$$
U_\ell^*(G) = U_\ell(G^*) (z/24),
$$
where $G^*(z) = G(24z)$. For a congruence subgroup $\Gamma$
we let $M_k(\Gamma)$ denote the space of entire modular
forms of weight $k$ with respect to the group $\Gamma$, 
and we let $M_k(\Gamma,\chi)$ denote the space of entire modular
forms of weight $k$ and character $\chi$ with respect to the group $\Gamma$.
Then

\begin{theorem}
\mylabel{thm:Gell}
If $\ell \ge5$ is prime, then
\beq
G_\ell(z) := \alpha_\ell(z) \, \frac{ \eta^{2\ell}(z)}{\eta(\ell z)} 
\in M_{\ell+1}(\Gamma_0(\ell)).
\mylabel{eq:Gelldef}
\eeq
In other words, the function $G_\ell(z)$ is an entire modular form of weight $\ell+1$
with respect to the group $\Gamma_0(\ell)$.
\end{theorem}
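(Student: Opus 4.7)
The plan is to exploit the identity
\[
\alpha_\ell(z) \;=\; U_\ell^*(\mathcal{A})(z) \;-\; \chi_{12}(\ell)\,\ell\,\mathcal{A}(\ell z),
\]
where $\mathcal{A}(z) = \mathcal{M}(z/24)$ is the weight-$3/2$ harmonic Maass form of Theorem \thm{Atrans}. From this identity, the $\SL_2(\Z)$-modularity of $\mathcal{A}$ will be transferred through $U_\ell^*$ to yield $\Gamma_0(\ell)$-modularity of $\alpha_\ell$, after which multiplication by $\eta^{2\ell}(z)/\eta(\ell z)$ will shift the weight to $\ell+1$ and rectify the multiplier to the trivial one.

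First I would show that the non-holomorphic part $\mathcal{N}(\cdot/24)$ of $\mathcal{A}$ is annihilated by $G \mapsto U_\ell^*(G) - \chi_{12}(\ell)\,\ell\,G(\ell\,\cdot)$. Substituting the Jacobi expansion $\eta(24\tau) = \sum_{n\ge 1}\chi_{12}(n) e^{2\pi i n^2 \tau}$ into the integral representation of $\mathcal{N}$ and exchanging sum and integral, the operator $U_\ell^*$ introduces the factor $\tfrac{1}{\ell}\sum_{k=0}^{\ell-1} e^{2\pi i n^2 k/\ell}$, which vanishes unless $\ell \mid n$ (using $\ell\ge 5$ so $24$ is invertible mod $\ell$). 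Setting $n = \ell m$ in the surviving terms, invoking $\chi_{12}(\ell m) = \chi_{12}(\ell)\chi_{12}(m)$, and rescaling the integration variable by $\ell$ recovers precisely $\chi_{12}(\ell)\,\ell\,\mathcal{N}(\ell z/24)$, establishing the desired identity for $\alpha_\ell$.

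For the modularity of $\alpha_\ell$ on $\Gamma_0(\ell)$, for $A = \begin{pmatrix} a & b \\ c\ell & d \end{pmatrix}\in\Gamma_0(\ell)$ I would handle $\mathcal{A}(\ell A z)$ by writing $\ell Az = A'(\ell z)$ with $A' = \begin{pmatrix} a & b\ell \\ c & d\end{pmatrix}\in\SL_2(\Z)$ and applying Theorem \thm{Atrans}. For $U_\ell^*(\mathcal{A})(Az)$, I would decompose $A\cdot\begin{pmatrix}1 & 24k\\ 0 & \ell\end{pmatrix} = \Gamma_k\cdot\begin{pmatrix}1 & 24\sigma(k)\\ 0 & \ell\end{pmatrix}$ with $\Gamma_k\in\SL_2(\Z)$ and $\sigma$ a permutation of $\{0,\dots,\ell-1\}$ (solving for $\sigma(k)$ modulo $\ell$ again requires $24$ invertible mod $\ell$); applying Theorem \thm{Atrans} to each $\Gamma_k$ and summing yields a weight-$3/2$ transformation law for $\alpha_\ell$ with an explicit multiplier. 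Multiplying by $\eta^{2\ell}(z)/\eta(\ell z)$ then raises the weight to $\ell+1$, and the combined multiplier (coming from the $\Gamma_k$, from $A'$, and from $\nu_\eta^{2\ell}$ contributed by $\eta^{2\ell}$) must be shown to collapse to the trivial character on $\Gamma_0(\ell)$, using the standard closed form for the eta-multiplier.

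Finally, holomorphy of $G_\ell$ at both cusps of $\Gamma_0(\ell)$ has to be verified. At $\infty$, the leading $q$-exponent of $\alpha_\ell$ is $-\ell/24$ while $\eta^{2\ell}(z)/\eta(\ell z)$ starts at $q^{\ell/24}$, so $G_\ell$ is visibly holomorphic at $\infty$. At the cusp $0$, I would transform by $S$, use the $S$-transformation of $\mathcal{A}$ from Theorem \thm{Atrans} to expand $\alpha_\ell$ there, and compare with the order of $\eta^{2\ell}/\eta(\ell z)$ at $0$. The main obstacle I expect is the bookkeeping of the $k$-dependent multipliers $\nu_\eta(\Gamma_k)$: because $\nu_\eta$ is a genuinely $24$-valued character involving Dedekind sums, one must show that the interaction of these $\nu_\eta(\Gamma_k)$ with $\nu_\eta^{2\ell}$ and $\nu_\eta(A')^{-1}$ telescopes to give a single, trivial character on $\Gamma_0(\ell)$. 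This multiplier calculation, while routine in principle, is the technical heart of the argument.
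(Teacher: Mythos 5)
Your parts (i)--(iii) track the paper's proof closely: the annihilation of the nonholomorphic part via the theta expansion of $\eta(24\tau)$ and the exponential sum is exactly the paper's Part (i) (phrased there as $U_\ell(\eta(24z))=\chi_{12}(\ell)\eta(24z)$, hence $U_\ell(\mathcal{N})=\ell\chi_{12}(\ell)\mathcal{N}$), and your coset decomposition $A B_k = A_k^* B_{k^*}$ is the paper's Part (ii). One remark on Part (ii): the multiplier bookkeeping you identify as ``the technical heart'' is precisely what the paper engineers away. Rather than tracking $\nu_\eta(\Gamma_k)$ for a bare weight-$3/2$ object, it multiplies through by $\eta(\ell z)$ and works with $F_\ell(z)=\mathcal{A}(z)\eta(\ell^2 z)=\bigl(\mathcal{A}(z)\eta(z)\bigr)\cdot\eta(\ell^2 z)/\eta(z)$, which by Theorem \thm{Atrans} is a weight-$2$ object with \emph{trivial} multiplier on $\Gamma_0(\ell^2)$; the coset argument then needs no eta-multiplier evaluations at all, and the remaining factor $\bigl(\eta^\ell(z)/\eta(\ell z)\bigr)^2\in M_{\ell-1}(\Gamma_0(\ell))$ supplies the rest of the weight. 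Your route is workable but substantially more painful than necessary.

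The genuine gap is at the cusp $0$, which you dispatch in one sentence but which is in fact the hardest step. When you transform $U_\ell^*(\mathcal{A})-\chi_{12}(\ell)\ell\,\mathcal{A}(\ell z)$ by $S_\ell=\begin{pmatrix}0&-1\\ \ell&0\end{pmatrix}$, the $k=0$ summand produces $\mathcal{A}(-1/(\ell^2 z))\sim z^{3/2}\mathcal{A}(\ell^2 z)$, whose holomorphic part has a pole of order $\ell^2/24$ in $q$; moreover the nonholomorphic parts of the resulting combination $\mathcal{A}(\ell^2z)+c\sum_k\leg{-24k}{\ell}\mathcal{A}(z+\tfrac{24k}{\ell})-\chi_{12}(\ell)\mathcal{A}(z)$ do not termwise vanish, so one cannot simply read off a $q$-expansion at $0$ and ``compare orders'' without first arguing why the nonholomorphic pieces contribute nothing to the principal part there. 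The paper resolves this by recognizing the combination as $\bigl(\mathcal{M}\vert T(\ell^2)-\chi_{12}(\ell)(1+\ell)\mathcal{M}\bigr)-\bigl(U_{\ell^2}(\mathcal{M})-\ell\chi_{12}(\ell)\mathcal{M}\bigr)$ and importing Ono's theorem that $\eta(z)^{\ell^2}\,\mathcal{M}_\ell(z/24)$ is an \emph{entire} form on $\SL_2(\Z)$, which simultaneously certifies holomorphy and pins down the order at the cusp; the explicit expansion \eqn{Gelltrans} then shows the order at $0$ is $s_\ell=(\ell^2-1)/24>0$. Your outline could in principle be completed without the Hecke operator (the nonholomorphic parts decay as $y\to\infty$, so the principal part at $0$ is governed by the holomorphic parts alone, and $-\ell^2/24+(2\ell^2-1)/24\ge 0$), but none of this appears in your proposal, and as written the claim of holomorphy at $0$ is unsupported.
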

\begin{proof}
We assume $\ell\ge5$ is prime.
We divide the proof into four parts:
\begin{enumerate}
\item[(i)]
$U_\ell^*(\mathcal{A}) - \ell \, \chi_{12}(\ell) \, \mathcal{A}(\ell z)
=\alpha_\ell(z)$ and
$G_\ell(z)$ is holomorphic for $\Im(z)>0$.
\item[(ii)]
$G_\ell(Az) = (cz+d)^{\ell + 1} G_\ell(z)$ for all 
$A= \begin{pmatrix} a & b \\ c & d \end{pmatrix} \in \Gamma_0(\ell)$.
\item[(iii)]
$G_\ell(z)$ is holomorphic at $i\infty$.
\item[(iv)]
$G_\ell(z)$ is holomorphic at the cusp $0$.
\end{enumerate}

\noindent 
{Part (i)}. 
It is well-known (and an easy exercise) to show that
\beq
U_\ell(\eta(24z)) = \chi_{12}(\ell) \, \eta(24z).
\mylabel{eq:Uetatrans}
\eeq
Using \eqn{Nint} and \eqn{Uetatrans} we easily find that
$$
U_\ell(\mathcal{N}(z)) = \ell \, \chi_{12}(\ell) \mathcal{N}(z).
$$
It follows that
$$
U_\ell(\mathcal{M}) - \ell \, \chi_{12}(\ell) \, \mathcal{M}(\ell z)
$$
is holomorphic for $\Im(z)>0$.  By replacing $z$ by $\frac{z}{24}$ we see that
$$
U_\ell^*(\mathcal{A}) - \ell \, \chi_{12}(\ell) \, \mathcal{A}(\ell z)
= U_\ell^*(\alpha) - \ell \, \chi_{12}(\ell) \, \alpha(\ell z) = \alpha_\ell(z)
$$
and it is clear that $G_\ell(z)$ 
is holomorphic for $\Im(z)>0$.

%%\noindent 
%%{Part (ii)}. 
%%We have
%%$$
%%G_\ell(z) = \alpha_\ell(z) \cdot \frac{ \eta^{2\ell}(z)}{\eta(\ell z)} 
%%= 
%%\sum_{n=-r_\ell^*} 
%%\left(  \mathbf{a}(\ell n + d_\ell) - \chi_{12}(\ell) \, \ell \, \mathbf{a}\left ( \frac{n+r_\ell^*}{\ell}\right)\right)
%%q^{n+ r_\ell*} \cdot \frac{E(q)^{2\ell}}{E(q^\ell)}
%%$$
%%where
%%$$
%%E(q) = \prod_{n=1}^\infty (1-q^n).
%%$$
%%$G_\ell(z)$ is holomorphic at $i\infty$ since $r_\ell^*$ is a positive integer.
%%

\vskip 20pt

\noindent 
{Part (ii)}. 
Now let 
$A= \begin{pmatrix} a & b \\ c & d \end{pmatrix} \in \Gamma_0(\ell)$.
We must show that
$$
G_\ell(Az) = (cz+d)^{\ell + 1} G_\ell(z).
$$          
Since it is well-known that
$$
\left( \frac{\eta^\ell(z)}{\eta(\ell z)}\right)^2 
\in M_{\ell-1}(\Gamma_0(\ell)),
$$
it suffices to show that
$$
\alpha_\ell(Az) \, \eta(\ell Az) = (cz+d)^2 \alpha_\ell(z) \eta(\ell z).
$$
We need to show that

\begin{align}            
f_\ell(Az) &= (cz + d)^2 f_\ell(z), \mylabel{eq:felltrans}\\
g_\ell(Az) &= (cz + d)^2 g_\ell(z),  \mylabel{eq:gelltrans}
\end{align}            
where
$$
f_\ell(z) = U_\ell^*(\mathcal{A}) \, \eta(\ell z),\qquad 
g_\ell(z) = \mathcal{A}(\ell z) \, \eta(\ell z).               
$$
Let
$$
A^* =
\begin{pmatrix} a & \ell b \\ c/\ell  & d \end{pmatrix}.                                     
$$
Then $A^*\in\SL_2(\Z)$ and \eqn{gelltrans} follows from Theorem \thm{Atrans}
and the fact that
$$
\mathcal{A}(\ell Az) \, \eta(\ell Az) =              
\mathcal{A}(A^*z) \, \eta(A^* z).               
$$
Now,
$$
f_\ell(z) = U_\ell^*(\mathcal{A}) \eta(\ell z) = U_\ell^*(\mathcal{A}(z) \eta(\ell^2 z)).
$$
We define
\beq
F_\ell(z) := \mathcal{A}(z) \eta(\ell^2 z) 
= \mathcal{A}(z) \eta(z) \,    
\frac{\eta(\ell^2 z)}
{\eta(z)}.
\mylabel{eq:Felldef}
\eeq
Using Theorem \thm{Atrans} and the fact that $\frac{\eta(\ell^2 z)}{\eta(z)}$
is a modular function on $\Gamma_0(\ell^2)$ we have 
$$
F_\ell(Cz) = (c_1 z + d_1)^2 F_\ell(z),
$$
for  $C=
\begin{pmatrix}
a_1 & b_1 \\
c_1 & d_1
\end{pmatrix} \in \Gamma_0(\ell^2)$.
%%
%%\vskip 20pt
%%
%%\noindent 
%%{Part (ii)}. 
%%We have
%%$$
%%G_\ell(z) = \alpha_\ell(z) \cdot \frac{ \eta^{2\ell}(z)}{\eta(\ell z)} 
%%= 
%%\sum_{n=-r_\ell^*} 
%%\left(  \mathbf{a}(\ell n + d_\ell) - \chi_{12}(\ell) \, \ell \, \mathbf{a}\left ( \frac{n+r_\ell^*}{\ell}\right)\right)
%%q^{n+ r_\ell*} \cdot \frac{E(q)^{2\ell}}{E(q^\ell)}
%%$$
%%where
%%$$
%%E(q) = \prod_{n=1}^\infty (1-q^n).
%%$$
%%$G_\ell(z)$ is holomorphic at $i\infty$ since $r_\ell^*$ is a positive integer.

Now for $0 \le k \le\ell-1$, let
$$
B_k = \begin{pmatrix} 1 & 24k \\ 0 & \ell \end{pmatrix}
$$
so that
$$
f_\ell(z) = U_\ell^*(F_\ell(z)) = \frac{1}{\ell} \sum_{k=0}^{\ell-1} F_\ell(B_kz).
$$
Since $A\in\Gamma_0(\ell)$, $(a,\ell)=1$ and we can
choose unique $0 \le k^* \le \ell-1$ such that
$$
24ak^* \equiv b + 24kd \pmod{\ell}.
$$
Then
$$
B_k A = A_k^* B_{k^*},
$$
where $A_k^*\in\Gamma_0(\ell^2)$. 
%%%%%%%%%%%%%%%%%%%%%%%%%%%%%%%%%%%%%%%%%%%%%%%%%%%%%%%%%%%%%%%%%%%%%%%%%%%%%
% > with(linalg):
% > A:=matrix(2,2,[[a,b],[c,d]]):
% > B:=k->matrix(2,2,[[1,24*k],[0,ell]]):
% > MAT:=evalm( B(k) &* A &* inverse(B(ks))):
% > op(MAT);
%                                24 (a + 24 k c) ks   b + 24 k d
%                  a + 24 k c  - ------------------ + ----------
%                                       ell              ell    
%                  ell c        -24 c ks + d
%  
% > normal(MAT[1,2]);
%                         24 ks a + 576 ks k c - b - 24 k d
%                       - ---------------------------------
%                                        ell               
%  
%%%%%%%%%%%%%%%%%%%%%%%%%%%%%%%%%%%%%%%%%%%%%%%%%%%%%%%%%%%%%%%%%%%%%%%%%%%%%
We have
\beqs            
f_\ell(Az) =   \frac{1}{\ell} \sum_{k=0}^{\ell-1} F_\ell(B_k Az)
           =   \frac{1}{\ell} \sum_{k^*=0}^{\ell-1} F_\ell(A_k^* B_{k^*}z) 
           = \frac{ (cz+d)^2 }{\ell} \sum_{k^*=0}^{\ell-1} F_\ell( B_{k^*}z)
           = (cz+d)^2 f_\ell(z),
\eeqs            
which is \eqn{felltrans}.

\vskip 20pt

\noindent
{Part (iii)}.
First we note that $r_\ell^*$ is a positive integer.
We have
$$
G_\ell(z) = \alpha_\ell(z) \, \frac{ \eta^{2\ell}(z)}{\eta(\ell z)}
=
\sum_{n=-r_\ell^*}
\left(  \mathbf{a}(\ell n + d_\ell) - \chi_{12}(\ell) \, \ell \, \mathbf{a}\left ( \frac{n+r_\ell^*}{\ell}\right)
\right)
q^{n+ r_\ell*} \, \frac{E(q)^{2\ell}}{E(q^\ell)}
$$
where
$$
E(q) = \prod_{n=1}^\infty (1-q^n).
$$
We see that $G_\ell(z)$ is holomorphic at $i\infty$.

\vskip 20pt

\noindent 
{Part (iv)}. 
We need to find $G_\ell\left(\frac{-1}{\ell z}\right)$.
$$
U_\ell^*(\mathcal{A})  =   \frac{1}{\ell} \sum_{k=0}^{\ell -1} 
\mathcal{A}\left(\frac{z + 24k}{\ell}\right) 
=   \frac{1}{\ell}  \mathcal{A}\left(\frac{z}{\ell}\right) 
    +  
   \frac{1}{\ell} \sum_{k=1}^{\ell -1} 
\mathcal{A}\left(\frac{z + 24k}{\ell}\right)
=   \frac{1}{\ell}  \mathcal{A}\left(\frac{z}{\ell}\right) 
    +  
   \frac{1}{\ell} \sum_{k=1}^{\ell -1} 
\mathcal{A}(B_k z).                                
$$
For each $1 \le k \le \ell-1$ choose $1 \le k^* \le \ell-1$ such that 
$576 k k^* \equiv -1 \pmod{\ell}$.
Then
$$
B_k S = C_k B_{k^*},
$$
where
$$
C_k = \begin{pmatrix} 24k & \frac{-1-576kk^*}{\ell} \\ \ell & -24k^* \end{pmatrix}
\in \Gamma_0(\ell).
$$
Then
$$
\mathcal{A}(B_k S z) = \mathcal{A}(C_k B_{k^*}z) = z^{3/2} 
\leg{-24k^*}{\ell} e^{\pi i \ell/4} \mathcal{A}(B_{k^*}z),
$$
by Theorem \thm{Atrans} since
$$
\nu_\eta(C_k) = \leg{-24k^*}{\ell} e^{-\pi i \ell/4},
$$
by \cite[p.51]{Kn-book}. Define
$$
S_\ell = \begin{pmatrix} 0 & -1 \\ \ell & 0 \end{pmatrix}.
$$
By Theorem \thm{Atrans},
\begin{align*}
\mathcal{A}\left(\frac{1}{\ell} S_\ell z\right) &= e^{\pi i/4} (z \ell^2)^{3/2} 
\mathcal{A}(\ell^2 z),\\
\mathcal{A}\left(\ell S_\ell z\right) &= e^{\pi i/4} z^{3/2} 
\mathcal{A}(z).
\end{align*}.

Hence, if we define
\beq
H_\ell(z) := U_\ell^*(\mathcal{A}) - \ell \chi_{12} \mathcal{A}(\ell z),
\mylabel{eq:Helldef}
\eeq
then
$$
H_\ell(S_\ell z) = \ell z^{3/2} e^{\pi i/4}\left(
\mathcal{A}(\ell^2 z) + \frac{1}{\sqrt{\ell}} e^{\pi i(\ell -1)/4} \sum_{k=1}^{\ell -1}
\leg{-24k}{\ell}\mathcal{A}\left(z + \tfrac{24k}{\ell}\right) 
- \chi_{12}(\ell) \mathcal{A}(z) \right).
$$
Replacing $z$ by $24z$ gives
$$
H_\ell(S_\ell 24z) = \ell (24z)^{3/2} e^{\pi i/4}\left(
\mathcal{M}(\ell^2 z) + \frac{1}{\sqrt{\ell}} \chi_{12}(\ell) \epsilon_{\ell}^3 
\sum_{k=1}^{\ell -1}
\leg{-k}{\ell}\mathcal{M}\left(z + \tfrac{k}{\ell}\right) 
- \chi_{12}(\ell) \mathcal{M}(z) \right),
$$
since
$$
 e^{\pi i(\ell -1)/4} \leg{24}{\ell} = \chi_{12}(\ell) \epsilon_\ell^3.
$$
Here 
$$
\epsilon_{\ell} =
\begin{cases}
1 & \mbox{if $\ell\equiv1\pmod{4}$,}\\
i & \mbox{if $\ell\equiv3\pmod{4}$.}
\end{cases}
$$
By \cite[p.451]{Sh} we have
\begin{align*}
H_\ell(S_\ell 24z) 
&= \ell (24z)^{3/2} e^{\pi i/4}\left(
\mathcal{M} \vert T(\ell^2) - \chi_{12}(\ell) \mathcal{M}(z) - U_{\ell^2}(\mathcal{M})
\right),\\
&= \ell (24z)^{3/2} e^{\pi i/4}\left(
(\mathcal{M} \vert T(\ell^2) - \chi_{12}(\ell)(1 +\ell)\mathcal{M}(z)) - 
(U_{\ell^2}(\mathcal{M}) - \ell \chi_{12}(\ell) \mathcal{M}(z))
\right),
\end{align*}
where $T(\ell^2)$ is the Hecke operator which acts on harmonic Maass forms of
weight $\tfrac{3}{2}$, 
and was used by Ono \cite{On10}. When the form is meromorphic
it corresponds to the usual Hecke operator as described by Shimura \cite{Sh}.
Ono \cite{On10} showed that function
$$
\mathcal{M}_\ell(z) = \mathcal{M} \vert T(\ell^2) - \chi_{12}(\ell)(1 +\ell)\mathcal{M}(z))
$$
is a weakly holomorphic modular form. In fact, he showed that 
\beq
\mathcal{F}_\ell(z) := \eta(z)^{\ell^2} \mathcal{M}_\ell(z/24)
\mylabel{eq:Facdef}
\eeq
is a weight $(l^2+3)/2$ entire modular form on $\SL_2(\Z)$. See 
\cite[Theorem 2.2]{On10}. We also note that the function
$$
U_{\ell^2}(\mathcal{M}) - \ell \chi_{12}(\ell) \mathcal{M}(z)
= U_\ell\left( U_{\ell}(\mathcal{M}) - \ell \chi_{12}(\ell) \mathcal{M}(\ell z)\right)
$$
is holomorphic for $\Im(z)>0$ by the remarks in Part (i).
Thus we find that
\beq
G_\ell\left(\frac{-1}{\ell z}\right)   
= -(iz\ell)^{\ell + 1} \frac{E(q^\ell)^{2\ell}}{E(q)}
\left(
\sum_{n=-s_\ell}^\infty 
 \left( 
         \chi_{12}(\ell)  \mathbf{a}(n) \left(\leg{1-24n}{\ell}-1\right)
         + \ell \mathbf{a}\left ( \frac{n+s_\ell}{\ell^2}\right) \right)
                             q^{n + 2s_\ell}
\right),
\mylabel{eq:Gelltrans}
\eeq
where $s_\ell = \frac{\ell^2-1}{24}$.
It follows that $G_\ell(z)$ is holomorphic at the cusp $0$.
\end{proof}

Since $G_\ell(z)\in M_{\ell + 1}(\Gamma_0(\ell))$, the
function 
$z^{-\ell-1}  G_\ell\left(\frac{-1}{\ell z}\right)
\in M_{\ell + 1}(\Gamma_0(\ell))$  by \cite[Lemma 1]{At-Le}.
Thus if we define
\beq
\beta_\ell(z) :=
\sum_{n=-s_\ell}^\infty 
 \left( 
         \chi_{12}(\ell)  \mathbf{a}(n) \left(\leg{1-24n}{\ell}-1\right)
         + \ell \mathbf{a}\left ( \frac{n+s_\ell}{\ell^2}\right) \right)
                             q^{n  - \tfrac{1}{24}},
\mylabel{eq:betadef}
\eeq
then the proof of Part (iv) of  Theorem \thm{Gell} yields 
\begin{corollary}
\mylabel{cor:Jell}
If $\ell \ge5$ is prime, then
\beq
J_\ell(z) := \beta_\ell(z) \, \frac{ \eta^{2\ell}(\ell z)}{\eta(z)} \in M_{\ell+1}(\ell).
\mylabel{eq:Jelldef}
\eeq
\end{corollary}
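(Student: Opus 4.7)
The plan is to read off $J_\ell$ directly from the Atkin--Lehner image of $G_\ell$ at the cusp $0$, using the explicit formula \eqref{eq:Gelltrans} already established in Part (iv) of Theorem \ref{thm:Gell}. Since $G_\ell\in M_{\ell+1}(\Gamma_0(\ell))$, Lemma~1 of Atkin--Lehner gives that the function $z^{-\ell-1}\,G_\ell(-1/(\ell z))$ again lies in $M_{\ell+1}(\Gamma_0(\ell))$, so it suffices to show that up to a nonzero constant this Atkin--Lehner image is exactly $J_\ell(z)$.

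The first step is bookkeeping on the $q$-expansion. Comparing \eqref{eq:Gelltrans} with the definition \eqref{eq:betadef} of $\beta_\ell$, the sum appearing in \eqref{eq:Gelltrans} is precisely $q^{2s_\ell+\frac{1}{24}}\,\beta_\ell(z)$, where $s_\ell=(\ell^2-1)/24$. The second step is to identify the eta quotient: since $\eta(z)=q^{1/24}E(q)$ and $\eta(\ell z)=q^{\ell/24}E(q^\ell)$, one checks that
\[
\frac{\eta(\ell z)^{2\ell}}{\eta(z)} \;=\; q^{\frac{2\ell^{2}-1}{24}}\,\frac{E(q^{\ell})^{2\ell}}{E(q)},
\]
and the exponent $\tfrac{2\ell^2-1}{24}$ matches $2s_\ell+\tfrac{1}{24}$ exactly. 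Combining these two observations, \eqref{eq:Gelltrans} becomes
\[
G_\ell\!\left(\frac{-1}{\ell z}\right) \;=\; -(iz\ell)^{\ell+1}\,\beta_\ell(z)\,\frac{\eta(\ell z)^{2\ell}}{\eta(z)} \;=\; -(iz\ell)^{\ell+1}\,J_\ell(z).
\]

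From this identity, $J_\ell(z)$ is a nonzero scalar multiple of $z^{-\ell-1}\,G_\ell(-1/(\ell z))$, and the Atkin--Lehner lemma finishes the argument: $J_\ell\in M_{\ell+1}(\Gamma_0(\ell))$. The only non-routine work is keeping track of the $q$-power shift between the sum in \eqref{eq:Gelltrans} (indexed by $q^{n+2s_\ell}$) and the sum in \eqref{eq:betadef} (indexed by $q^{n-1/24}$), which is exactly absorbed by $\eta(\ell z)^{2\ell}/\eta(z)$; I expect this bookkeeping to be the only subtle point, since the heavy lifting (holomorphy and modularity on $\Gamma_0(\ell)$) is inherited from Theorem \ref{thm:Gell} and the Atkin--Lehner involution. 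Holomorphy of $J_\ell$ at both cusps then follows from the same property for $G_\ell$, transported by $W_\ell$.
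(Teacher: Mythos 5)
Your proposal is correct and is essentially the paper's own argument: the paper likewise deduces the corollary by applying Lemma 1 of Atkin--Lehner to conclude $z^{-\ell-1}G_\ell\left(\frac{-1}{\ell z}\right)\in M_{\ell+1}(\Gamma_0(\ell))$ and then identifying this function, via the expansion \eqn{Gelltrans} from Part (iv) of Theorem \thm{Gell}, as a nonzero constant multiple of $\beta_\ell(z)\,\eta^{2\ell}(\ell z)/\eta(z)$. Your explicit check that the $q$-power shift $q^{2s_\ell+\frac{1}{24}}$ is exactly absorbed by the eta quotient is the same bookkeeping the paper leaves implicit.
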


We illustrate the case $\ell=5$. For $\ell$ prime we define
\beq
\mathcal{E}_{2,\ell}(z) 
:= \frac{1}{\ell -1}\left( \ell E_2(\ell z) - E_2(z) \right).
\mylabel{eq:E2elldef}
\eeq
It is well-known that $\mathcal{E}_{2,\ell}(z)\in M_{2}(\Gamma_0(\ell))$.
By \cite[Theorem 3.8]{Ki-book} $\dim  M_{6}(\Gamma_0(5)) = 3$, and it can be 
shown that
$$
\{
\mathcal{E}_{2,5}(z) \frac{\eta(5z)^{10}}{\eta(z)^2},
\mathcal{E}_{2,5}(z) \eta(5z)^{4} \eta(z)^4,
\mathcal{E}_{2,5}(z) \frac{\eta(z)^{10}}{\eta(5z)^2}
\}
$$
is a basis. We find that
$$
G_5(z) = 5 \, \mathcal{E}_{2,5}(z) \left(125 \, \eta(5z)^{4} \eta(z)^4
-   \frac{\eta(z)^{10}}{\eta(5z)^2}\right),
$$
and
$$
J_5(z) = 5 \, \mathcal{E}_{2,5}(z) \left( \frac{\eta(5z)^{10}}{\eta(z)^2}
-  \eta(5z)^{4} \eta(z)^4\right).
$$
Thus
\beq
\sum_{n=0}^\infty
\left(  \mathbf{a}(5 n - 1)  + 5\, \mathbf{a}\left ( \frac{n}{5}\right)\right)
q^{n- \tfrac{5}{24}} 
=
5 \, \frac{ \mathcal{E}_{2,5}(z) }{\eta(5z)}\left( 
125\,\frac{\eta(5z)^{6}}{\eta(z)^6} - 1\right),
\mylabel{eq:a5n1id}
\eeq
and
\beq
\sum_{n=-1}^\infty 
 \left( 
         -  \mathbf{a}(n) \left(\leg{1-24n}{5}-1\right)
         + 5 \, \mathbf{a}\left ( \frac{n+1}{25}\right) \right)
                             q^{n  - \tfrac{1}{24}} 
=
5 \, \frac{ \mathcal{E}_{2,5}(z) }{\eta(z)}\left( 
1 - \frac{\eta(z)^{6}}{\eta(5z)^6}\right).
\mylabel{eq:a5twistid}
\eeq

%section 3%%%%%%%%%%%%%%%%%%%%%%%%%%%%%%%%%%%%%%%%%%%%%%%%%%%%%%%%%%%%%%%%%%%
\section{The Congruences} \mylabel{sec:congs}

In this section we derive explicit formulas for the generating functions
of
\beq
\mathbf{a}(\ell^{a} n  + d_{\ell,a} )  - \chi_{12}(\ell)\, \ell \,
\mathbf{a}(\ell^{a-2} n + d_{\ell,a-2}),    
\mylabel{eq:genell}
\eeq
when $\ell=5$, $7$, and $13$. Here $24 d_{\ell,a} \equiv 1 \pmod{\ell^a}$.
The presentation of the identities is analogous to those of the
partition function as given by Hirschhorn and Hunt \cite{Hi-Hu}
and the author \cite{Ga84}. In each case we start 
by using Theorem \thm{Gell} to find identities for $\alpha_{\ell}(z)$.
This basically gives the initial case $a=1$. Then we use 
Watson's \cite{Wa38} and Atkin's \cite{At68} method of
modular equations to do the induction step and study the arithmetic
properties of the coefficients in these identities. The main congruences
\eqn{spt5acong}\--\eqn{spt13ccong} then follow in a straightforward way.

\subsection{The SPT-function modulo powers of $5$} \mylabel{subsec:5}

\begin{theorem}
\mylabel{thm:mainthm5}
If $a\ge1$ then
\begin{align}
\sum_{n=0}^\infty
\left(  \mathbf{a}(5^{2a-1} n  - t_a )  + 5\,  \mathbf{a}(5^{2a-3} n - t_{a-1})\right)
q^{n- \tfrac{5}{24}} 
&=
\frac{ \mathcal{E}_{2,5}(z) }{\eta(5z)} \sum_{i\ge0} x_{2a-1,i} Y^i, 
\mylabel{eq:gen5odd}\\
\sum_{n=0}^\infty
\left(  \mathbf{a}(5^{2a} n  - t_a )  + 5\,  \mathbf{a}(5^{2a-2} n - t_{a-1})\right)
q^{n- \tfrac{1}{24}} 
&=
\frac{ \mathcal{E}_{2,5}(z) }{\eta(z)} \sum_{i\ge0} x_{2a,i} Y^i, 
\mylabel{eq:gen5even}
\end{align}
where
$$
t_a=\frac{1}{24}(5^{2a}-1),\qquad Y(z) = \frac{\eta(5z)^6}{\eta(z)^6},
$$
$$
\vec{x}_1 = (x_{1,0},x_{1,1},\cdots) = (-5, 5^4, 0, 0, 0, \cdots),
$$
and for $a\ge1$
\beq
\vec{x}_{a+1} =
\begin{cases}
\vec{x}_a A, & \mbox{$a$ odd},\\
\vec{x}_a B, & \mbox{$a$ even}.   
\end{cases}
\mylabel{eq:xrec}
\eeq
Here  $A=(a_{i,j})_{i\ge 0, j\ge 0}$ and
      $B=(a_{i,j})_{i\ge 0, j\ge 0}$ are defined by
\beq
a_{i,j} = m_{6i,i+j},\qquad
b_{i,j} = m_{6i+1,i+j},
\mylabel{eq:abdefs}
\eeq
where the matrix $M=(m_{i,j})_{i,j\ge0}$ is defined as follows:
The first five rows of $M$ are
$$
\begin{pmatrix}
1     &     0     &     0     &     0     &     0     &     0 & \cdots\\
0     &     5^{3} &     0     &     0     &     0     &     0 & \cdots\\ 
0     & 4\mytimes5^{2} &     5^{5} &     0     &     0     &     0 & \cdots\\
0     & 9\mytimes5     & 9\mytimes5^{4} &     5^{7} &     0     &     0 & \cdots\\
0     & 2\mytimes5     &44\mytimes5^{3} &14\mytimes5^{6} &     5^{9} &     0 & \cdots 
\end{pmatrix}
$$
and for $i\ge5$, $m_{i,0}=0$ and for $j\ge 1$,
\beq
m_{i,j} = 25\,m_{i-1,j-1} + 25\,m_{i-2,j-1} + 15\,m_{i-3,j-1} + 5\,m_{i-4,j-1}
          + m_{i-5,j-1}.
\mylabel{eq:mrec5}
\eeq
\end{theorem}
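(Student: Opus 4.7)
The proof is by induction on $a$, with the odd statement \eqn{gen5odd} and the even statement \eqn{gen5even} interlocked by a single vector-valued recursion on the $\vec{x}_a$. The base case $a=1$ of \eqn{gen5odd} has $t_1 = 1$, $t_0 = 0$, and with $\vec{x}_1 = (-5, 5^4, 0, \ldots)$ the right-hand side becomes
$$
\frac{\mathcal{E}_{2,5}(z)}{\eta(5z)}(-5 + 5^4 Y) = 5\,\frac{\mathcal{E}_{2,5}(z)}{\eta(5z)}(125 Y - 1),
$$
which is exactly the identity \eqn{a5n1id} from Section \sect{atkin}.

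For the induction step, the key observation is that passing from level $a$ to level $a+1$ on the left-hand sides of \eqn{gen5odd}--\eqn{gen5even} amounts, up to a constant and an index shift, to applying the Atkin-type operator $U_5^*$ from \eqn{Uellsdef}. A direct computation shows that the substitution $n \mapsto 5n + c$, where $c$ is the unique residue mod $5$ satisfying $5^a c + t_a \equiv t_{a+1} \pmod{5^{a+1}}$, takes the progression appearing at level $a$ to the one at level $a+1$, and this substitution is precisely what $U_5^*$ implements on the generating functions with the relevant fractional $q$-exponents ($-5/24$ in the odd case, $-1/24$ in the even case).

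On the right-hand side, the induction therefore reduces to understanding how $U_5^*$ acts on the building blocks $\mathcal{E}_{2,5}(z)\,\eta(5z)^{-1} Y^i$ and $\mathcal{E}_{2,5}(z)\,\eta(z)^{-1} Y^i$. Using the modularity of $\mathcal{E}_{2,5}$ on $\Gamma_0(5)$, the identity \eqn{Uetatrans}, and the standard theory of the Atkin operator on modular functions for $\Gamma_0(5)$, each of these can be re-expressed in the opposite family: the operator $U_5^*$ swaps $\eta(5z)$ and $\eta(z)$ in the denominator and rewrites $Y^i$ as a polynomial in $Y$. The coefficients of these expansions are what the matrices $A$ and $B$ encode, and the particular row-shifts $a_{i,j} = m_{6i, i+j}$ and $b_{i,j} = m_{6i+1, i+j}$ come from the auxiliary $\eta$-factor that must be inserted to compensate for the weight-$\tfrac12$ mismatch between $\eta(z)$ and $\eta(5z)$ under the operator.

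The recurrence \eqn{mrec5} for the unified matrix $M$ is then an incarnation of the classical level-$5$ modular equation relating $Y(z)$ and $Y_1(z) := Y(z/5)$. Concretely, one establishes a polynomial identity of the shape
$$
Y^5 = Y_1\bigl(25\,Y^4 + 25\,Y^3 + 15\,Y^2 + 5\,Y + 1\bigr)
$$
(or an equivalent one after appropriate normalization), and substituting it into the expansion $Y^i = \sum_j m_{i,j}\,Y_1^j$ reproduces \eqn{mrec5} immediately. The first five rows of $M$ are pinned down by direct $q$-expansion computations for $i=0,1,2,3,4$, combined with dimension counts in the appropriate spaces of modular forms on $\Gamma_0(5)$ (compare the argument around \eqn{a5n1id}). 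The principal obstacle throughout is the careful bookkeeping for the interaction between $\eta$-multipliers, the action of $U_5^*$, and the cuspidal orders at both $0$ and $i\infty$: one must verify at each step that the $U_5^*$-image lies in the expected weight-$0$ modular function space with the correct pole structure, so that the ensuing expansions in $Y$ are unique. Once this is secured, the induction closes by a routine application of the Watson--Atkin method of modular equations.
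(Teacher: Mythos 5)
Your skeleton is the paper's: induction on $a$ with the two families interleaved, base case equal to \eqn{a5n1id}, the step from level $a$ to $a+1$ implemented by the Atkin operator (the paper applies the plain $U_5$ after clearing denominators by $\eta(5z)$ resp.\ $\eta(25z)$, which is the same thing as your $U_5^*$ on the fractional-exponent series), the matrices $A$, $B$ recording how $U_5$ acts on the basis elements, the first rows of $M$ pinned down by dimension counts in $M_{2n+2}(\Gamma_0(5))$ and $M_{2n+2}(\Gamma_0(5),\leg{\cdot}{5})$, and the recurrence \eqn{mrec5} coming from Watson's level-$5$ modular equation.

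There is, however, one genuine gap: you never introduce the auxiliary function $Z(z)=\eta(25z)/\eta(z)$, and the two places where you substitute something else for it do not work. First, the unified matrix $M$ is defined by $U_5(\mathcal{E}_{2,5}Z^i)=\mathcal{E}_{2,5}\sum_j m_{i,j}Y^j$, and Watson's modular equation is a relation between $Z(z)$ and $Y(5z)$, namely $Z^5=(25Z^4+25Z^3+15Z^2+5Z+1)\,Y(5z)$; your proposed identity $Y^5=Y_1(25Y^4+\cdots+1)$ relating $Y(z)$ to $Y(z/5)$ is not that equation and is not correct even ``after normalization'' --- replacing $z$ by $z/5$ in Watson's equation relates $Z(z/5)$ to $Y(z)$, not $Y$ to $Y_1$. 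The recurrence \eqn{mrec5} is obtained by applying $U_5$ to $\mathcal{E}_{2,5}Z^i = \mathcal{E}_{2,5}Z^{i-5}\cdot Z^5$ and using $U_5(f\cdot g(5z))=U_5(f)\,g(z)$, which requires the equation in $Z$. Second, the index shifts $a_{i,j}=m_{6i,i+j}$ and $b_{i,j}=m_{6i+1,i+j}$ do not come from any ``weight-$\tfrac12$ mismatch'' ($Y$ is weight $0$ and there is no mismatch to compensate); they come from the elementary identity $Y(z)^i=Z(z)^{6i}\,Y(5z)^{-i}$ (since $Z^6=Y(5z)Y(z)$), which lets you pull $Y(5z)^{-i}$ through $U_5$ as $Y(z)^{-i}$ and read off $U_5(\mathcal{E}_{2,5}Y^i)$, and likewise $ZY^i=Z^{6i+1}Y(5z)^{-i}$, from the rows of $M$. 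Without $Z$ these definitions, and the even-to-odd step (which forces you to multiply by $\eta(25z)$, i.e.\ to introduce a single factor of $Z$), cannot be justified. Everything else in your outline closes once $Z$ is put in its proper place.
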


\begin{lemma}
\mylabel{lem:5.2}
If $n$ is a positive integer then there are integers $c_m$ 
$(\lceil\frac{n}{5}\rceil \le m \le n)$
such that
$$
U_5(\mathcal{E}_{2,5} Z^n) 
= \mathcal{E}_{2,5} \sum_{m=\lceil\frac{n}{5}\rceil}^n c_m Y^m,
$$
where
\beq
Z(z) = \frac{\eta(25z)}{\eta(z)},\qquad Y(z) = \frac{\eta(5z)^6}{\eta(z)^6}.
\mylabel{eq:ZYdefs}
\eeq
\end{lemma}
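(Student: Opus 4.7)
The plan is to show that $U_5(\mathcal{E}_{2,5}Z^n)$ is a meromorphic weight-$2$ modular form on $\Gamma_0(5)$, and that its quotient by the unit $\mathcal{E}_{2,5}$ is a polynomial in $Y$ over $\Z$ with $Y$-valuation at least $\lceil n/5\rceil$ and $Y$-degree at most $n$. This suffices because $Y$ is a Hauptmodul for the genus-zero modular curve $X_0(5)$ with divisor $(\infty)-(0)$, so the function field is $\C(Y)$, and a modular function on $\Gamma_0(5)$ with a zero of order $\ge a$ at $\infty$ and a pole of order $\le b$ at $0$ is forced to be a polynomial $\sum_{m=a}^{b}c_m Y^m$.

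First I would record the standing facts. The form $\mathcal{E}_{2,5}\in M_2(\Gamma_0(5))\subset M_2(\Gamma_0(25))$ is nonzero on the upper half-plane and at both cusps of $\Gamma_0(5)$; the function $Z=\eta(25z)/\eta(z)$ is a modular function on $\Gamma_0(25)$, holomorphic on the upper half-plane, with $q$-expansion $q+O(q^2)$; and $Y$ is a Hauptmodul as above. Hence $\mathcal{E}_{2,5}Z^n$ is a meromorphic weight-$2$ form on $\Gamma_0(25)$, holomorphic on the upper half-plane. A standard coset computation with $U_5$ shows that $U_5(\mathcal{E}_{2,5}Z^n)$ lies in the space of meromorphic weight-$2$ forms on $\Gamma_0(5)$. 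Dividing by the unit $\mathcal{E}_{2,5}$, the ratio $f_n(z):=U_5(\mathcal{E}_{2,5}Z^n)/\mathcal{E}_{2,5}$ is a modular function on $\Gamma_0(5)$, and by the Hauptmodul property equals $R_n(Y)$ for some $R_n\in\C(Y)$.

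Next I would bound the orders of $f_n$ at the two cusps. At $\infty$, from $\mathcal{E}_{2,5}=1+O(q)$ and $Z=q+O(q^2)$ we get $\mathcal{E}_{2,5}Z^n=q^n+O(q^{n+1})$; since $U_5$ retains only the terms whose exponent is divisible by $5$, the result has $q$-valuation $\ge \lceil n/5\rceil$, and division by the unit $\mathcal{E}_{2,5}$ preserves this, so $\operatorname{ord}_\infty f_n\ge\lceil n/5\rceil$. At the cusp $0$, I would apply the Fricke involution $w_5\colon z\mapsto -1/(5z)$, using the computations $\mathcal{E}_{2,5}|_2 w_5=-\mathcal{E}_{2,5}$ and $Y\circ w_5=1/(125\,Y)$, together with the classical identity relating $(U_5F)|_k w_5$ on $\Gamma_0(5)$ to $F|_k w_{25}$ on $\Gamma_0(25)$; this transports the order-at-$0$ calculation to an order-at-$\infty$ calculation and yields $\operatorname{ord}_0 f_n\ge -n$. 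Together with the divisor of $Y$, these force $R_n(Y)=\sum_{m=\lceil n/5\rceil}^{n}c_m Y^m$ with $c_m\in\C$. Finally, for integrality: because $\mathcal{E}_{2,5}$, $Z$ and $Y$ all have $q$-expansions in $\Z[[q]]$ and $\mathcal{E}_{2,5}$ has constant term $1$ and is hence a unit there, $f_n\in\Z[[q]]$; since $Y^m=q^m+O(q^{m+1})$, the change-of-basis matrix from $\{Y^m\}$ to the $q$-monomial basis is lower triangular with ones on the diagonal, and the $c_m$ can be solved for triangularly, each step producing an integer.

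I expect the main obstacle to be the order bound at the cusp $0$. While the estimate at $\infty$ is a transparent $q$-expansion observation, the behavior of $U_5(\mathcal{E}_{2,5}Z^n)$ at $0$ combines the expansions of $\mathcal{E}_{2,5}Z^n$ at the five cusps of $\Gamma_0(25)$ lying over $0\in X_0(5)$. Executing this sharply enough to produce the bound $n$ (rather than something larger) -- cleanly via the Fricke involution $w_5$ and the Atkin-Lehner operator $w_{25}$ -- is where the real work lies.
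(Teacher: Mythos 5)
Your route (Hauptmodul plus divisor bounds at the two cusps) is genuinely different from the paper's, which instead writes $\mathcal{E}_{2,5}Z^n=\mathcal{E}_{2,5}\bigl(\eta(5z)^5/\eta(z)\bigr)^n\bigl(\eta(25z)/\eta(5z)^5\bigr)^n$, pulls the $\eta(25z)$-part through $U_5$, and identifies $U_5\bigl(\mathcal{E}_{2,5}(\eta(5z)^5/\eta(z))^n\bigr)$ inside the finite-dimensional space $M_{2n+2}(\Gamma_0(5))$ (with character $\leg{\cdot}{5}$ for $n$ odd), for which a dimension count shows $\{\mathcal{E}_{2,5}\,\eta(z)^{5n-6m}\eta(5z)^{6m-n}\}_{0\le m\le n}$ is a basis. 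But your version has a genuine gap: the claim that $\mathcal{E}_{2,5}$ is nonzero on the upper half-plane is false. By the valence formula a nonzero form in $M_2(\Gamma_0(5))$ has total weighted zero-order $\tfrac{2}{12}\cdot 6=1$, and since $\mathcal{E}_{2,5}$ is nonzero at both cusps it must vanish in the interior (in fact to order $1$ at each of the two elliptic points of order $2$). Consequently $f_n=U_5(\mathcal{E}_{2,5}Z^n)/\mathcal{E}_{2,5}$ is a priori only a rational function of $Y$ with possible poles at the images of these interior zeros, and your two cusp-order bounds do not force $R_n$ to be a polynomial in $Y$. This is exactly the issue the paper's dimension count silently disposes of: it shows every element of $M_{2n+2}(\Gamma_0(5))$ is $\mathcal{E}_{2,5}$ times a holomorphic eta-quotient combination, so the division never introduces interior poles.

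The gap is repairable within your framework, but it needs an extra argument you did not supply: any weight-$2$ form holomorphic on the upper half-plane satisfies $f(P)=(cP+d)^2f(P)=-f(P)$ at an order-$2$ elliptic fixed point $P$, hence vanishes there; applied to $U_5(\mathcal{E}_{2,5}Z^n)$ and to $\mathcal{E}_{2,5}$ itself (whose zeros are then exactly these two points, each simple, by the valence formula), this shows the quotient is holomorphic on the upper half-plane after all, and your cusp bounds then do yield $R_n(Y)=\sum_{m=\lceil n/5\rceil}^{n}c_mY^m$. The remaining pieces of your write-up (the valuation $\ge\lceil n/5\rceil$ at $\infty$ from the $q$-expansion, the pole bound $n$ at $0$ via $w_5$/$w_{25}$, and the triangular integrality argument) are sound, and the trade-off is clear: your approach generalizes cleanly to any genus-zero level but must confront elliptic points and Atkin--Lehner bookkeeping at the cusp $0$, whereas the paper's explicit basis handles interior behavior, cusp behavior, and the exact range of $m$ in one stroke.
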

\begin{proof}
We need the following dimension formulas which
follow from \cite{Co-Oe} and \cite[Theorem 3.8]{Ki-book}.
For $k$ even,
\begin{align*}
\dim M_k(\Gamma_0(5)) &= 2 \left\lfloor \frac{k}{4} \right\rfloor + 1,\\
\dim M_k(\Gamma_0(5),\leg{\cdot}{5}) &= k - 2 \left\lfloor \frac{k}{4} \right\rfloor.
\end{align*}
Let $n$ be a positive integer. Then
$$
U_5(\mathcal{E}_{2,5} Z^n) 
= U_5\left(\mathcal{E}_{2,5}(z) \left(\frac{\eta(5z)^5}{\eta(z)}\right)^n
                             \left(\frac{\eta(25z)}{\eta(5z)^5}\right)^n
\right)
= U_5\left(\mathcal{E}_{2,5}(z) \left(\frac{\eta(5z)^5}{\eta(z)}\right)^n
\right)  \left(\frac{\eta(5z)}{\eta(z)^5}\right)^n.
$$
When $n$ is even the function
$$
\mathcal{E}_{2,5}(z) \left(\frac{\eta(5z)^5}{\eta(z)}\right)^n
$$
belongs to the space $M_{2n+2}(\Gamma_0(5))$, which has as a basis
$$
\left\{\mathcal{E}_{2,5}(z) \eta(z)^{5n-6m} \eta(5z)^{6m-n}\:,\: 0 \le m \le n
\right\}. 
$$
This follows from the dimension formula. We note that
$$
\ord(\mathcal{E}_{2,5}(z) \eta(z)^{5n-6m} \eta(5z)^{6m-n}; i\infty)=m.
$$ 
The operator $U_5$ preserves the space $M_{2n+2}(\Gamma_0(5))$.
It follows that there are integers $c_m$ $(\lceil\frac{n}{5}\rceil \le m \le n$) such 
that
$$
U_5(\mathcal{E}_{2,5} Z^n) 
= \mathcal{E}_{2,5}(z) \sum_{m=\lceil\frac{n}{5}\rceil}^n c_m \,\eta(z)^{5n-6m} \eta(5z)^{6m-n}
 \left(\frac{\eta(5z)}{\eta(z)^5}\right)^n
= 
 \mathcal{E}_{2,5}(z) \sum_{m=\lceil\frac{n}{5}\rceil}^n c_m Y^m.
$$
When $n$ is odd the proof is similar except this time one needs
to work in the space
$M_{2n+2}(\Gamma_0(5),\leg{\cdot}{5})$.
\end{proof}

\begin{corollary}
\mylabel{cor:5.3}
\begin{align}
U_5(\mathcal{E}_{2,5}) &= \mathcal{E}_{2,5}\mylabel{eq:U5Z0}\\
U_5(\mathcal{E}_{2,5} Z) &= 5^3 \, \mathcal{E}_{2,5}  Y\mylabel{eq:U5Z1}\\
%% 4*`5`^2*Y+`5`^5*Y^2
U_5(\mathcal{E}_{2,5} Z^2) &= 5^2 \, \mathcal{E}_{2,5}(4 Y + 5^3 Y^2)\mylabel{eq:U5Z2}\\
%% 9*`5`*Y+9*`5`^4*Y^2+`5`^7*Y^3
U_5(\mathcal{E}_{2,5} Z^3) 
&= 5 \, \mathcal{E}_{2,5}(9 Y + 9 \cdot 5^3  Y^2 + 5^6  Y^3)
\mylabel{eq:U5Z3}\\
%% 2*`5`*Y+44*`5`^3*Y^2+14*`5`^6*Y^3+`5`^9*Y^4
U_5(\mathcal{E}_{2,5} Z^4) 
&= 5\,\mathcal{E}_{2,5} (2 Y + 44 \cdot 5^2 Y^2 + 14 \cdot 5^5 Y^3 
        + 5^8 Y^4).
\mylabel{eq:U5Z4}
\end{align}
\end{corollary}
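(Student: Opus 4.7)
The plan is to deduce each of the five identities from Lemma \ref{lem:5.2} by reducing the computation of $U_5(\mathcal{E}_{2,5} Z^n)$ to a finite-dimensional linear problem and then pinning down the coefficients by matching a small number of Fourier coefficients. By Lemma \ref{lem:5.2}, for each $n \in \{0,1,2,3,4\}$ there exist integers $c_{n,m}$ such that
\[
U_5(\mathcal{E}_{2,5} Z^n) = \mathcal{E}_{2,5} \sum_{m=\lceil n/5\rceil}^{n} c_{n,m} Y^m.
\]
The number of unknowns is $1,1,2,3,4$ for $n=0,1,2,3,4$, respectively. Since $\mathcal{E}_{2,5}(z) = 1 + O(q)$ and $Y = q + O(q^2)$, the function $\mathcal{E}_{2,5}(z) Y^m$ has $q$-expansion beginning $q^m + O(q^{m+1})$, so the set $\{\mathcal{E}_{2,5} Y^m\}_{m=0}^{n}$ is linearly independent and the $c_{n,m}$ are uniquely determined by the first $n+1$ Fourier coefficients of both sides.

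Concretely, I would start from the eta-product expansions
\[
Z(z) = q\prod_{k\ge1}\frac{1-q^{25k}}{1-q^k}, \qquad Y(z) = q\prod_{k\ge1}\frac{(1-q^{5k})^6}{(1-q^k)^6},
\]
together with
\[
\mathcal{E}_{2,5}(z) = 1 + 6\sum_{n\ge1}\bigl(\sigma(n) - 5\,[5\mid n]\,\sigma(n/5)\bigr) q^n,
\]
and expand $\mathcal{E}_{2,5}(z) Z^n$ to order $q^{5n}$ for each $n\le4$. Applying $U_5$ amounts to retaining only the coefficients of $q^{5k}$ and relabeling $q^{5k}\mapsto q^k$. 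I would then expand $\mathcal{E}_{2,5}(z) Y^m$ to order $q^n$ for $m\le n\le 4$, and solve the resulting triangular linear system to read off the $c_{n,m}$. For $n=0$ this is immediate: $U_5$ preserves the constant term and $\mathcal{E}_{2,5}$ has constant term $1$, giving \eqref{eq:U5Z0}. For $n=1,2,3,4$ the triangularity (each $\mathcal{E}_{2,5} Y^m$ starts with $q^m$) means the $c_{n,m}$ can be solved one at a time, starting with the smallest power of $q$.

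The computation is entirely mechanical: there is no conceptual obstruction beyond Lemma \ref{lem:5.2}. The only real concern is bookkeeping accuracy in the $q$-expansions, which is most safely handled with a computer algebra system capable of manipulating eta-products and Eisenstein series to the required precision. The integrality of the resulting $c_{n,m}$ is already guaranteed by Lemma \ref{lem:5.2}, and the specific divisibilities by powers of $5$ exhibited in \eqref{eq:U5Z1}--\eqref{eq:U5Z4}---which are the whole point for the congruence applications later in the section---emerge from the explicit computation and will form the seed of the inductive matrix recursion \eqref{eq:mrec5} used in Theorem \ref{thm:mainthm5}.
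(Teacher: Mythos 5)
Your proposal matches the paper's own proof, which simply notes that \eqref{eq:U5Z0} follows from $\dim M_2(\Gamma_0(5))=1$ and that \eqref{eq:U5Z1}--\eqref{eq:U5Z4} follow from Lemma \ref{lem:5.2} together with a straightforward calculation; your write-up just makes explicit the finite coefficient-matching that "straightforward calculation" entails. The key point --- that Lemma \ref{lem:5.2} guarantees the answer lies in a finite-dimensional span of the $\mathcal{E}_{2,5}Y^m$, whose leading $q$-powers are distinct, so checking finitely many Fourier coefficients suffices --- is exactly right.
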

\begin{proof}
Equation \eqn{U5Z0} is elementary. It also follows from the
fact that $\dim M_2(\Gamma_0(5)) = 1$.
Equations \eqn{U5Z1}--\eqn{U5Z4} follow from Lemma \lem{5.2} and
straightforward calculation.
\end{proof}

We need the 5th order modular equation that was used by Watson
to prove Ramanujan's partition congruences for powers of $5$.
\beq
Z^5 = \left(25 Z^4 + 25 Z^3 + 15 Z^2 + 5 Z + 1\right) Y(5z).
\mylabel{eq:ME5}
\eeq

\begin{lemma}
\mylabel{lem:5.4}
For $i\ge0$
$$
U_5( \mathcal{E}_{2,5} Z^i) 
=  \mathcal{E}_{2,5}(z) \,\sum_{j=\lceil\frac{i}{5}\rceil}^i m_{i,j} Y^j,
$$
where $Z=Z(z)$, $Y=Y(z)$ are defined in \eqn{ZYdefs}, and
the $m_{i,j}$ are defined in Theorem \thm{mainthm5}.
\end{lemma}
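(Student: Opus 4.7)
The plan is to prove the identity by strong induction on $i$. The five base cases $0 \le i \le 4$ are provided directly by Corollary \cor{5.3}: comparing with the first five rows of the matrix $M$ displayed in Theorem \thm{mainthm5}, one checks that the coefficient of $\mathcal{E}_{2,5}Y^j$ in $U_5(\mathcal{E}_{2,5}Z^i)$ agrees with $m_{i,j}$ in each case, and that the lower bound $\lceil i/5\rceil$ on the summation holds there (in fact the support starts at $\max(\lceil i/5\rceil, 1)$ for $i\ge 1$).

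For the induction step, fix $i\ge5$ and assume the identity has been established for all smaller nonnegative exponents. The key tool is the Watson modular equation \eqn{ME5}, which, after multiplying through by $Z^{i-5}$, yields
$$
Z^i \,=\, \bigl(25Z^{i-1} + 25Z^{i-2} + 15Z^{i-3} + 5Z^{i-4} + Z^{i-5}\bigr)\,Y(5z).
$$
Because $Y = \eta(5z)^6/\eta(z)^6 = q + O(q^2)$ is an integral $q$-series, $Y(5z)$ is invariant under $z \mapsto z+k$ for $0\le k\le4$, and hence the standard pull-out $U_5\bigl(F(z)\,Y(5z)\bigr) = Y(z)\,U_5\bigl(F(z)\bigr)$ applies. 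Multiplying the modular equation by $\mathcal{E}_{2,5}$ and applying $U_5$ therefore produces
$$
U_5(\mathcal{E}_{2,5}Z^i) \,=\, Y(z)\,\sum_{k=1}^{5} c_k\, U_5(\mathcal{E}_{2,5}Z^{i-k}),
$$
where $(c_1,c_2,c_3,c_4,c_5) = (25,25,15,5,1)$.

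Now I apply the inductive hypothesis to each of the five terms on the right, writing $U_5(\mathcal{E}_{2,5}Z^{i-k}) = \mathcal{E}_{2,5}\sum_j m_{i-k,j}\,Y^j$, and I collect powers of $Y$. Extracting the coefficient of $\mathcal{E}_{2,5}Y^j$ for $j\ge1$ yields exactly the defining recurrence \eqn{mrec5} for $m_{i,j}$, while the vanishing of the $j=0$ term is automatic from the factor $Y(z)$ in front, matching $m_{i,0}=0$ for $i\ge5$. The support condition $j\ge\lceil i/5\rceil$ follows either from Lemma \lem{5.2} applied to the left-hand side, or by propagating it through the recurrence, since each inner term contributes $Y$-exponent at least $\lceil(i-k)/5\rceil + 1 \ge \lceil i/5\rceil$ after multiplication by the outer $Y$. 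The only non-mechanical point is the $U_5$ pull-out, which is valid precisely because $Y$ has no fractional $q$-expansion; beyond that, the argument is clean bookkeeping driven by the modular equation.
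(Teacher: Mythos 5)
Your proof is correct and follows essentially the same route as the paper: base cases from Corollary \cor{5.3}, then induction via Watson's modular equation \eqn{ME5} multiplied by $Z^{i-5}$, with the $U_5$ pull-out of $Y(5z)$ producing exactly the recurrence \eqn{mrec5}. The paper states this more tersely; your added justification of the pull-out (valid because $Y$ has an integral $q$-expansion) and of the support bound is a faithful filling-in of the same argument.
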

\begin{proof}
The result holds for $0 \le i \le 4$ by Corollary \cor{5.3}.
By \eqn{ME5} we have
$$
U_5( \mathcal{E}_{2,5} Z^i) = 
\left(
 25 U_5( \mathcal{E}_{2,5} Z^{i-1}) 
 + 25 U_5( \mathcal{E}_{2,5} Z^{i-2}) 
 + 15 U_5( \mathcal{E}_{2,5} Z^{i-3}) 
 + 5 U_5( \mathcal{E}_{2,5} Z^{i-4}) 
 + U_5( \mathcal{E}_{2,5} Z^{i-5}) \right) Y(z),
$$
for $i\ge 5$. The result follows by induction on $i$ using the
recurrence \eqn{mrec5}.
\end{proof}

\begin{lemma}
\mylabel{lem:5.5} For $i\ge0$,
\begin{align}
U_5( \mathcal{E}_{2,5} Y^i) %%&=  \mathcal{E}_{2,5}(z)\,\sum_{j\ge0} a_{i,j} Y^j,
& = \mathcal{E}_{2,5}(z)\,\sum_{j=\lceil\frac{i}{5}\rceil}^{5i} a_{i,j} Y^j,
\mylabel{eq:U5E2Yi}\\
U_5( \mathcal{E}_{2,5} Z Y^i) %%&= \mathcal{E}_{2,5}(z)\,\sum_{j\ge0} b_{i,j} Y^j
& = \mathcal{E}_{2,5}(z)\,\sum_{j=\lceil\frac{i+1}{5}\rceil}^{5i+1} b_{i,j} Y^j,
\mylabel{eq:U5E2ZYi}
\end{align}
where the $a_{i,j}$, $b_{i,j}$ are defined in \eqn{abdefs}.
\end{lemma}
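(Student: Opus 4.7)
The plan is to reduce the two identities to the already--established Lemma \lem{5.4} via the eta--quotient identity
$$
Z(z)^6 \;=\; Y(z)\, Y(5z),
$$
which is immediate from the definitions in \eqn{ZYdefs}, since both sides equal $\eta(25z)^6/\eta(z)^6$. (Alternatively, this is equivalent to the factorization $Y = Z\cdot P(Z)$ coming from the modular equation \eqn{ME5}, but we do not need that form here.)

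Using this, I would rewrite
$$
Z^{6i} \;=\; Y(z)^i \, Y(5z)^i, \qquad Z^{6i+1} \;=\; Z\cdot Y(z)^i \, Y(5z)^i.
$$
Since $Y$ has period $1$, the standard pull--out rule $U_5\bigl(h(z)\,g(5z)\bigr) = g(z)\,U_5(h(z))$ (applied with $g = Y^i$) yields
$$
U_5(\mathcal{E}_{2,5}\, Z^{6i}) \;=\; Y^i \, U_5(\mathcal{E}_{2,5}\, Y^i),
\qquad
U_5(\mathcal{E}_{2,5}\, Z^{6i+1}) \;=\; Y^i \, U_5(\mathcal{E}_{2,5}\, Z\, Y^i).
$$
Dividing through by $Y^i$ (which is a nonzero modular function on $\Gamma_0(5)$) and inserting the explicit expansion from Lemma \lem{5.4} into the left--hand side produces
$$
U_5(\mathcal{E}_{2,5}\, Y^i) \;=\; \mathcal{E}_{2,5}(z)\sum_{j} m_{6i,j}\, Y^{j-i},\qquad U_5(\mathcal{E}_{2,5}\, Z\, Y^i) \;=\; \mathcal{E}_{2,5}(z)\sum_{j} m_{6i+1,j}\, Y^{j-i}.
$$
After reindexing $j' := j-i$, these become the desired identities \eqn{U5E2Yi} and \eqn{U5E2ZYi} with $a_{i,j'} = m_{6i,\,i+j'}$ and $b_{i,j'} = m_{6i+1,\,i+j'}$ as defined in \eqn{abdefs}.

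Finally, I would check that the ranges of summation match. From Lemma \lem{5.4}, $j$ runs over $\lceil 6i/5 \rceil \le j \le 6i$ in the first case and $\lceil (6i+1)/5 \rceil \le j \le 6i+1$ in the second. Since $\lceil 6i/5\rceil - i = \lceil i/5 \rceil$ and $\lceil (6i+1)/5 \rceil - i = \lceil (i+1)/5 \rceil$, the shifted index $j'$ ranges exactly over $[\lceil i/5\rceil, 5i]$ and $[\lceil (i+1)/5\rceil, 5i+1]$, as claimed. The heart of the proof is the single observation $Z^6 = Y(z)Y(5z)$; everything else is bookkeeping, in the spirit of Watson's reduction step for $p(n)$ modulo powers of $5$, and no real obstacle is anticipated.
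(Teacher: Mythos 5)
Your proof is correct and is essentially the paper's own argument: both rest on the identity $Z^6 = Y(z)\,Y(5z)$ (the paper writes it as $Y^i = Z^{6i}Y(5z)^{-i}$), the pull-out rule for $U_5$, and then Lemma \lem{5.4} with the reindexing $j \mapsto j-i$. The range verification also matches the paper's.
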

\begin{proof}
Suppose $i\ge0$. By Lemma \lem{5.4}
\begin{align*}
U_5( \mathcal{E}_{2,5} Y^i) &= U_5( \mathcal{E}_{2,5} Z^{6i} Y(5z)^{-i})
= Y^{-i}  U_5( \mathcal{E}_{2,5} Z^{6i})\\
&= Y^{-i}  \mathcal{E}_{2,5}(z)\,\sum_{j=\lceil\frac{6i}{5}\rceil}^{6i} m_{6i,j} Y^j\\
&=   \mathcal{E}_{2,5}(z)\,\sum_{j\ge \lceil\frac{i}{5}\rceil}^{5i} m_{6i,i+j} Y^j 
=   \mathcal{E}_{2,5}(z)\,\sum_{j\ge \lceil\frac{i}{5}\rceil}^{5i} a_{i,j} Y^j,
\end{align*}
which is \eqn{U5E2Yi}.
Similarly
\begin{align*}
U_5( \mathcal{E}_{2,5} Z Y^i) &= U_5( \mathcal{E}_{2,5} Z^{6i+1} Y(5z)^{-i})
= Y^{-i} U_5( \mathcal{E}_{2,5} Z^{6i+1})\\
&= Y^{-i}  \mathcal{E}_{2,5}(z) \,\sum_{j=\lceil\frac{6i+1}{5}\rceil}^{6i+1}
                                    m_{6i+1,j} Y^j\\
&=  \mathcal{E}_{2,5}(z)\, \sum_{j=\lceil\frac{i+1}{5}\rceil}^{5i+1}
                                    m_{6i+1,i+j} Y^j 
=   \mathcal{E}_{2,5}(z)\,\sum_{j=\lceil\frac{i+1}{5}\rceil}^{5i+1} b_{i,j} Y^j,
\end{align*}
which is \eqn{U5E2ZYi}.
\end{proof}

\begin{proof}[Proof of Theorem \thm{mainthm5}]
We proceed by induction. The case $a=1$ of \eqn{gen5odd} is \eqn{a5n1id}.
We now suppose $a\ge1$ is fixed and \eqn{gen5odd} holds. Thus
$$                     
E(q^5) \,\sum_{n=0}^\infty
\left(  \mathbf{a}(5^{2a-1} n  - t_a )  + 5\,  \mathbf{a}(5^{2a-3} n - t_{a-1})\right)
q^{n}  
=
\mathcal{E}_{2,5}(z) \sum_{i\ge0} x_{2a-1,i} Y^i.
$$
We now apply the $U_5$ operator to both sides and use Lemma \lem{5.5}.
\begin{align*}         
& E(q) \, \sum_{n=0}^\infty
\left(  \mathbf{a}(5^{2a} n  - t_a )  + 5\,  \mathbf{a}(5^{2a-2} n - t_{a-1})\right)
q^{n}  
=
\sum_{i\ge0} x_{2a-1,i} U_5(\mathcal{E}_{2,5}(z) Y^i) \\
&= \mathcal{E}_{2,5}(z) 
\sum_{i\ge0} x_{2a-1,i} \sum_{j\ge0} a_{i,j} Y^j  
=
 \mathcal{E}_{2,5}(z)  \sum_{j\ge0}\left(\sum_{i\ge0} x_{2a-1,i} a_{i,j} \right)Y^j  
=
\mathcal{E}_{2,5}(z) \sum_{j\ge0} x_{2a,j} Y^j.
\end{align*}
We obtain \eqn{gen5even} by dividing both sides by $\eta(z)$.

Now again suppose $a$ is fixed and \eqn{gen5even} holds.
Multiplying both sides by $\eta(25z)$ gives
$$                     
E(q^{25})\,\sum_{n=0}^\infty
\left(  \mathbf{a}(5^{2a} n  - t_a )  + 5\,  \mathbf{a}(5^{2a-2} n - t_{a-1})\right)
q^{n+1}  
=
\mathcal{E}_{2,5}(z) \sum_{i\ge0} x_{2a,i}  Z Y^i.
$$
We apply the $U_5$ operator to both sides.                                   
$$                   
E(q^5) \, \sum_{n=0}^\infty
\left(  \mathbf{a}(5^{2a} (5n-1)  - t_a )  + 5\,  \mathbf{a}(5^{2a-2} (5n-1) - t_{a-1})\right)
q^{n}  
=
\sum_{i\ge0} x_{2a,i}  U_5(\mathcal{E}_{2,5}(z) Z Y^i).  
$$
Using  Lemma \lem{5.5} and the fact that $t_{a+1} = 5^{2a} + t_a$ we have
\begin{align*}       
E(q^5) \,\sum_{n=0}^\infty
&\left(  \mathbf{a}(5^{2a+1}n  - t_{a+1} )  + 5\,  \mathbf{a}(5^{2a-1}n - t_{a})\right)
q^{n}   
=
\mathcal{E}_{2,5}(z) \sum_{i\ge0} x_{2a,i}  \sum_{j\ge0} b_{i,j} Y^j  \\
&= \mathcal{E}_{2,5}(z)
\sum_{j\ge0}\left(\sum_{i\ge0} x_{2a,i} b_{i,j} \right)Y^j  
= \mathcal{E}_{2,5}(z)
\sum_{j\ge0} x_{2a+1,j} Y^j.
\end{align*}
We obtain \eqn{gen5odd} with $a$ replaced by $a+1$ after dividing both sides
by $\eta(5z)$. This completes the proof of the theorem.
\end{proof}

Throughout this section we will make repeated use of the following
lemma which we leave as an exercise.

\begin{lemma}
\mylabel{lem:floorbnd}
Suppose $x$, $y$, $n\in\Z$ and $n>0$. Then
\beq
\left\lfloor \frac{x}{n} \right\rfloor + 
\left\lfloor \frac{y}{n} \right\rfloor
 \ge 
\left\lfloor \frac{x+y-n+1}{n} \right\rfloor.
\mylabel{eq:floorineq}
\eeq
\end{lemma}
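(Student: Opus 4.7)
The plan is to reduce the inequality to elementary bookkeeping on residues modulo $n$. I will invoke the division algorithm to write $x = an + r$ and $y = bn + s$ with $a, b \in \Z$ and $0 \le r, s \le n-1$, so that the left-hand side of \eqn{floorineq} equals $a+b$ exactly. Substituting these expressions into the numerator on the right gives
\[
\left\lfloor \frac{x+y-n+1}{n} \right\rfloor
= \left\lfloor \frac{(a+b-1)n + (r+s+1)}{n} \right\rfloor
= a + b - 1 + \left\lfloor \frac{r+s+1}{n} \right\rfloor.
\]

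The key observation is then that $1 \le r+s+1 \le 2n-1$, so the remaining floor takes only the values $0$ or $1$; in either case the right-hand side is at most $a+b$, which is precisely the left-hand side. Nothing in this argument is subtle, and the only real content is the case split driven by the bound $r+s+1 \le 2n-1$. I note in passing that the term $-n+1$ in the numerator on the right is essential: replacing $x+y-n+1$ by $x+y$ would make the inequality fail whenever $x \equiv y \equiv -1 \pmod n$ (for instance, $x = y = n-1$ gives left-hand side $0$ but right-hand side $1$).
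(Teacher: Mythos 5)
Your proof is correct and complete: the division-algorithm reduction to $\bigl\lfloor (r+s+1)/n \bigr\rfloor \in \{0,1\}$ settles the inequality. The paper leaves this lemma as an exercise and supplies no proof of its own, so there is nothing to compare against; your argument is the standard one and fills the gap properly.
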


For any prime $\ell$ we let $\pi(n)=\pi_\ell(n)$ denote the
exact power of $\ell$ that divides $n$.

Then
\begin{lemma}
\mylabel{lem:5.6}
$$
\pi_5(m_{i,j}) \ge \lfloor \tfrac{1}{2}(5j - i +1)\rfloor,
$$
where the matrix $M=(m_{i,j})_{i,j\ge0}$ is defined in
Theorem \thm{mainthm5}.
\end{lemma}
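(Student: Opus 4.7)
The plan is to induct on $i$, using the first five rows of $M$ as the base case and the $5$-adic valuations $(2, 2, 1, 1, 0)$ of the coefficients $(25, 25, 15, 5, 1)$ appearing in the recurrence \eqn{mrec5}. The underlying observation is that the drop in the target $\lfloor (5j - i + 1)/2\rfloor$ as $(i,j)$ is replaced by $(i-k, j-1)$ is exactly compensated (and for even $k$ slightly over-compensated) by the $5$-adic valuation of the $k$-th coefficient in \eqn{mrec5}, so the ultrametric inequality applied to the recurrence will yield exactly the claimed bound.

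For the base case $0 \le i \le 4$, each nonzero entry in the displayed rows is a small integer times an explicit power of $5$, so $\pi_5(m_{i,j})$ can be read off directly and compared to $\lfloor (5j - i + 1)/2 \rfloor$; for example $\pi_5(m_{4,2}) = \pi_5(44 \cdot 5^3) = 3 = \lfloor 7/2 \rfloor$, and the other nonzero entries check out equally quickly, with the bound being met with equality in most cases. For the inductive step, fix $i \ge 5$; if $j = 0$ the claim is trivial since $m_{i,0} = 0$, so assume $j \ge 1$. Applying the ultrametric inequality to \eqn{mrec5} and invoking the inductive hypothesis on each of the five summands yields
\beqs
\pi_5(m_{i,j}) \ge \min_{1 \le k \le 5}\left( a_k + \left\lfloor \tfrac{1}{2}\bigl(5(j-1) - (i-k) + 1\bigr) \right\rfloor \right),
\eeqs
where $(a_1, a_2, a_3, a_4, a_5) = (2, 2, 1, 1, 0)$. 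Setting $N = 5j - i + 1$, the proof reduces to the elementary inequality
\beqs
a_k + \lfloor (N + k - 5)/2 \rfloor \ge \lfloor N/2 \rfloor \qquad (k = 1, 2, 3, 4, 5),
\eeqs
which is verified by a two-case check on the parity of $N$.

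The main obstacle, such as it is, is only the floor-function bookkeeping. The inequality is tightest for odd $k$, where the coefficient valuation $a_k$ exactly matches the ``target deficit'' $(5-k)/2$, while the even-$k$ terms carry an extra half unit of slack that the floor either absorbs (when $N$ is even) or converts into a strict surplus (when $N$ is odd). In no case does one of the five summands fall below the claimed bound, so the ultrametric step closes the induction cleanly; the structure of the recurrence is already ideally matched to the bound being proved.
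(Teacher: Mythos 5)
Your proposal is correct and follows exactly the route the paper intends: verify the bound directly on the five explicit initial rows, then induct on $i$ via the recurrence \eqn{mrec5}, using the coefficient valuations $(2,2,1,1,0)$ and the floor inequality $a_k+\lfloor(N+k-5)/2\rfloor\ge\lfloor N/2\rfloor$ to close the ultrametric estimate. The paper's own proof is just a two-line sketch of this same argument, so you have simply supplied the details it omits.
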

\begin{proof}
First we verify the result for $0\le i \le 4$.
The result is easily proven for $i\ge 5$ using the 
recurrence \eqn{mrec5}. 
\end{proof}

\begin{corollary}
\mylabel{cor:5.7}
$$
\pi_5(a_{i,j}) \ge  \lfloor \tfrac{1}{2}(5j - i +1)\rfloor, \qquad
\pi_5(b_{i,j}) \ge  \lfloor \tfrac{1}{2}(5j - i)\rfloor, 
$$
where the $a_{i,j}$, $b_{i,j}$ are defined by \eqn{abdefs}.
\end{corollary}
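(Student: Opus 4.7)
The plan is to derive Corollary \cor{5.7} by straightforward substitution into the bound provided by Lemma \lem{5.6}. Recall from \eqn{abdefs} that
\[
a_{i,j} = m_{6i,\,i+j}, \qquad b_{i,j} = m_{6i+1,\,i+j},
\]
so each entry $a_{i,j}$ and $b_{i,j}$ is literally an entry of the matrix $M$ whose $5$-adic valuation has already been estimated.

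First, I would apply Lemma \lem{5.6} to $m_{6i,\,i+j}$, using the row index $6i$ and column index $i+j$. This gives
\[
\pi_5(a_{i,j}) = \pi_5(m_{6i,\,i+j}) \ge \left\lfloor \tfrac{1}{2}\bigl(5(i+j) - 6i + 1\bigr) \right\rfloor = \left\lfloor \tfrac{1}{2}(5j - i + 1) \right\rfloor,
\]
which is exactly the claimed bound for $a_{i,j}$. Next, I would apply Lemma \lem{5.6} to $m_{6i+1,\,i+j}$, obtaining
\[
\pi_5(b_{i,j}) = \pi_5(m_{6i+1,\,i+j}) \ge \left\lfloor \tfrac{1}{2}\bigl(5(i+j) - (6i+1) + 1\bigr) \right\rfloor = \left\lfloor \tfrac{1}{2}(5j - i) \right\rfloor,
\]
which is the claimed bound for $b_{i,j}$.

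There is no genuine obstacle here: the corollary is a formal specialization of Lemma \lem{5.6} under the reindexing $(i,j) \mapsto (6i, i+j)$ and $(i,j) \mapsto (6i+1, i+j)$, and the arithmetic of the floor expressions collapses cleanly because $5(i+j) - 6i = 5j - i$. Thus the entire proof amounts to plugging the relevant row and column indices into the bound of Lemma \lem{5.6}, with no need to invoke the recurrence \eqn{mrec5} or the floor inequality of Lemma \lem{floorbnd} a second time.
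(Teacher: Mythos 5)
Your proof is correct and is exactly the intended derivation: the paper states Corollary \cor{5.7} as an immediate consequence of Lemma \lem{5.6}, obtained by substituting the row indices $6i$ (resp.\ $6i+1$) and column index $i+j$ from \eqn{abdefs} into the bound $\pi_5(m_{i,j}) \ge \lfloor \tfrac{1}{2}(5j-i+1)\rfloor$. The arithmetic simplification $5(i+j)-6i = 5j-i$ is right, so nothing further is needed.
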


\begin{lemma}
\mylabel{lem:5.8}
For $b\ge2$, and $j\ge1$,
\begin{align}
%%\pi_5(x_{2b-1,j}) &\ge 5b-6 +  \lfloor \tfrac{1}{2}(5j - 7)\rfloor,
\pi_5(x_{2b-1,j}) &\ge 5b-6 +  \max(0,\lfloor \tfrac{1}{2}(5j - 7)\rfloor),
\mylabel{eq:nux2b1j}\\
\pi_5(x_{2b,j}) &\ge 5b-4 +  \lfloor \tfrac{1}{2}(5j - 5)\rfloor.
\mylabel{eq:nux2bj}
\end{align}
\end{lemma}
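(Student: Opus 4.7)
The plan is to prove Lemma \lem{5.8} by induction on $b \geq 2$, treating the two bounds \eqn{nux2b1j} and \eqn{nux2bj} in tandem. The main tools are the recurrence \eqn{xrec}, Corollary \cor{5.7}, Lemma \lem{floorbnd}, and the vanishing $m_{i,k}=0$ for $k<\lceil i/5\rceil$ built into the proof of Lemma \lem{5.4}.

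Before the induction I would establish the auxiliary identity $x_{a,0}=0$ for all $a\geq 3$. Since $\lceil 6i/5\rceil>i$ for $i\geq 1$ and $\lceil (6i+1)/5\rceil>i$ for $i\geq 0$, the matrix-structure vanishing gives $a_{i,0}=m_{6i,i}=0$ for $i\geq 1$ and $b_{i,0}=m_{6i+1,i}=0$ for all $i\geq 0$; propagating these through \eqn{xrec} starting from $\vec{x}_1=(-5,5^4,0,\ldots)$ yields the claim. The base case $b=2$ is then a finite verification: compute $\vec{x}_2=\vec{x}_1A$, $\vec{x}_3=\vec{x}_2B$, $\vec{x}_4=\vec{x}_3A$ using low-index entries $m_{i,j}$ obtained from the stated first five rows of $M$ and \eqn{mrec5}, and check the claimed bounds term by term for $x_{3,j}$ and $x_{4,j}$ with $j\geq 1$.

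For the inductive step, assume \eqn{nux2b1j} and \eqn{nux2bj} hold for some $b\geq 2$. The auxiliary vanishing reduces $x_{2b+1,j}=\sum_{i\geq 1}x_{2b,i}b_{i,j}$, and combining the inductive bound with the $b_{i,j}$ estimate of Corollary \cor{5.7} reduces the required estimate to the elementary inequality
\begin{equation*}
\lfloor \tfrac{5i-5}{2} \rfloor + \lfloor \tfrac{5j-i}{2} \rfloor \;\geq\; 3 + \max\!\left(0,\lfloor \tfrac{5j-7}{2} \rfloor\right), \qquad i,j\geq 1,
\end{equation*}
which I would verify via Lemma \lem{floorbnd} with a short case split on $i$ (separating $i=1$ from $i\geq 2$) and on the parity of $j$. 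The parallel step $\vec{x}_{2b+1}\to\vec{x}_{2b+2}$, using $a_{i,j}$ in place of $b_{i,j}$ and the freshly proved bound for $\vec{x}_{2b+1}$, carries through without exceptional cases.

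The main obstacle is the single pair $(i,j)=(1,1)$ in the displayed inequality, where Corollary \cor{5.7} yields only $\pi_5(b_{1,1})\geq 2$---one short of what is needed. I would close this gap using the sharper exact value $\pi_5(b_{1,1})=\pi_5(m_{7,2})=3$, obtained from $m_{7,2}=25m_{6,1}+25m_{5,1}+15m_{4,1}+5m_{3,1}+m_{2,1}=0+25+150+225+100=500=4\cdot 5^3$ via \eqn{mrec5}. With this one explicit sharpening in place, the induction closes and the lemma follows.
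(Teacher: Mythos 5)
Your proposal is correct and follows essentially the same route as the paper: induction on $b$ alternating the $A$- and $B$-matrix steps, with Corollary \cor{5.7} and Lemma \lem{floorbnd} handling the generic terms, and the single exceptional pair $(i,j)=(1,1)$ in the $B$-step closed by the exact value $\pi_5(b_{1,1})=\pi_5(500)=3$ --- precisely the observation the paper makes at the same point. The only differences are bookkeeping (you verify the base case for both $\vec{x}_3$ and $\vec{x}_4$ and make explicit the vanishing $x_{a,0}=0$, whereas the paper derives the even-index bound from the odd one within the induction), which does not change the argument.
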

\begin{proof}
A calculation gives
\begin{align*}
\vec{x}_3 &= (x_{3,0}, x_{3,1}, x_{3,2}, \cdots )\\
&=(0, 669303124 \cdot 5^{4}, 3328977476 \cdot 5^{11}, 366098988268 \cdot 5^{14}, 
 201318006648837 \cdot 5^{15}, 1618593700646527 \cdot 5^{18}, \\
& 6370852555263938 \cdot 5^{21}, 
 2900024541422883 \cdot 5^{25}, 4237895677971369 \cdot 5^{28}, 
 21327793208615511 \cdot 5^{30}, \\
& 15532659183030861 \cdot 5^{33},
  8481639849706179 \cdot 5^{36}, 3564573506915806 \cdot 5^{39}, 
 1175454967692313 \cdot 5^{42}, \\
& 1542192101361916 \cdot 5^{44}, 
 325171329708596 \cdot 5^{47}, 55431641829564 \cdot 5^{50}, 
 1532152033009 \cdot 5^{54}, 171561318777 \cdot 5^{57}, \\
& 77490966671 \cdot 5^{59}, 5598792206 \cdot 5^{62}, 
 318906274 \cdot 5^{65}, 2799863 \cdot 5^{69}, \\
&91379 \cdot 5^{72}, 10439 \cdot 5^{74}, 149 \cdot 5^{77}, 5^{80}, 0, \cdots),
\\
\pi_5(\vec{x}_3) &= 
(\infty,4,11,14,15,18,21,25,28,30,33,36,39,42,44,47,50,54,57,59,62,65,69,72,74,77,80,
 \infty, \infty, \cdots),
\end{align*}
and \eqn{nux2b1j} holds for $b=2$. 
Now suppose $b\ge2$ is fixed and \eqn{nux2b1j}
holds. By \eqn{xrec}
$$
x_{2b,j} = \sum_{i\ge 1} x_{2b-1,i} a_{i,j}.
$$
Then using Corollary \cor{5.7} 
$$
\pi_5(x_{2b,1}) \ge \min( \{5b-4\} \cup \{ 
5b-6 +  \lfloor \tfrac{1}{2}(5i - 7)\rfloor + \lfloor(\tfrac{1}{2}(6-i)\rfloor\,:\, 2 \le i
\le 5\}) = 5b-4,
$$
and \eqn{nux2bj} holds for $j=1$. Suppose $j\ge 2$. Then
\begin{align*}
\pi_5(x_{2b,j}) &\ge  \min_{1 \le i\le 5j} (\pi_5(x_{2b-1,i}) + \pi_5(a_{i,j}))\\
&\ge  \min_{2 \le i\le 5j} (\pi_5(x_{2b-1,1}) + \pi_5(a_{1,j}),  
                     (\pi_5(x_{2b-1,i}) + \pi_5(a_{i,j}))\\
&\ge \min( \{5b-6 + \lfloor\tfrac{1}{2}(5j)\rfloor\}
     \cup
     \{
    5b-6 +  \lfloor \tfrac{1}{2}(5i - 7)\rfloor) + 
      \lfloor \tfrac{1}{2}(5j - i +1)\rfloor\,:\, 2\le i \le 5j\}).
\end{align*}
Now
$$
5b-6 + \lfloor\tfrac{1}{2}(5j)\rfloor
= 5b-4 + \lfloor\tfrac{1}{2}(5j-4)\rfloor.
$$
If $2 \le i \le 5j$, then using Lemma \lem{floorbnd} we have
\begin{align*}
5b-6 +  \lfloor \tfrac{1}{2}(5i - 7)\rfloor) + 
        \lfloor \tfrac{1}{2}(5j - i +1)\rfloor 
&\ge     5b-6 +  \lfloor \tfrac{1}{2}(5j + 4i -7)\rfloor \\
&\ge     5b-6 +  \lfloor \tfrac{1}{2}(5j + 1) \rfloor 
    =    5b-4 +  \lfloor \tfrac{1}{2}(5j - 3) \rfloor 
\end{align*}
and \eqn{nux2bj} holds.
Now suppose $b\ge2$ is fixed and \eqn{nux2bj}
holds. By \eqn{xrec}
$$
x_{2b+1,j} = \sum_{i\ge 1} x_{2b,i} b_{i,j}.
$$
We observe that $\pi_5(b_{1,1}) = \pi_5(500)=3$.
Then using Corollary \cor{5.7} 
$$
\pi_5(x_{2b+1,1}) \ge \min( \{5b-1\} \cup \{ 
5b-4 +  \lfloor \tfrac{1}{2}(5i - 4) + \lfloor(\tfrac{1}{2}(5-i)\rfloor\,:\, 2 \le i
\le 4\}) = 5b-1,
$$
and \eqn{nux2b1j} holds for $j=1$ with $b$ replaced by $b+1$. Suppose $j\ge 2$. Then
\begin{align*}
\pi_5(x_{2b+1,j}) &\ge  \min_{1 \le i\le 5j-1} (\pi_5(x_{2b,i}) + \pi_5(b_{i,j}))\\
&\ge  \min_{2 \le i\le 5j-1} (\pi_5(x_{2b,1}) + \pi_5(b_{1,j}),  
                     (\pi_5(x_{2b,i}) + \pi_5(b_{i,j}))\\
&\ge \min( \{5b-4 + \lfloor\tfrac{1}{2}(5j-1)\rfloor\}
     \cup
     \{
    5b-4 +  \lfloor \tfrac{1}{2}(5i - 4)\rfloor) + 
      \lfloor \tfrac{1}{2}(5j - i)\rfloor\,:\, 2\le i \le 5j-1\}).
\end{align*}
Now
$$
5b-4 + \lfloor\tfrac{1}{2}(5j-1)\rfloor
= 5b-1 + \lfloor\tfrac{1}{2}(5j-7)\rfloor.
$$
If $2 \le i \le 5j-1$, then again using Lemma \lem{floorbnd} we have
\begin{align*}
5b-4 +  \lfloor \tfrac{1}{2}(5i - 4)\rfloor) + 
        \lfloor \tfrac{1}{2}(5j - i)\rfloor
%% &\ge     5b-4 +  \tfrac{1}{2}(5i - 4)+ 
%%        \tfrac{1}{2}(5j - i) - 1            \\
%% &= 5b - 7+ \tfrac{1}{2}(5j) + 2i\\
%% &\ge 5b - 1 + 
%%  \tfrac{1}{2}(5j-4),
& \ge 5b-4 + \lfloor \tfrac{1}{2}(5j + 4i - 5) \rfloor \\
& \ge 5b-4 + \lfloor \tfrac{1}{2}(5j + 3) \rfloor    
=     5b-1 + \lfloor \tfrac{1}{2}(5j - 3) \rfloor    
\end{align*}
and \eqn{nux2b1j} holds with $b$ replaced by $b+1$.
Lemma \lem{5.8} follows by induction.
\end{proof}

\begin{corollary}
\label{cor:5.9}
For $b\ge 2$,
\begin{align}
 \mathbf{a}(5^{2b-1}n + \delta_{2b+1}) + 5 \,  \mathbf{a}(5^{2b-3}n + \delta_{2b-3}) &\equiv 0
\pmod{5^{5b-6}},
\mylabel{eq:a52b1cong}\\
 \mathbf{a}(5^{2b}n + \delta_{2b}) + 5 \,  \mathbf{a}(5^{2b-2}n + \delta_{2b-2}) &\equiv 0
\pmod{5^{5b-4}}.
\mylabel{eq:a52bcong}
\end{align}
For $a\ge1$,
\begin{align}
\spt(5^{a+2}n + \delta_{a+2}) + 5 \, \spt(5^{a}n + \delta_{a}) &\equiv 0
\pmod{5^{2a+1}},
\mylabel{eq:spt5abigcong}\\
\spt(5^{a}n + \delta_{a}) &\equiv 0
\pmod{5^{\lfloor \frac{a+1}{2}\rfloor}}.
\mylabel{eq:spt5alittlecong}
\end{align}
\end{corollary}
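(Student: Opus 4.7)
The plan is to read off the congruences from Theorem \thm{mainthm5} combined with the 5-adic bounds supplied by Lemma \lem{5.8}. First I consider the identity \eqn{gen5odd} at level $a = b$ with $b \geq 2$ and reduce modulo $5^{5b-6}$. By Lemma \lem{5.8}, every coefficient $x_{2b-1,j}$ with $j \geq 1$ is already divisible by $5^{5b-6}$. The constant coefficient $x_{2b-1,0}$ also vanishes for $b \geq 2$: the $n=0$ term on the left-hand side of \eqn{gen5odd} is $\mathbf{a}(-t_b) + 5\mathbf{a}(-t_{b-1})$, and both $t_b$ and $t_{b-1}$ are positive integers when $b \geq 2$, so the left-hand side has zero constant coefficient (in $q$), which, by matching the lowest-order term of $\mathcal{E}_{2,5}/\eta(5z)$, forces $x_{2b-1,0}=0$. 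Hence the right-hand side of \eqn{gen5odd} is $\equiv 0 \pmod{5^{5b-6}}$, and comparing coefficients of $q^{n-5/24}$ yields \eqn{a52b1cong} after rewriting $-t_b$ as $\delta_{2b-1}$ modulo $5^{2b-1}$ by a shift of summation variable. The companion congruence \eqn{a52bcong} comes from the same argument applied to \eqn{gen5even}, using the even-index bound in Lemma \lem{5.8} together with $x_{2b,0}=0$ for $b \geq 2$.

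To deduce \eqn{spt5abigcong}, I would bridge between $\mathbf{a}$ and $\spt$ using \eqn{adef}: because $24\delta_a - 1 \equiv 0 \pmod{5^a}$ and Watson's congruence \eqn{ptn5cong} gives $p(5^a n + \delta_a) \equiv 0 \pmod{5^a}$, we obtain
$$\mathbf{a}(5^a n + \delta_a) \equiv 12\,\spt(5^a n + \delta_a) \pmod{5^{2a}}.$$
Combining this with \eqn{a52b1cong} when $a$ is odd (taking $b = (a+3)/2$) or \eqn{a52bcong} when $a$ is even (taking $b = (a+2)/2$), and noting that $5b-6$ and $5b-4$ respectively exceed $2a+1$, we may reduce modulo $5^{2a+1}$ and cancel the unit $12$ to obtain \eqn{spt5abigcong}.

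Finally, \eqn{spt5alittlecong} follows by induction on $a$ from \eqn{spt5abigcong}. The base cases $a=1$ and $a=2$ both reduce to Andrews' congruence $\spt(5n+4) \equiv 0 \pmod 5$; for $a=2$ one notes $\delta_2 = 24$ and $25n+24 = 5(5n+4)+4$. The inductive step is immediate from the identity $\lfloor (a+3)/2 \rfloor = 1 + \lfloor (a+1)/2 \rfloor$ and the fact that $2a+1 \ge 1+\lfloor(a+1)/2\rfloor$ for $a \geq 1$, so that the divisibility of $5\,\spt(5^a n + \delta_a)$ controls the modulus.

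The main obstacle is purely notational: the theorem produces congruences with arguments $5^{2b-1}n - t_b$ and $5^{2b-3}n - t_{b-1}$ (sharing a common $n$), while the corollary records them with arguments $+\delta_{2b-1}$ and $+\delta_{2b-3}$. The translation works cleanly because $t_b + \delta_{2b-1} \equiv 0 \pmod{5^{2b-1}}$ together with the recursive relation $\delta_{2b-1} = 25\delta_{2b-3} - 1$ (easily checked from the defining congruences $24\delta_{2b-1} \equiv 1 \pmod{5^{2b-1}}$ and $24\delta_{2b-3} \equiv 1 \pmod{5^{2b-3}}$) guarantees the same shift of the summation variable works for both $\mathbf{a}$ terms simultaneously. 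Apart from this careful bookkeeping, everything flows from the machinery already assembled.
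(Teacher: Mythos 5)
Your proposal is correct and follows essentially the same route as the paper's (very terse) proof: the $\mathbf{a}$-congruences from Theorem \thm{mainthm5} together with Lemma \lem{5.8}, the bridge to $\spt$ via $(24n-1)p(n)$ and Ramanujan's congruence \eqn{ptn5cong}, and induction from Andrews' congruence for the base cases --- you merely supply the bookkeeping (the vanishing of $x_{2b-1,0}$ and $x_{2b,0}$ for $b\ge2$, and the shift from $-t_b$ to $\delta_{2b-1}$) that the paper leaves implicit. Your translation also implicitly corrects the subscript $\delta_{2b+1}$ in the stated congruence \eqn{a52b1cong}, which should read $\delta_{2b-1}$.
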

\begin{proof}
The congruences \eqn{a52b1cong}--\eqn{a52bcong} follow from Theorem
\thm{mainthm5} and Lemma \lem{5.8}.
Let
$$
\mbox{dp}(n) = (24n-1) p(n).
$$
Then
\beq
\mbox{dp}(5^a n + \delta_a) \equiv 0 \pmod{5^{2a}},
\mylabel{eq:dpcong5}
\eeq
by \eqn{ptn5cong}.
The congruence \eqn{spt5abigcong} follows from \eqn{a52b1cong}--\eqn{a52bcong},
and \eqn{dpcong5}. Andrews' congruence \eqn{spt5cong} implies
that \eqn{spt5alittlecong} holds for $a=1$, $2$. The general result follows
by induction using \eqn{spt5abigcong}.
\end{proof}

We note that when $a=0$ there is a stronger congruence than \eqn{spt5abigcong}.
We prove that
\beq
\spt(25n-1) + 5\, \spt(n) \equiv 0 \pmod{25}.
\mylabel{eq:spt25cong}
\eeq
%%seq(x2[j],j=0..5);
%%        -5, 984375, 203125000, 9228515625, 146484375000, 762939453125
%%seq(vf5(x2[j]),j=0..5);
%%                           6       9       11      14   17
%%                   -5, 63 5 , 104 5 , 189 5  , 24 5  , 5  
%%seq(vf5B(x2[j]),j=0..5);
%%          [-1, 1], [63, 6], [104, 9], [189, 11], [24, 14], [1, 17]
%%for j from 0 to 5 do ltexp(vf5B(x2[j]));od;
%%-1 \cdot 5^{1}, 
%%63 \cdot 5^{6}, 
%%104 \cdot 5^{9}, 
%%189 \cdot 5^{11}, 
%%24 \cdot 5^{14}, 
%%1 \cdot 5^{17}, 
We have calculated
\begin{align*}
\vec{x}_2 &= (x_{2,0}, x_{2,1}, x_{2,2}, \cdots )\\
&=(- 5^{1}, 63 \cdot 5^{6}, 104 \cdot 5^{9}, 189 \cdot 5^{11}, 
24 \cdot 5^{14}, 5^{17}, 0, \cdots ).
\end{align*}
Thus
\begin{align}
&\sum_{n=0}^\infty
\left(  \mathbf{a}(25 n - 1)  + 5\,  \mathbf{a}(n)\right)
q^{n- \tfrac{1}{24}} 
\mylabel{eq:a25n1id}
\\
&=
5 \frac{ \mathcal{E}_{2,5}(z) }{\eta(z)}\left( 
-1 +  63 \cdot 5^{5} \frac{\eta^{6}(5z)}{\eta^{6}(z)}
+ 104 \cdot 5^{8}  \frac{\eta^{12}(5z)}{\eta^{12}(z)}
+ 189 \cdot 5^{10}  \frac{\eta^{18}(5z)}{\eta^{18}(z)}
+ 24 \cdot 5^{13}  \frac{\eta^{24}(5z)}{\eta^{24}(z)}
+ 5^{16}  \frac{\eta^{30}(5z)}{\eta^{30}(z)}\right),
\nonumber                  
\end{align}
and
$$
\sum_{n=0}^\infty
\left(  \mathbf{a}(25 n - 1)  + 5\,  \mathbf{a}(n)\right)
q^{n- \tfrac{1}{24}} 
\equiv 
20 \frac{E_2(z)}{\eta(z)} \pmod{25}.
$$
But from \eqn{dpid} we see that
$$
\sum_{n=0}^\infty
\left( dp(25 n - 1)  + 5\, dp(n)\right)
q^{n- \tfrac{1}{24}} 
\equiv 
20 \frac{E_2(z)}{\eta(z)} \pmod{25},
$$
and
\begin{align*}
&12 \sum_{n=0}^\infty
\left( \spt(25 n - 1)  + 5\, \spt(n)\right)
q^{n- \tfrac{1}{24}} \\
&=
\sum_{n=0}^\infty
\left(  \mathbf{a}(25 n - 1)  + 5\,  \mathbf{a}(n)\right)
q^{n- \tfrac{1}{24}} 
-
\sum_{n=0}^\infty
\left( dp(25 n - 1)  + 5\, dp(n)\right)
q^{n- \tfrac{1}{24}}  \equiv0\pmod{25},
\end{align*}
which gives \eqn{spt25cong}.

%%%%

\subsection{The SPT-function modulo powers of $7$} 
\mylabel{subsec:7}
%%$$
%%\begin{pmatrix}
%%1 &0 &0 &0 &0 &0 &0 &0 &0 &0 &0 &0 &0 &0 & \cdots \\
%%0 &3 \cdot 7^{2} &7^{4} &0 &0 &0 &0 &0 &0 &0 &0 &0 &0 &0 & \cdots \\
%%0 &10 \cdot 7^{1} &27 \cdot 7^{3} &10 \cdot 7^{5} &7^{7} &0 &0 &0 &0 &0 &0 &0 &0 &0 & \cdots \\
%%0 &7^{1} &190 \cdot 7^{2} &255 \cdot 7^{4} &104 \cdot 7^{6} &17 \cdot 7^{8} &7^{10} &0 &0 &0 &0 &0 &0 &0 & \cdots \\
%%0 &0 &82 \cdot 7^{2} &352 \cdot 7^{4} &2535 \cdot 7^{5} &1088 \cdot 7^{7} &230 \cdot 7^{9} &24 \cdot 7^{11} &7^{13} &0 &0 &0 &0 &0 & \cdots \\
%%0 &0 &114 \cdot 7^{1} &253 \cdot 7^{4} &4169 \cdot 7^{5} &3699 \cdot 7^{7} &11495 \cdot 7^{8} &2852 \cdot 7^{10} &405 \cdot 7^{12} &31 \cdot 7^{14} &7^{16} &0 &0 &0 & \cdots \\
%%0 &0 &9 \cdot 7^{1} &736 \cdot 7^{3} &27970 \cdot 7^{4} &6808 \cdot 7^{7} &38475 \cdot 7^{8} &17490 \cdot 7^{10} &33930 \cdot 7^{11} &5890 \cdot 7^{13} &629 \cdot 7^{15} &38 \cdot 7^{17} &7^{19} &0 & \cdots 
%%\end{pmatrix}
%%$$
\begin{theorem}
\mylabel{thm:mainthm7}
If $a\ge1$ then
\begin{align}
\sum_{n=0}^\infty
\left(  \mathbf{a}(7^{2a-1} n  - u_a )  + 7\,  \mathbf{a}(7^{2a-3} n - u_{a-1})\right)
q^{n- \tfrac{7}{24}} 
&=
\frac{ \mathcal{E}_{2,7}(z) }{\eta(7z)} \sum_{i\ge0} x_{2a-1,i} Y^i, 
\mylabel{eq:gen7odd}\\
\sum_{n=0}^\infty
\left(  \mathbf{a}(7^{2a} n  - u_a )  + 7\,  \mathbf{a}(7^{2a-2} n - u_{a-1})\right)
q^{n- \tfrac{1}{24}} 
&=
\frac{ \mathcal{E}_{2,7}(z) }{\eta(z)} \sum_{i\ge0} x_{2a,i} Y^i, 
\mylabel{eq:gen7even}
\end{align}
where
$$
u_a=\frac{1}{24}(7^{2a}-1),\qquad Y(z) = \frac{\eta(7z)^4}{\eta(z)^4},
$$
$$
\vec{x}_1 = (x_{1,0},x_{1,1},\cdots) = (-7, 3\mytimes 7^3, 7^5, 0, 0, \cdots),
$$
and for $a\ge1$
$$
\vec{x}_{a+1} =
\begin{cases}
\vec{x}_a A, & \mbox{$a$ odd},\\
\vec{x}_a B, & \mbox{$a$ even}.   
\end{cases}
$$
Here  $A=(a_{i,j})_{i\ge 0, j\ge 0}$ and
      $B=(a_{i,j})_{i\ge 0, j\ge 0}$ are defined by
\beq
a_{i,j} = m_{4i,i+j},\qquad
b_{i,j} = m_{4i+1,i+j},
\mylabel{eq:ab7defs}
\eeq
where the matrix $M=(m_{i,j})_{i,j\ge0}$ is defined as follows:
The first seven rows of $M$ are defined so that
$$
U_7( \mathcal{E}_{2,7} Z^i) = \sum_{j=\lceil\frac{2i}{7}\rceil}^{2i} m_{i,j} Y^j
\qquad(0 \le i \le 6),
$$
where
$$
Z(z) = \frac{\eta(49z)}{\eta(z)}. 
$$
and for $i\ge7$, $m_{i,0}=0$, $m_{i,1}=0$, and for $j\ge 2$,
\begin{align}
m_{i,j} &= 49\,m_{i-1,j-1}+35\,m_{i-2,j-1}+7\,m_{i-3,j-1}+343\,m_{i-1,j-2}
	  +343\,m_{i-2,j-2}
	  + 147\,m_{i-3,j-2} 
\mylabel{eq:mrec7}\\
&\qquad + 49\,m_{i-4,j-2}+21\,m_{i-5,j-2}+7\,m_{i-6,j-2}
	  +m_{i-7,j-2}.
\nonumber             
\end{align}
\end{theorem}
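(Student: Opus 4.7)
The plan is to imitate the proof of Theorem \thm{mainthm5} almost verbatim, replacing $5$ by $7$ throughout and using the appropriate seventh-order modular equation in place of \eqn{ME5}. For the base case $a=1$ of \eqn{gen7odd}, I would apply Theorem \thm{Gell} to $\alpha_7(z)$: the form $G_7(z) \in M_8(\Gamma_0(7))$ can be expanded in a basis of $\mathcal{E}_{2,7}$ times $\eta$-quotients whose expansions at $i\infty$ have distinct leading powers (the existence of such a basis follows from the Cohen--Oesterl\'e dimension formulas, exactly as in Lemma \lem{5.2}). The resulting closed form for $G_7(z)$, divided by $\eta(z)^{14}/\eta(7z)$, yields \eqn{gen7odd} with the stated $\vec{x}_1 = (-7, 3\cdot 7^3, 7^5, 0, 0, \dots)$.

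Next I would establish the level $7$ analogues of Lemmas \lem{5.2}, \lem{5.4} and \lem{5.5}. The key step is to show
$$
U_7(\mathcal{E}_{2,7}\, Z^n) = \mathcal{E}_{2,7} \sum_{m=\lceil 2n/7\rceil}^{2n} c_m \, Y^m
$$
for integers $c_m$; this is done by writing $\mathcal{E}_{2,7} \bigl(\eta(7z)^7/\eta(z)\bigr)^n$ as an element of $M_{2n+2}(\Gamma_0(7))$ or of $M_{2n+2}(\Gamma_0(7),\chi_7)$ according to the parity of $n$, and invoking an explicit $\eta$-quotient basis with distinct orders at $i\infty$. Direct computation populates the first seven rows of $M$. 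For $i \ge 7$, the classical seventh-order modular equation
$$
Z^7 = \bigl(49 Z^6 + 35 Z^5 + 7 Z^4\bigr)\, Y(7z) + \bigl(343 Z^6 + 343 Z^5 + 147 Z^4 + 49 Z^3 + 21 Z^2 + 7Z + 1\bigr)\, Y(7z)^2,
$$
used by Watson \cite{Wa38} and verifiable by matching cusp orders together with a Sturm-bound check on Fourier coefficients, lets us rewrite $Z^i = Z^{i-7} \cdot Z^7$; applying $U_7$ and using $U_7(f(z) h(7z)) = h(z) U_7(f)$ produces the recurrence \eqn{mrec7}. The identity $Y = Z^4/Y(7z)$ (immediate from the definitions) then gives $Y^i = Z^{4i} Y(7z)^{-i}$ and $Z Y^i = Z^{4i+1} Y(7z)^{-i}$, so that
$$
U_7(\mathcal{E}_{2,7}\, Y^i) = Y^{-i}\, U_7(\mathcal{E}_{2,7}\, Z^{4i}) = \mathcal{E}_{2,7} \sum_j a_{i,j}\, Y^j,
$$
and likewise for $ZY^i$ with coefficients $b_{i,j}$, matching the definitions \eqn{ab7defs}.

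With this lemma in hand, the induction on $a$ goes through exactly as in the $\ell=5$ case. Assuming \eqn{gen7odd} holds for $a$, multiply by $\eta(7z)$, apply $U_7$ term-by-term on the right using the formula for $U_7(\mathcal{E}_{2,7}\, Y^i)$, and divide by $\eta(z)$ to obtain \eqn{gen7even} with $\vec{x}_{2a} = \vec{x}_{2a-1} A$. Assuming \eqn{gen7even} holds for $a$, multiply by $\eta(49z)$ to introduce a factor of $Z$ on the right, apply $U_7$ using the formula for $U_7(\mathcal{E}_{2,7}\, Z Y^i)$, use $u_{a+1} = 2 \cdot 7^{2a} + u_a$ to align the arithmetic progressions, and divide by $\eta(7z)$ to obtain \eqn{gen7odd} with $a$ replaced by $a+1$ and $\vec{x}_{2a+1} = \vec{x}_{2a} B$. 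The principal obstacle is the preparatory step: verifying the seventh-order modular equation and computing the first seven rows of $M$ explicitly is noticeably more delicate than the five-row, single-shift recursion of the $\ell=5$ case, because the modular equation now lives in degrees $1$ and $2$ of $Y(7z)$ rather than degree $1$ alone, which is exactly why the recurrence \eqn{mrec7} involves a double shift in the second index.
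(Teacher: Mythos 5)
Your proposal follows the paper's own route essentially verbatim: the paper proves Theorem \thm{mainthm7} by the same induction as Theorem \thm{mainthm5}, using the level-$7$ analogues of Lemmas \lem{5.2}, \lem{5.4} and \lem{5.5} (namely Lemmas \lem{7.2}, \lem{7.4}, \lem{7.5} and Corollary \cor{7.3}), the base case from Theorem \thm{Gell}, and Watson's seventh-order modular equation \eqn{ME7}, which you state correctly along with the correct identities $Y=Z^4/Y(7z)$ and $u_{a+1}=2\cdot 7^{2a}+u_a$. The proposal is correct and matches the paper's argument.
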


The proof of the following lemma is analogous to that of
Lemma \lem{5.2}.

\begin{lemma}
\mylabel{lem:7.2}
If $n$ is a positive integer then there are integers $c_m$ 
($\lceil\frac{2n}{7}\rceil \le m \le 2n$)
such that
$$
U_7(\mathcal{E}_{2,7} Z^n) 
= \mathcal{E}_{2,7} \sum_{m=\lceil\frac{2n}{7}\rceil}^{2n} c_m Y^m,
$$
where
\beq
Z(z) = Z_7(z) = \frac{\eta(49z)}{\eta(z)},\qquad Y(z) = \frac{\eta(7z)^4}{\eta(z)^4}.
\mylabel{eq:ZY7defs}
\eeq
\end{lemma}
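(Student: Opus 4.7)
The plan mirrors the proof of Lemma \lem{5.2}, with the weight-$2$ form $\eta(5z)^5/\eta(z)$ replaced by the weight-$3$ form $\eta(7z)^7/\eta(z)$, which I claim lies in $M_3(\Gamma_0(7),\leg{\cdot}{7})$. First we write
\beqs
Z^n = \left(\frac{\eta(7z)^7}{\eta(z)}\right)^n \cdot \left(\frac{\eta(49z)}{\eta(7z)^7}\right)^n,
\eeqs
and observe that the second factor is $g(7z)$ with $g(w)=\eta(7w)/\eta(w)^7$. Since $U_7(F(z)\,g(7z))=U_7(F)(z)\,g(z)$, this yields
\beqs
U_7(\mathcal{E}_{2,7} Z^n) = U_7\!\left(\mathcal{E}_{2,7}(z)\,\eta(z)^{-n}\eta(7z)^{7n}\right)\cdot \eta(z)^{-7n}\eta(7z)^{n}.
\eeqs

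Next, one verifies that $\mathcal{E}_{2,7}(z)(\eta(7z)^7/\eta(z))^n \in M_{3n+2}(\Gamma_0(7),\chi^n)$, where $\chi=\leg{\cdot}{7}$, by checking the standard Ligozat integrality conditions on the eta quotient and computing its Nebentypus. A dimension computation via \cite[Theorem 3.8]{Ki-book} and \cite{Co-Oe} should give $\dim M_{3n+2}(\Gamma_0(7),\chi^n)=2n+1$ for all $n\ge 0$. I then exhibit the explicit eta-product basis
\beqs
\left\{\mathcal{E}_{2,7}(z)\,\eta(z)^{7n-4m}\eta(7z)^{4m-n}\,:\, 0\le m\le 2n\right\}.
\eeqs
Each element satisfies the Ligozat conditions (since $(7n-4m)+7(4m-n)=24m$ and $7(7n-4m)+(4m-n)=24(2n-m)$ are both divisible by $24$), carries the required Nebentypus $\chi^n$, and has $q$-order $m$ at $\infty$; distinct orders ensure linear independence, and the count matches the dimension.

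Since $U_7$ preserves the space $M_{3n+2}(\Gamma_0(7),\chi^n)$, we expand
\beqs
U_7\!\left(\mathcal{E}_{2,7}(z)\,\eta(z)^{-n}\eta(7z)^{7n}\right) = \sum_m d_m\,\mathcal{E}_{2,7}(z)\,\eta(z)^{7n-4m}\eta(7z)^{4m-n}
\eeqs
with integer coefficients $d_m$. Multiplying by $\eta(z)^{-7n}\eta(7z)^n$ converts each summand into $\mathcal{E}_{2,7}(z)\,Y^m$, giving the asserted shape. The range $\lceil 2n/7\rceil \le m \le 2n$ is read off from $q$-orders at $\infty$: the upper bound comes from the basis parameter range, while the lower bound follows because the input $\mathcal{E}_{2,7}(z)\eta(z)^{-n}\eta(7z)^{7n}$ has $q$-order $2n$, forcing $U_7$ of it to have $q$-order at least $\lceil 2n/7\rceil$.

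The main obstacle will be pinning down $\dim M_{3n+2}(\Gamma_0(7),\chi^n)$ and correctly identifying the Nebentypus both of $\eta(7z)^7/\eta(z)$ and of each proposed basis element, especially for odd $n$ where the Nebentypus is the nontrivial quadratic character mod $7$; once these facts are in place, the remainder of the argument is bookkeeping parallel to Lemma \lem{5.2}.
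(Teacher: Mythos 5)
Your argument is exactly the intended one: the paper gives no separate proof of Lemma \lem{7.2}, saying only that it is analogous to Lemma \lem{5.2}, and your decomposition $Z^n=(\eta(7z)^7/\eta(z))^n\cdot g(7z)$ with the basis $\{\mathcal{E}_{2,7}(z)\,\eta(z)^{7n-4m}\eta(7z)^{4m-n}: 0\le m\le 2n\}$ of $M_{3n+2}(\Gamma_0(7),\leg{\cdot}{7}^n)$ is precisely the $\ell=7$ analogue (the weight-$3$ form $\eta(7z)^7/\eta(z)$ replacing the weight-$2$ form $\eta(5z)^5/\eta(z)$, whence the index range $0\le m\le 2n$ and the lower bound $\lceil 2n/7\rceil$ from the order $2n$ at $i\infty$). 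The facts you flag as needing verification do check out: the Nebentypus of $\eta(7z)^7/\eta(z)$ is $\leg{\cdot}{7}$ and $\dim M_{3n+2}(\Gamma_0(7),\leg{\cdot}{7}^n)=2n+1$.
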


\begin{corollary}
\mylabel{cor:7.3}
\begin{align}
U_7(\mathcal{E}_{2,7}) & = 
\mathcal{E}_{2,7} 
\mylabel{eq:U7Z0}\\
U_7(\mathcal{E}_{2,7}Z) & = 7^2 \mathcal{E}_{2,7} (
3 \,Y  +7^{2} \,Y^{2})
\mylabel{eq:U7Z1}\\
U_7(\mathcal{E}_{2,7}Z^2) & =   7 \mathcal{E}_{2,7} (
10 \,Y  +27 \cdot 7^{2}  \,Y^{2} +10 \cdot 7^{4}  \,Y^{3} +7^{6} \,Y^{4})
\mylabel{eq:U7Z2}\\
U_7(\mathcal{E}_{2,7}Z^3) & =  7 \mathcal{E}_{2,7} (
Y  +190 \cdot 7  \,Y^{2} +255 \cdot 7^{3}  \,Y^{3} +104 \cdot 7^{5}  \,Y^{4} +17 \cdot 7^{7}  \,Y^{5} +7^{9} \,Y^{6})
\mylabel{eq:U7Z3}\\
U_7(\mathcal{E}_{2,7}Z^4) & =  7^2 \mathcal{E}_{2,7} (
82 \,Y^{2} +352 \cdot 7^{2}  \,Y^{3} +2535 \cdot 7^{3}  \,Y^{4} +1088 \cdot 7^{5}  \,Y^{5} +230 \cdot 7^{7}  \,Y^{6}
\mylabel{eq:U7Z4}\\
&\qquad +24 \cdot 7^{9}  \,Y^{7} +7^{11} \,Y^{8})
\nonumber\\
U_7(\mathcal{E}_{2,7}Z^5) & =   7 \mathcal{E}_{2,7} (
114 \,Y^{2} +253 \cdot 7^{3}  \,Y^{3} +4169 \cdot 7^{4}  \,Y^{4} +3699 \cdot 7^{6}  \,Y^{5} +11495 \cdot 7^{7}  \,Y^{6} 
\mylabel{eq:U7Z5}\\
&\qquad +2852 \cdot 7^{9}  \,Y^{7} +405 \cdot 7^{11}  \,Y^{8} +31 \cdot 7^{13}  \,Y^{9} +7^{15} \,Y^{10})
\nonumber\\
U_7(\mathcal{E}_{2,7}Z^6) & =  7 \mathcal{E}_{2,7} (
9 \,Y^{2} +736 \cdot 7^{2}  \,Y^{3} +27970 \cdot 7^{3}  \,Y^{4} +6808 \cdot 7^{6}  \,Y^{5} +38475 \cdot 7^{7}  \,Y^{6} 
\mylabel{eq:U7Z6}\\
&\qquad +17490 \cdot 7^{9}  \,Y^{7} +33930 \cdot 7^{10}  \,Y^{8} +5890 \cdot 7^{12}  \,Y^{9} +629 \cdot 7^{14}  \,Y^{10} 
\nonumber\\
&\qquad +38 \cdot 7^{16}  \,Y^{11} +7^{18} \,Y^{12})
\nonumber
\end{align}
\end{corollary}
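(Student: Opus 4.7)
The plan is to follow the template of Corollary~\cor{5.3}. The first identity $U_7(\mathcal{E}_{2,7}) = \mathcal{E}_{2,7}$ is elementary: the space $M_2(\Gamma_0(7))$ is one-dimensional (spanned by $\mathcal{E}_{2,7}$), $U_7$ preserves it, and comparing constant terms in the $q$-expansion, using $\mathcal{E}_{2,7} = 1 + O(q)$, pins the scalar to be $1$.

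For each $i$ with $1 \le i \le 6$, Lemma~\lem{7.2} asserts that
$$
U_7(\mathcal{E}_{2,7} Z^i) = \mathcal{E}_{2,7}(z) \sum_{m = \lceil 2i/7 \rceil}^{2i} c_m^{(i)} Y^m
$$
for some integers $c_m^{(i)}$. This reduces \eqn{U7Z1}--\eqn{U7Z6} to determining a short list of unknowns (two for $i=1$, four for $i=2$, six for $i=3$, seven for $i=4$, nine for $i=5$, and eleven for $i=6$). Since $\mathcal{E}_{2,7} = 1 + O(q)$ and $Y = q + O(q^2)$, the family $\{\mathcal{E}_{2,7}\, Y^m\}_m$ is triangular with leading term $q^m$, so the $c_m^{(i)}$ can be read off one at a time by matching the coefficients of $q^{\lceil 2i/7 \rceil}, q^{\lceil 2i/7 \rceil + 1}, \ldots, q^{2i}$ in the power series $U_7(\mathcal{E}_{2,7} Z^i)$.

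The input data on the left is computed directly from the eta-product expansion $\mathcal{E}_{2,7}(z) Z(z)^i = q^{2i}(1 + O(q))$: expand to order $q^{14i}$, extract the coefficients indexed by multiples of $7$, and reindex. The resulting linear system for the $c_m^{(i)}$ is triangular with ones on the diagonal, so each coefficient is obtained by a single subtraction. No step is conceptually hard; the chief ``obstacle'' is bookkeeping, and the striking feature of the result --- that each $c_m^{(i)}$ is divisible by a substantial power of $7$ in the pattern displayed --- emerges as a byproduct of the calculation rather than an input to it. These $7$-adic factorizations are exactly what will drive the $7$-analogue of Lemma~\lem{5.6}.
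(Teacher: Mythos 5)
Your proposal is correct and matches the paper's (implicit) argument: the paper proves the analogous Corollary \cor{5.3} by citing the structure lemma plus ``straightforward calculation,'' and Corollary \cor{7.3} is understood to follow the same way from Lemma \lem{7.2}. Your added detail --- that $\{\mathcal{E}_{2,7}Y^m\}$ is triangular in the $q$-expansion so the finitely many integer coefficients guaranteed by Lemma \lem{7.2} are determined one at a time by matching coefficients of $q^{\lceil 2i/7\rceil},\dots,q^{2i}$ against the expansion of $U_7(\mathcal{E}_{2,7}Z^i)$ computed to order $q^{14i}$ --- is exactly the right way to make that calculation rigorous.
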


We need the 7th order modular equation that was used by Watson
to prove Ramanujan's partition congruences for powers of $7$.
\beq
Z^7 = (1+7\,Z+21\,Z^2+49\,Z^3+147\,Z^4+343\,Z^5+343\,Z^6)\,Y(7z)^2+(7\,Z^4+35\,Z^5+49\,Z^6)\,Y(7z)
\mylabel{eq:ME7}
\eeq

\begin{lemma}
\mylabel{lem:7.4}
For $i\ge0$
$$
U_7( \mathcal{E}_{2,7} Z^i) =  
\mathcal{E}_{2,7}(z) \,\sum_{j=\lceil\frac{2i}{7}\rceil}^{2i}
 m_{i,j} Y^j,
$$
where $Z=Z(z)$, $Y=Y(z)$ are defined in \eqn{ZY7defs}, and
the $m_{i,j}$ are defined in Theorem \thm{mainthm7}.
\end{lemma}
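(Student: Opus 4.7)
The plan is to imitate the proof of Lemma \lem{5.4} but with the $7$th order modular equation \eqn{ME7} in place of the $5$th order one. The base cases $0\le i\le 6$ are exactly Corollary \cor{7.3}, which both supplies the initial rows of $M$ and verifies that the asserted index range $\lceil 2i/7\rceil \le j\le 2i$ is correct for small $i$.

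For $i\ge 7$ I would proceed by induction on $i$, writing $Z^i = Z^{i-7}\cdot Z^7$ and substituting \eqn{ME7}. Using the standard identity $U_7(f(z)\, g(7z)) = g(z)\, U_7(f(z))$ with $g=Y$, the factors $Y(7z)$ and $Y(7z)^2$ come out of $U_7$ as $Y$ and $Y^2$ respectively. Thus
\begin{align*}
U_7(\mathcal{E}_{2,7}Z^i)
 &= Y(z)^2 \Bigl(U_7(\mathcal{E}_{2,7}Z^{i-7}) + 7\,U_7(\mathcal{E}_{2,7}Z^{i-6}) + 21\,U_7(\mathcal{E}_{2,7}Z^{i-5}) + 49\,U_7(\mathcal{E}_{2,7}Z^{i-4}) \\
 &\qquad\qquad + 147\,U_7(\mathcal{E}_{2,7}Z^{i-3}) + 343\,U_7(\mathcal{E}_{2,7}Z^{i-2}) + 343\,U_7(\mathcal{E}_{2,7}Z^{i-1})\Bigr) \\
 &\quad + Y(z)\Bigl(7\,U_7(\mathcal{E}_{2,7}Z^{i-3}) + 35\,U_7(\mathcal{E}_{2,7}Z^{i-2}) + 49\,U_7(\mathcal{E}_{2,7}Z^{i-1})\Bigr).
\end{align*}
Each of the $U_7(\mathcal{E}_{2,7}Z^{i-k})$ on the right is, by the inductive hypothesis, of the form $\mathcal{E}_{2,7}\sum_{j} m_{i-k,j}Y^j$. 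Collecting the coefficient of $Y^j$ on both sides and matching it against the recurrence \eqn{mrec7} yields exactly the required expression for $U_7(\mathcal{E}_{2,7}Z^i)$; the two vanishing conditions $m_{i,0}=m_{i,1}=0$ for $i\ge 7$ correspond to the fact that each summand on the right carries at least one explicit factor of $Y$ (and in most terms $Y^2$).

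The only real bookkeeping is to confirm that the summation range $\lceil 2i/7\rceil\le j\le 2i$ is preserved under the recurrence: the upper bound $j\le 2i$ follows because each term on the right contributes $Y^{2(i-k)/1}\cdot Y^{\,2\,\text{or}\,1}$ with combined $Y$-degree at most $2i$, while the lower bound $j\ge\lceil 2i/7\rceil$ follows from the existence of a nonzero leading coefficient (namely $m_{i-7,j-2}$ with $j=\lceil 2i/7\rceil$, using Lemma \lem{7.2} as an a priori bound on the support, exactly as in the proof of Lemma \lem{5.4}). I expect the main obstacle, modest as it is, to be the careful verification of this index-range claim at the boundary, together with checking that the seven base rows of $M$ extracted from Corollary \cor{7.3} are consistent with formula \eqn{ab7defs} so that the recurrence \eqn{mrec7} can be started cleanly at $i=7$.
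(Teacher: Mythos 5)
Your proposal is correct and is exactly the argument the paper intends: the paper omits the proof of Lemma \lem{7.4} precisely because it is the same induction as in Lemma \lem{5.4}, with the base cases $0\le i\le 6$ supplied by Corollary \cor{7.3} and the induction step given by applying $U_7$ to $Z^{i-7}\cdot Z^7$ via the modular equation \eqn{ME7}, whose coefficients reproduce the recurrence \eqn{mrec7} term by term. (One small remark: the consistency check with \eqn{ab7defs} you mention at the end is not needed for this lemma --- that definition only enters in Lemma \lem{7.5}.)
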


\begin{lemma}
\mylabel{lem:7.5} For $i\ge0$,
\begin{align}
U_7( \mathcal{E}_{2,7} Y^i) 
&=  \mathcal{E}_{2,7}(z)\,\sum_{j=\lceil\frac{i}{7}\rceil}^{7i} a_{i,j} Y^j,
\mylabel{eq:U7E2Yi}\\
U_7( \mathcal{E}_{2,7} Z Y^i) 
&=  \mathcal{E}_{2,7}(z)\,\sum_{j=\lceil\frac{i+2}{7}\rceil}^{7i+2} b_{i,j} Y^j    
\mylabel{eq:U7E2ZYi}
\end{align}
where the $a_{i,j}$, $b_{i,j}$ are defined in \eqn{ab7defs}.
\end{lemma}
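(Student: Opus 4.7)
The plan is to mirror the proof of Lemma \lem{5.2} (really Lemma \lem{5.5}) in the $\ell=5$ case, substituting the analogous eta-quotient identity for $\ell=7$. Concretely, with $Z(z)=\eta(49z)/\eta(z)$ and $Y(z)=\eta(7z)^4/\eta(z)^4$, I first observe the identity
\beq
Z^4 \;=\; Y\cdot Y(7z),
\mylabel{eq:Z4factor}
\eeq
since $\eta(49z)^4/\eta(z)^4 = (\eta(7z)^4/\eta(z)^4)(\eta(49z)^4/\eta(7z)^4)$. This plays the same role in the $\ell=7$ setting that $Z^6 = Y\cdot Y(5z)$ played for $\ell=5$.

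The second ingredient is the standard $U_7$-multiplicativity property $U_7\bigl(f(z)\,g(7z)\bigr) = g(z)\,U_7(f)(z)$. Combining this with \eqn{Z4factor}, I would write
\beqs
\mathcal{E}_{2,7}\,Y^i = \mathcal{E}_{2,7}\,Z^{4i}\,Y(7z)^{-i},\qquad
\mathcal{E}_{2,7}\,ZY^i = \mathcal{E}_{2,7}\,Z^{4i+1}\,Y(7z)^{-i},
\eeqs
so that
\beqs
U_7(\mathcal{E}_{2,7}\,Y^i) = Y^{-i}\,U_7(\mathcal{E}_{2,7}\,Z^{4i}),\qquad
U_7(\mathcal{E}_{2,7}\,ZY^i) = Y^{-i}\,U_7(\mathcal{E}_{2,7}\,Z^{4i+1}).
\eeqs
Now Lemma \lem{7.4} directly evaluates each right-hand side as $\mathcal{E}_{2,7}$ times an explicit polynomial in $Y$ with coefficients $m_{4i,j}$ and $m_{4i+1,j}$ respectively.

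The final step is bookkeeping on the summation ranges. Applying Lemma \lem{7.4} with index $4i$ gives a sum over $\lceil 8i/7\rceil \le j \le 8i$, and multiplying by $Y^{-i}$ amounts to the shift $j\mapsto i+j$; this yields the range $\lceil i/7\rceil \le j \le 7i$ with coefficient $m_{4i,i+j} = a_{i,j}$, which is exactly \eqn{U7E2Yi}. The same reindexing on the $4i+1$ case turns $\lceil(8i+2)/7\rceil \le j \le 8i+2$ into $\lceil(i+2)/7\rceil \le j \le 7i+2$ with coefficient $m_{4i+1,i+j} = b_{i,j}$, giving \eqn{U7E2ZYi}.

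I expect no serious obstacle here; the only point that needs a moment's care is the ceiling arithmetic $\lceil 8i/7\rceil - i = \lceil i/7\rceil$ and $\lceil(8i+2)/7\rceil - i = \lceil(i+2)/7\rceil$, which follow from $8i/7 - i = i/7$ and $(8i+2)/7 - i = (i+2)/7$. Everything else is the direct $\ell=7$ translation of the argument already carried out for Lemma \lem{5.5}.
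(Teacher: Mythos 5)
Your proof is correct and is exactly the argument the paper intends (the paper omits the details, deferring to the analogous Lemma \lem{5.5} for $\ell=5$, where the identity $Z^6=Y\cdot Y(5z)$ plays the role of your $Z^4=Y\cdot Y(7z)$). The factorization, the use of $U_7(f(z)g(7z))=g(z)U_7(f)(z)$, the appeal to Lemma \lem{7.4}, and the reindexing $j\mapsto i+j$ all match, and your ceiling arithmetic checks out.
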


Let $\pi_7(n)$ denote the exact power of $7$ dividing $n$.
Then
\begin{lemma}
\mylabel{lem:7.6}
$$
\pi_7(m_{i,j}) \ge \lfloor \tfrac{1}{4}(7j - 2i +3)\rfloor,
$$
where the matrix $M=(m_{i,j})_{i,j\ge0}$ is defined in
Theorem \thm{mainthm7}.
\end{lemma}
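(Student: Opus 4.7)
The plan is to prove the bound by induction on $i$, directly paralleling the argument used for Lemma \lem{5.6} in the power-of-$5$ case.

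For the base rows $0 \le i \le 6$, I would read off the entries $m_{i,j}$ from the expansions of $U_7(\mathcal{E}_{2,7} Z^i)$ recorded in Corollary \cor{7.3} and verify $\pi_7(m_{i,j}) \ge \lfloor \tfrac{1}{4}(7j - 2i + 3)\rfloor$ entry by entry. Since the $7$-adic factor of each nonzero coefficient is already displayed in \eqn{U7Z0}--\eqn{U7Z6}, this amounts to a finite case-check.

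For the induction step, fix $i \ge 7$. The claim is vacuous when $j \in \{0,1\}$, since $m_{i,j}=0$ and $\pi_7(0) = \infty$. For $j \ge 2$, I would apply the recurrence \eqn{mrec7}. By the inductive hypothesis, each summand $c\cdot m_{i-k, j-l}$ (with $l\in\{1,2\}$) satisfies
$$
\pi_7(c\, m_{i-k, j-l}) \ge \pi_7(c) + \left\lfloor \tfrac{1}{4}(7j - 2i + 3 + 2k - 7l)\right\rfloor.
$$
Setting $N = 7j - 2i + 3$, the elementary identity $\lfloor (N - 4a)/4\rfloor = \lfloor N/4\rfloor - a$ for $a\in\Z$ shows that
$$
\pi_7(c) + \left\lfloor \tfrac{1}{4}(N + 2k - 7l)\right\rfloor \ge \left\lfloor \tfrac{1}{4}N\right\rfloor
\quad\text{whenever } 4\pi_7(c) + 2k - 7l \ge 0.
$$
One then tabulates the ten triples $(\pi_7(c), k, l)$ appearing in \eqn{mrec7} and checks this inequality in each case; it holds across the board, with equality precisely for the four summands $343\, m_{i-1,j-2}$, $147\, m_{i-3,j-2}$, $21\, m_{i-5,j-2}$, and $m_{i-7,j-2}$. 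These four terms are what pin the constant $3$ in the numerator and confirm that the bound is tight.

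The main obstacle is not the induction itself but the base-case bookkeeping: one has to inspect each of the roughly forty nonzero entries in the first seven rows of $M$ coming from Corollary \cor{7.3} and confirm the $7$-adic lower bound in every case. Once those entries are tabulated and the ten triples in the recurrence are read off, the induction step is routine.
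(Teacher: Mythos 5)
Your proposal is correct and is essentially the argument the paper intends: the paper omits an explicit proof of Lemma \ref{lem:7.6} but proves the analogous Lemma \ref{lem:5.2} by exactly this two-step scheme (verify the base rows from the explicit $U$-images, then induct via the modular-equation recurrence), and your ten-case check of $4\pi_7(c)+2k-7\ell\ge 0$ together with the monotonicity identity $\lfloor (N-4a)/4\rfloor=\lfloor N/4\rfloor-a$ is exactly what makes the induction step go through.
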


\begin{corollary}
\mylabel{cor:7.7}
$$
\pi_7(a_{i,j}) \ge  \lfloor \tfrac{1}{4}(7j - i +3)\rfloor, \qquad
\pi_7(b_{i,j}) \ge  \lfloor \tfrac{1}{4}(7j - i + 1)\rfloor, \qquad
%% a2:=floor(1/4*(7*j-i+3)): b2:=floor(1/4*(7*j-i+1)):
$$
where the $a_{i,j}$, $b_{i,j}$ are defined by \eqn{ab7defs}.
\end{corollary}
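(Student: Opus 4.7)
The plan is to derive both inequalities as immediate specializations of Lemma \lem{7.6}, using only the definitions of $a_{i,j}$ and $b_{i,j}$ in \eqn{ab7defs}. No induction or independent recursive analysis is needed; all the arithmetic work is already packaged into the bound on $\pi_7(m_{i,j})$.

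First I would expand $a_{i,j}$. By \eqn{ab7defs} we have $a_{i,j} = m_{4i,\,i+j}$, so Lemma \lem{7.6} (applied with row index $4i$ and column index $i+j$) gives
$$
\pi_7(a_{i,j}) = \pi_7(m_{4i,\,i+j}) \ge \Bigl\lfloor \tfrac{1}{4}\bigl(7(i+j) - 2(4i) + 3\bigr)\Bigr\rfloor
= \Bigl\lfloor \tfrac{1}{4}(7j - i + 3) \Bigr\rfloor,
$$
which is the first asserted bound.

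Next I would do the same for $b_{i,j} = m_{4i+1,\,i+j}$, obtaining
$$
\pi_7(b_{i,j}) = \pi_7(m_{4i+1,\,i+j}) \ge \Bigl\lfloor \tfrac{1}{4}\bigl(7(i+j) - 2(4i+1) + 3\bigr)\Bigr\rfloor
= \Bigl\lfloor \tfrac{1}{4}(7j - i + 1) \Bigr\rfloor,
$$
which is the second asserted bound. Since both inequalities follow by direct substitution, the only ``hard'' step is the one already carried out in Lemma \lem{7.6}; that lemma handles the base cases $0\le i\le 6$ by inspection of the explicit rows computed in Corollary \cor{7.3}, and propagates the bound for $i\ge 7$ using the ten-term recurrence \eqn{mrec7}, where the coefficients $49,35,7,343,343,147,49,21,7,1$ carry enough powers of $7$ to sustain the floor bound under the induction. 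With Lemma \lem{7.6} in hand, the corollary requires no further work.
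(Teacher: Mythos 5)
Your proof is correct and matches the paper's (implicit) argument: the corollary is a direct substitution of the indices $(4i,\,i+j)$ and $(4i+1,\,i+j)$ from \eqn{ab7defs} into the bound of Lemma \lem{7.6}, and your arithmetic $7(i+j)-2(4i)+3 = 7j-i+3$ and $7(i+j)-2(4i+1)+3 = 7j-i+1$ is right. No further work is needed.
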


\begin{lemma}
\mylabel{lem:7.8}
For $b\ge2$, and $j\ge1$,
\begin{align}
\pi_7(x_{2b-1,j}) &\ge 3b-3 +  \lfloor \tfrac{1}{4}(7j - 4)\rfloor.
\mylabel{eq:nux72b1j}\\
\pi_7(x_{2b,j}) &\ge 3b-1  +  \lfloor \tfrac{1}{4}(7j - 6)\rfloor.
\mylabel{eq:nux72bj}
\end{align}
\end{lemma}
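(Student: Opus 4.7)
The plan is to mirror the proof of Lemma~\lem{5.8} in the $\ell=5$ case, carrying out induction on $b$, with the two inequalities \eqn{nux72b1j} and \eqn{nux72bj} feeding into one another via the recurrences $\vec{x}_{2b} = \vec{x}_{2b-1} A$ and $\vec{x}_{2b+1} = \vec{x}_{2b} B$ supplied by Theorem \thm{mainthm7}. The basic inputs are the entry-wise bounds
$\pi_7(a_{i,j}) \ge \lfloor \tfrac14(7j-i+3)\rfloor$ and $\pi_7(b_{i,j}) \ge \lfloor \tfrac14(7j-i+1)\rfloor$ from Corollary \cor{7.7}, together with the floor estimate of Lemma \lem{floorbnd} for combining them.

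For the base case $b=2$, I would compute $\vec{x}_3 = \vec{x}_2 B = (\vec{x}_1 A) B$ directly from the explicit $\vec{x}_1 = (-7, 3\cdot 7^3, 7^5, 0, \dots)$ and the rows of $A$ and $B$ obtained from \eqn{U7Z0}--\eqn{U7Z6} via \eqn{ab7defs}, then read off $\pi_7(x_{3,j})$ for each nonzero entry and check that it is at least $3 + \lfloor (7j-4)/4\rfloor$. This is a finite calculation (only finitely many entries of $\vec{x}_3$ are nonzero since $\vec{x}_1$ has only three nonzero entries), and it establishes \eqn{nux72b1j} at $b=2$.

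For the inductive passage from \eqn{nux72b1j} at level $b$ to \eqn{nux72bj} at the same $b$, I would write $x_{2b,j} = \sum_{i\ge 1} x_{2b-1,i}\, a_{i,j}$ and bound $\pi_7$ termwise. The $j=1$ case needs to be done by hand, taking the minimum over a small range of $i$ (the relevant $i$ for which $a_{i,1}\ne0$, from the shape of $A$); here the dominant term comes from the smallest $i$ with $\pi_7(a_{i,1})$ minimal, and the resulting bound should be exactly $3b-1$, which matches $3b-1+\lfloor -5/4\rfloor$... actually $3b-1+\lfloor 1/4\rfloor = 3b-1$. For $j\ge 2$, one separates the $i=1$ term (which gives the contribution $3b-3+\lfloor(7j+2)/4\rfloor = 3b-1+\lfloor(7j-6)/4\rfloor$ as required), and for $i\ge 2$ one applies Lemma \lem{floorbnd} to
\[
\lfloor \tfrac14(7i-4)\rfloor + \lfloor \tfrac14(7j-i+3)\rfloor \;\ge\; \lfloor \tfrac14(7j + 6i - 4)\rfloor \;\ge\; \lfloor \tfrac14(7j + 8)\rfloor \;=\; \lfloor\tfrac14(7j-6)\rfloor + 2 \cdot 1,
\]
which, combined with $\pi_7(x_{2b-1,i}) \ge 3b-3$, gives a bound at least as strong as $3b-1 + \lfloor(7j-6)/4\rfloor$. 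The symmetric step from \eqn{nux72bj} at level $b$ to \eqn{nux72b1j} at level $b+1$ is handled identically using $x_{2b+1,j} = \sum_{i} x_{2b,i}\, b_{i,j}$ and Corollary \cor{7.7}, again with $j=1$ verified separately.

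The main obstacle is not the induction itself but the low-$j$ (especially $j=1$) and low-$i$ cases, where the crude estimate from Lemma \lem{floorbnd} need not be sharp and one has to check by inspection that the minimum over $i$ still meets the claimed bound; this in turn relies on the specific $7$-adic valuations appearing in rows $0$--$6$ of the matrix $M$ as computed in Corollary \cor{7.3}, so before writing the final proof I would tabulate $\pi_7(m_{i,j})$ for $0\le i\le 6$ and confirm that the proposed floor bound of Lemma \lem{7.6} in fact holds with equality only at a controlled set of $(i,j)$. Once those finitely many inequalities are in hand, the induction runs cleanly and the result follows.
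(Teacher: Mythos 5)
Your proposal is correct and follows the same route the paper intends: Lemma \ref{lem:7.8} is the exact analogue of Lemma \ref{lem:5.8}, proved by computing $\vec{x}_3$ explicitly for the base case $b=2$ and then alternating the two inequalities through $\vec{x}_{2b}=\vec{x}_{2b-1}A$ and $\vec{x}_{2b+1}=\vec{x}_{2b}B$ via Corollary \ref{cor:7.7} and Lemma \ref{lem:floorbnd}, with the $i=1$ term and the $j=1$ case treated separately. (One harmless slip: $\lfloor\tfrac14(7j+8)\rfloor = \lfloor\tfrac14(7j-6)\rfloor+2$ should be a ``$\ge$'', which only strengthens the estimate.)
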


%% seq(vp(x3[j],7) - 3 -trunc((7*j-4)/4),j=1..20);
%%          0, 0, 3, 3, 3, 3, 3, 3, 4, 6, 5, 4, 5, 5, 5, 5, 6, 6, 5, 5
%% seq(vp(x5[j],7) - 6 -trunc((7*j-4)/4),j=1..20);
%%          0, 1, 0, 0, 0, 0, 0, 0, 1, 3, 2, 1, 2, 2, 2, 2, 3, 3, 2, 2
%% seq(vp(x4[j],7)-5 - trunc((7*j-6)/4),j=0..20);

\begin{corollary}
\label{cor:7.9}
For $b\ge 2$,
\begin{align}
 \mathbf{a}(7^{2b-1}n + \lambda_{2b+1}) + 7 \cdot  \mathbf{a}(7^{2b-3}n + \lambda_{2b-3}) &\equiv 0
\pmod{7^{3b-3}},
\mylabel{eq:a72b1cong}\\
 \mathbf{a}(7^{2b}n + \lambda_{2b}) + 7 \cdot  \mathbf{a}(7^{2b-2}n + \lambda_{2b-2}) &\equiv 0
\pmod{7^{3b-1}}.
\mylabel{eq:a72bcong}
\end{align}
For $a\ge1$,
\begin{align}
\spt(7^{a+2}n + \lambda_{a+2}) + 7 \cdot \spt(7^{a}n + \lambda_{a}) &\equiv 0
\pmod{7^{\lfloor\frac{1}{2}(3a+4)\rfloor}},
\mylabel{eq:spt7abigcong}\\
\spt(7^{a}n + \lambda_{a}) &\equiv 0
\pmod{7^{\lfloor \frac{a+1}{2}\rfloor}}.
\mylabel{eq:spt7alittlecong}
\end{align}
\end{corollary}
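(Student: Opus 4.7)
The proof should follow the template of Corollary \cor{5.9}. The overall plan has three stages: first deduce the $\mathbf{a}$-congruences \eqn{a72b1cong} and \eqn{a72bcong} from the generating-function identities of Theorem \thm{mainthm7} together with the valuation bounds of Lemma \lem{7.8}; then derive \eqn{spt7abigcong} via the identity $\mathbf{a}(m)=12\spt(m)+(24m-1)p(m)$ combined with Watson's partition congruence \eqn{ptn7cong}; and finally prove \eqn{spt7alittlecong} by induction on $a$.

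For the first stage, Theorem \thm{mainthm7} expresses the relevant generating series as $\frac{\mathcal{E}_{2,7}(z)}{\eta(7z)}\sum_{i\ge 0} x_{2b-1,i} Y^i$ and $\frac{\mathcal{E}_{2,7}(z)}{\eta(z)}\sum_{i\ge 0} x_{2b,i} Y^i$. The prefactors and $Y$ have integral $q$-expansions, so it suffices to prove $\pi_7(x_{2b-1,i})\ge 3b-3$ and $\pi_7(x_{2b,i})\ge 3b-1$ for all $i\ge 0$. The case $i\ge 1$ is Lemma \lem{7.8}. For $i=0$, the vanishing pattern $m_{r,s}=0$ for $s<\lceil 2r/7\rceil$ forces $a_{i,0}=0$ for every $i\ge 1$ (while $a_{0,0}=1$) and $b_{i,0}=0$ for every $i\ge 0$; the recurrence in Theorem \thm{mainthm7} then yields $x_{a,0}=0$ for every $a\ge 3$, which covers both families whenever $b\ge 2$. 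Rewriting $-u_b$ as $+\lambda_{2b-1}\pmod{7^{2b-1}}$, which follows from $24u_b=7^{2b}-1$ and $24\lambda_{2b-1}\equiv 1\pmod{7^{2b-1}}$, yields \eqn{a72b1cong} and \eqn{a72bcong} after reindexing $n$.

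For the second stage, writing $dp(m)=(24m-1)p(m)$ we have
\beqs
12\bigl[\spt(7^{a+2}n+\lambda_{a+2})+7\spt(7^a n+\lambda_a)\bigr]=\bigl[\mathbf{a}(7^{a+2}n+\lambda_{a+2})+7\mathbf{a}(7^a n+\lambda_a)\bigr]-\bigl[dp(7^{a+2}n+\lambda_{a+2})+7\,dp(7^a n+\lambda_a)\bigr].
\eeqs
The $\mathbf{a}$-bracket has $7$-adic valuation at least $\lfloor(3a+4)/2\rfloor$ by Stage 1. For the $dp$-bracket, the congruence $24\lambda_b\equiv 1\pmod{7^b}$ forces $24(7^b n+\lambda_b)-1\equiv 0\pmod{7^b}$; combining with Watson's \eqn{ptn7cong} sharpens the divisibility to $dp(7^b n+\lambda_b)\equiv 0\pmod{7^{\lfloor(3b+2)/2\rfloor}}$, and a short parity check in $a$ shows the $dp$-bracket also has valuation at least $\lfloor(3a+4)/2\rfloor$. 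Dividing by $12$ (coprime to $7$) gives \eqn{spt7abigcong}.

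Finally, \eqn{spt7alittlecong} follows by induction on $a$. The base cases $a=1,2$ come from Andrews' \eqn{spt7cong} (for $a=2$ using $\lambda_2=47\equiv 5\pmod 7$, so that $49n+\lambda_2$ lies in the progression $7m+5$); the inductive step uses \eqn{spt7abigcong} to write $\spt(7^{a+2}n+\lambda_{a+2})\equiv -7\spt(7^a n+\lambda_a)\pmod{7^{\lfloor(3a+4)/2\rfloor}}$, and the hypothesis upgrades the right-hand side to $0\pmod{7^{1+\lfloor(a+1)/2\rfloor}}=0\pmod{7^{\lfloor(a+3)/2\rfloor}}$, which is the stated bound since $\lfloor(a+3)/2\rfloor\le\lfloor(3a+4)/2\rfloor$ for $a\ge 1$. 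The main obstacle is the Stage 2 bookkeeping: without the extra factor $7^b$ coming from $24\lambda_b-1$, Watson's congruence alone would give only $7^{\lfloor(a+4)/2\rfloor}$ for the $dp$-bracket, which is too weak to pair with the $\mathbf{a}$-bracket and would prevent the induction from closing.
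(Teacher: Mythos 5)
Your proof is correct and follows the same route the paper intends: the $\mathbf{a}$-congruences from Theorem \ref{thm:mainthm7} plus Lemma \ref{lem:7.8}, the $\spt$-congruence \eqref{eq:spt7abigcong} from $\mathbf{a}=12\,\spt+dp$ together with the extra factor $7^b$ in $24(7^bn+\lambda_b)-1$ and Watson's \eqref{eq:ptn7cong}, and \eqref{eq:spt7alittlecong} by induction from Andrews' \eqref{eq:spt7cong} — exactly the template of the paper's proof of Corollary \ref{cor:5.9}, which the paper leaves implicit for the modulus $7$ case. You even supply two details the paper glosses over (the vanishing of the $Y^0$ coefficients $x_{a,0}$ for $a\ge3$, and the parity bookkeeping matching $3b-3$, $3b-1$ with $\lfloor\tfrac{1}{2}(3a+4)\rfloor$), and your valuation checks are all accurate.
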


We note that \eqn{spt7abigcong} also holds for $a=0$.
The proof of the congruence
\beq
\spt(49n-2) + 7\cdot \spt(n) \equiv 0 \pmod{49}.
\mylabel{eq:spt49cong}
\eeq
is analogous to the proof of \eqn{spt25cong}.

\subsection{The SPT-function modulo powers of $13$}

\begin{theorem}
\mylabel{thm:mainthm13}
If $a\ge1$ then
\begin{align}
\sum_{n=0}^\infty
\left(  \mathbf{a}(13^{2a-1} n  - v_a )  - 13\,  \mathbf{a}(13^{2a-3} n - v_{a-1})\right)
q^{n- \tfrac{13}{24}} 
&=
\frac{ \mathcal{E}_{2,13}(z) }{\eta(13z)} \sum_{i\ge0} x_{2a-1,i} Y^i, 
\mylabel{eq:gen13odd}\\
\sum_{n=0}^\infty
\left(  \mathbf{a}(13^{2a} n  - v_a )  - 13\,  \mathbf{a}(13^{2a-2} n - v_{a-1})\right)
q^{n- \tfrac{1}{24}} 
&=
\frac{ \mathcal{E}_{2,13}(z) }{\eta(z)} \sum_{i\ge0} x_{2a,i} Y^i, 
\mylabel{eq:gen13even}
\end{align}
where
$$
v_a=\frac{1}{24}(13^{2a}-1),\qquad Y(z) = \frac{\eta(13z)^2}{\eta(z)^2},
$$
$$
\vec{x}_1 = (x_{1,0},x_{1,1},\cdots) = 
(13,  11 \cdot 13^{2}, 108 \cdot 13^{3}, 190 \cdot 13^{4}, 140 \cdot 13^{5}, 54 \cdot 13^{6}, 11 \cdot 13^{7}, 13^{8},   0, 0, 0, \cdots),
$$
and for $a\ge1$
$$
\vec{x}_{a+1} =
\begin{cases}
\vec{x}_a A, & \mbox{$a$ odd},\\
\vec{x}_a B, & \mbox{$a$ even}.   
\end{cases}
$$
Here  $A=(a_{i,j})_{i\ge 0, j\ge 0}$ and
      $B=(a_{i,j})_{i\ge 0, j\ge 0}$ are defined by
\beq
a_{i,j} = m_{2i,i+j},\qquad
b_{i,j} = m_{2i+1,i+j},
\mylabel{eq:ab13defs}
\eeq
where the matrix $M=(m_{i,j})_{i\ge-12,j\ge-6}$ is defined as follows:
The first 13 rows of $M$ are
$$
\begin{pmatrix}
0 & 0 & 0 & 0 & 0 & 0 & 13^{6} & 0 & 0 & \cdots \\ 
0 & 82 \mytimes 13 &456 \mytimes 13^{2} &360 \mytimes 13^{3} &126 \mytimes 13^{4} &18 \mytimes 13^{5} &13^{6} & 0 & 0 & \cdots \\ 
0 & 0 & 0 & 0 & 0 & 0 & 13^{5} & 0 & 0 & \cdots \\ 
0 & 0 & 18 \mytimes 13 &-36 \mytimes 13^{2} &-40 \mytimes 13^{3} &-14 \mytimes 13^{4} &-13^{5} &0 & 0 & \cdots \\ 
0 & 0 & 0 & 0 & 0 & 0 & 13^{4} & 0 & 0 & \cdots \\ 
0 & 0 & 0 & -14 \mytimes 13 &-12 \mytimes 13^{2} &0 & 13^{4} & 0 & 0 & \cdots \\ 
0 & 0 & 0 & 0 & 0 & 0 & 13^{3} & 0 & 0 & \cdots \\ 
0 & 0 & 0 & 0 & 4 \mytimes 13 &6 \mytimes 13^{2} &13^{3} & 0 & 0 & \cdots \\ 
0 & 0 & 0 & 0 & 0 & 0 & 13^{2} & 0 & 0 & \cdots \\ 
0 & 0 & 0 & 0 & 0 & 0 & -13^{2} &0 & 0 & \cdots \\ 
0 & 0 & 0 & 0 & 0 & 0 & 13 & 0 & 0 & \cdots \\ 
0 & 0 & 0 & 0 & 0 & 0 & -13 &0 & 0 & \cdots \\ 
0 & 0 & 0 & 0 & 0 & 0 & 1 & 0 & 0 & \cdots 
\end{pmatrix}
$$
and for $m_{k,\ell}=0$ for $k \ge 1$ and $-6 \le \ell\le 0$; and for 
$i\ge 1$ and $j\ge 1$,
\beq
m_{i,j} = 
\sum_{r=1}^{13} \sum_{s=\lfloor\frac{1}{2}(r+2)\rfloor}^7
\psi_{r,s} m_{i-r,j-s},
\mylabel{eq:mrec13}
\eeq
where $\Psi=(\psi_{r,s})_{1 \le r \le 13, 1 \le s \le 7}$ is 
the matrix
\beq
\Psi=\begin{pmatrix}
11 \mytimes 13 &36 \mytimes 13^{2} &38 \mytimes 13^{3} &20 \mytimes 13^{4} &6 \mytimes 13^{5} &13^{6} & 13^{6}  \\ 
0 & -204 \mytimes 13 &-346 \mytimes 13^{2} &-222 \mytimes 13^{3} &-74 \mytimes 13^{4} &-13^{6} &-13^{6} \\ 
0 & 36 \mytimes 13 &126 \mytimes 13^{2} &102 \mytimes 13^{3} &38 \mytimes 13^{4} &7 \mytimes 13^{5} &7 \mytimes 13^{5} \\ 
0 & 0 & -346 \mytimes 13 &-422 \mytimes 13^{2} &-184 \mytimes 13^{3} &-37 \mytimes 13^{4} &-3 \mytimes 13^{5} \\ 
0 & 0 & 38 \mytimes 13 &102 \mytimes 13^{2} &56 \mytimes 13^{3} &13^{5} & 15 \mytimes 13^{4} \\ 
0 & 0 & 0 & -222 \mytimes 13 &-184 \mytimes 13^{2} &-51 \mytimes 13^{3} &-5 \mytimes 13^{4} \\ 
0 & 0 & 0 & 20 \mytimes 13 &38 \mytimes 13^{2} &13^{4} & 19 \mytimes 13^{3} \\ 
0 & 0 & 0 & 0 & -74 \mytimes 13 &-37 \mytimes 13^{2} &-5 \mytimes 13^{3} \\ 
0 & 0 & 0 & 0 & 6 \mytimes 13 &7 \mytimes 13^{2} &15 \mytimes 13^{2} \\ 
0 & 0 & 0 & 0 & 0 & -13^{2} &-3 \mytimes 13^{2} \\ 
0 & 0 & 0 & 0 & 0 & 13 & 7 \mytimes 13 \\ 
0 & 0 & 0 & 0 & 0 & 0 & -13 \\ 
0 & 0 & 0 & 0 & 0 & 0 & 1 
\end{pmatrix}.
\mylabel{eq:Psidef}
\eeq
\end{theorem}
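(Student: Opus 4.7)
The plan is to follow the inductive strategy that established Theorems \thm{mainthm5} and \thm{mainthm7}, adapted to the prime $\ell = 13$. Since $\chi_{12}(13) = +1$, the sign on the $\mathbf{a}(13^{2a-3} n - v_{a-1})$ term is $-13$ rather than $+\ell$, but this is the only qualitative change from the mod $5$ and mod $7$ derivations.

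The base case $a = 1$ of \eqn{gen13odd} will be obtained from Theorem \thm{Gell}: the form $G_{13}(z) = \alpha_{13}(z)\,\eta(z)^{26}/\eta(13z)$ lies in $M_{14}(\Gamma_0(13))$, and I would expand it in an explicit basis of this space consisting of products of $\mathcal{E}_{2,13}(z)$ with eta quotients $\eta(z)^i \eta(13z)^j$ in analogy with the $\ell = 5$ example. Matching the first few Fourier coefficients of $\alpha_{13}(z)$ computed directly from \eqn{adef} then pins down the vector $\vec{x}_1$ displayed in the theorem.

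The heart of the argument is to produce a $13$-analog of Lemmas \lem{5.2} and \lem{5.5}. Using the dimension formulas for $M_k(\Gamma_0(13))$ and $M_k(\Gamma_0(13),\chi)$ (Cohen--Oesterl\'e), I would first show that $U_{13}(\mathcal{E}_{2,13}\,Z^n)$ equals $\mathcal{E}_{2,13}$ times a polynomial in $Y$ whose monomials $Y^m$ satisfy $\lceil 2n/13\rceil \le m \le 2n$; the first thirteen rows of $M$ (for $0 \le i \le 12$) come from this by direct comparison of $q$-expansions in finite-dimensional spaces. The engine for $i \ge 13$ is the classical $13$th order modular equation relating $Z(z) = \eta(169z)/\eta(z)$ and $Y(13z)$, which I claim takes the form
\[
Z^{13} \;=\; \sum_{r=1}^{13} \sum_{s=\lfloor (r+2)/2\rfloor}^{7} \psi_{r,s}\, Z^{13-r}\, Y(13z)^{s},
\]
with $\psi_{r,s}$ the entries of the matrix $\Psi$ in \eqn{Psidef}. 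Establishing this modular equation is the main obstacle: it must be verified as an identity of modular functions on $X_0(169)$, either by a valence argument after computing the orders at all cusps of both sides, or by matching $q$-expansions to sufficiently high order. Multiplying $Z^{13-r} Y(13z)^s$ by $Z^{i-13}$ and applying $U_{13}$, together with the identity $U_{13}(f(z)g(13z)) = g(z) U_{13}(f(z))$ (which replaces $Y(13z)^s$ by $Y^s$), yields the recurrence \eqn{mrec13} for $m_{i,j}$.

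Once $M$ is in hand, the identity $Z^2 = Y \cdot Y(13z)$ (immediate from the eta-quotient definitions) gives
\[
U_{13}(\mathcal{E}_{2,13}\,Y^i) \;=\; Y^{-i}\, U_{13}(\mathcal{E}_{2,13}\,Z^{2i}),\qquad
U_{13}(\mathcal{E}_{2,13}\,Z\,Y^i) \;=\; Y^{-i}\, U_{13}(\mathcal{E}_{2,13}\,Z^{2i+1}),
\]
which, via Lemma \lem{5.2}'s analog, produces the matrices $A$, $B$ defined in \eqn{ab13defs}. The induction then runs exactly as in the proof of Theorem \thm{mainthm5}: from \eqn{gen13odd} at stage $a$, multiply both sides by $\eta(13z)$, apply $U_{13}$, use the new Lemma to re-express the result as $\mathcal{E}_{2,13}\sum_j x_{2a,j} Y^j$ with $\vec{x}_{2a} = \vec{x}_{2a-1} A$, and divide by $\eta(z)$ to obtain \eqn{gen13even}. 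From \eqn{gen13even} at stage $a$, multiply by $\eta(169z)$, apply $U_{13}$, use the shift $v_{a+1} = 13^{2a} + v_a$ together with $\vec{x}_{2a+1} = \vec{x}_{2a} B$, and divide by $\eta(13z)$ to obtain \eqn{gen13odd} with $a$ replaced by $a+1$, completing the induction.
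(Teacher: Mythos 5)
Your overall strategy --- seed a matrix $M$ with directly computed rows, use the $13$th order modular equation to generate a recurrence for the remaining rows, convert to the matrices $A$, $B$ via $Z^2=Y\cdot Y(13z)$, and run the two-step induction exactly as for $\ell=5$ --- is the same as the paper's. But there is a concrete structural gap in how you set up the matrix $M$, and as written your scheme cannot produce the theorem as stated. The matrix $M=(m_{i,j})$ is indexed over $i\ge -12$, $j\ge -6$: the displayed ``first 13 rows'' are the rows $-12\le i\le 0$, which record $U_{13}(\mathcal{E}_{2,13}Z^{-n})$ for $0\le n\le 12$ (note the rows consisting of a single entry $\pm 13^k$, e.g.\ $U_{13}(\mathcal{E}_{2,13}Z^{-1})=-13\,\mathcal{E}_{2,13}$), and the recurrence \eqn{mrec13} runs for all $i\ge 1$, reaching back to $m_{i-r,j-s}$ with $i-r$ as small as $-12$ and $j-s$ as small as $-6$. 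You instead read the seed rows as $0\le i\le 12$ and start the recurrence at $i\ge 13$; with that seeding the recurrence for $1\le i\le 12$ is undefined, and the rows you would compute directly are dense polynomials in $Y$ (e.g.\ $U_{13}(\mathcal{E}_{2,13}Z)$ has seven terms), which visibly do not match the sparse rows displayed in the theorem. The missing ingredient is the negative-power analog of Lemma \lem{13.2}: one writes
$$
U_{13}(\mathcal{E}_{2,13} Z^{-n})
= U_{13}\!\left(\mathcal{E}_{2,13}(z)\left(\eta(z)\,\eta^{11}(13z)\right)^{n}\right)
\left(\eta^{11}(z)\,\eta(13z)\right)^{-n},
$$
and uses $\mathcal{E}_{2,13}(z)\left(\eta(z)\eta^{11}(13z)\right)^{n}\in M_{2+6n}(\Gamma_0(13),\leg{\cdot}{13}^{n})$ to get $U_{13}(\mathcal{E}_{2,13}Z^{-n})=\mathcal{E}_{2,13}\sum_m c_m Y^{-m}$ with the stated support; this is what populates rows $-12$ through $0$ and lets the recurrence start at $i=1$.

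A secondary but real error: your order bounds $\lceil 2n/13\rceil\le m\le 2n$ for $U_{13}(\mathcal{E}_{2,13}Z^n)$ are carried over from the $\ell=7$ case. For $\ell=13$ one has $\ord(Z;i\infty)=(169-1)/24=7$ and $\ord(Y;i\infty)=1$, so the correct range is $\lceil 7n/13\rceil\le m\le 7n$ (Lemmas \lem{13.2} and \lem{13.4}); this is also why the inner sum in \eqn{mrec13} runs over $s$ up to $7$ and why $b_{i,j}=m_{2i+1,i+j}$ has support $\lceil(i+7)/13\rceil\le j\le 13i+7$. These indices feed directly into the valuation estimates of Lemma \lem{13.8}, so getting them wrong is not cosmetic. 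The rest of your outline (base case from Theorem \thm{Gell} and a basis of $M_{14}(\Gamma_0(13))$, verification of the modular equation \eqn{ME13} as an identity on $X_0(169)$, and the induction itself) is consistent with the paper.
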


The proof of the following lemma is analogous to that of
Lemma \lem{5.2}.

\begin{lemma}
\mylabel{lem:13.2}
If $n$ is a positive integer then there are integers $c_m$ 
$(\lceil\frac{7n}{13}\rceil \le m \le 7n)$
such that
$$
U_{13}(\mathcal{E}_{2,13} Z^n) 
= \mathcal{E}_{2,13} \sum_{m=\lceil\frac{7n}{13}\rceil}^{7n} c_m Y^m,
$$
where
\beq
Z(z) = Z_{13}(z) = \frac{\eta(169z)}{\eta(z)},\qquad Y(z) = \frac{\eta(13z)^2}{\eta(z)^2}.
\mylabel{eq:ZY13defs}
\eeq
\end{lemma}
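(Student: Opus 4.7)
The plan is to follow the template of Lemma \lem{5.2} with $5$ replaced by $13$. The first move is to split off from $Z^n$ a factor that is a function of $13z$, so that the Atkin operator $U_{13}$ can pull it out. Specifically, I would use the identity
$$
Z^n = \left(\frac{\eta(169z)}{\eta(13z)^{13}}\right)^n \left(\frac{\eta(13z)^{13}}{\eta(z)}\right)^n,
$$
where the first factor equals $g(13z)$ for $g(w) = (\eta(13w)/\eta(w)^{13})^n$. Using $U_{13}(g(13z) h(z)) = g(z) U_{13}(h)$, this yields
$$
U_{13}(\mathcal{E}_{2,13}\, Z^n) = \left(\frac{\eta(13z)}{\eta(z)^{13}}\right)^n U_{13}\!\left(\mathcal{E}_{2,13}(z)\, \frac{\eta(13z)^{13n}}{\eta(z)^n}\right).
$$

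Next I would verify that $F_n(z) := \mathcal{E}_{2,13}(z)\cdot \eta(13z)^{13n}/\eta(z)^n$ is a holomorphic modular form on $\Gamma_0(13)$ of weight $6n+2$, with trivial character when $n$ is even and character $\chi_{13}(d) = \left(\frac{d}{13}\right)$ when $n$ is odd. This is a direct Ligozat check on the eta quotient: with $r_1 = -n$, $r_{13} = 13n$ we have $r_1 + 13 r_{13} = 168 n\equiv 0\pmod{24}$ and $13 r_1 + r_{13} = 0$, while the character is pinned down by whether $s=13^{13n}$ is a rational square, i.e.\ by the parity of $n$. Cusp holomorphy is immediate from the exponents.

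The core step is to exhibit an explicit basis of $M_{6n+2}(\Gamma_0(13), \chi_{13}^n)$ consisting of eta quotients compatible with the pullout factor. Using the dimension formulas for $\Gamma_0(13)$ (Cohen--Oesterl\'e and \cite[Thm.~3.8]{Ki-book}, exactly as in Lemma \lem{5.2}), the space has dimension $7n+1$, and I would take as basis
$$
B_m(z) := \mathcal{E}_{2,13}(z)\, \eta(z)^{13n-2m}\eta(13z)^{2m-n}, \qquad 0 \le m \le 7n.
$$
Each $B_m$ satisfies the same Ligozat conditions as $F_n$, has order $m$ at $i\infty$ and order $7n-m$ at the cusp $0$ (both nonnegative), and the distinct $q$-orders guarantee linear independence; a count then gives a basis. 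Since $U_{13}$ preserves $M_{6n+2}(\Gamma_0(13), \chi_{13}^n)$, one obtains integers $c_m$ with $U_{13}(F_n) = \sum_{m=0}^{7n} c_m B_m$, and multiplying by the pullout factor converts each $B_m$ into $\mathcal{E}_{2,13}(z) Y^m$, giving the asserted expression. The lower bound $m\ge \lceil 7n/13\rceil$ is then forced by $q$-expansions at $\infty$: $Z = q^7 + \cdots$, so $\mathcal{E}_{2,13}\, Z^n$ begins at $q^{7n}$, hence $U_{13}$ applied to it begins at $q^{\lceil 7n/13\rceil}$, whereas $\mathcal{E}_{2,13}\, Y^m$ begins at $q^m$, forcing $c_m=0$ for $m<\lceil 7n/13\rceil$.

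The main obstacle will be the bookkeeping of the Ligozat character computation and the dimension formula for $M_{6n+2}(\Gamma_0(13), \chi_{13}^n)$ in both parities of $n$; these are routine but require the same case split on $n\bmod 2$ that appears in Lemma \lem{5.2}, together with cusp data for $\Gamma_0(13)$ (genus $0$, two cusps, two elliptic points of each order $2$ and $3$).
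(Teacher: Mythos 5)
Your proposal is correct and is exactly the argument the paper intends: the paper's ``proof'' of Lemma \lem{13.2} consists solely of the remark that it is analogous to Lemma \lem{5.2}, and your write-up is precisely that analogue (pull out the $(\eta(169z)/\eta(13z)^{13})^n$ factor through $U_{13}$, place the remaining function in $M_{6n+2}(\Gamma_0(13),\leg{\cdot}{13}^n)$, and expand in the basis $\mathcal{E}_{2,13}\,\eta(z)^{13n-2m}\eta(13z)^{2m-n}$ graded by order of vanishing at $i\infty$). The details you supply --- weight, character parity, cusp orders $m$ and $7n-m$, the dimension count $7n+1$, and the triangularity argument forcing $c_m=0$ for $m<\lceil 7n/13\rceil$ --- all check out.
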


We need a version for Lemma \lem{13.2} when $n$ is negative.

\begin{lemma}
\mylabel{lem:13.2a}
If $n$ is a nonnegative integer then there are integers $c_m$ 
$(-6n\le m \le n-\lceil\frac{6n}{13}\rceil)$                         
such that
$$
U_{13}(\mathcal{E}_{2,13} Z^{-n}) 
= \mathcal{E}_{2,13} \sum_{m=-6n}^{n-\lceil\frac{6n}{13}\rceil} c_m Y^{-m}.
$$
\end{lemma}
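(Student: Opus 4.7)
The plan is to adapt the proof of Lemma \lem{5.2} (and hence of Lemma \lem{13.2}) to negative exponents of $Z$ by working in a space of weakly holomorphic modular forms on $\Gamma_0(13)$. The starting point is the factorization
$$
\mathcal{E}_{2,13}(z) Z^{-n} = \mathcal{E}_{2,13}(z) \left(\frac{\eta(z)}{\eta(13z)^{13}}\right)^n \left(\frac{\eta(13z)^{13}}{\eta(169z)}\right)^n,
$$
in which the second factor equals $g(13z)^n$ for the eta-quotient $g(w) = \eta(w)^{13}/\eta(13w)$. The elementary identity $U_{13}(f(z) h(13z)) = h(z) \, U_{13}(f(z))$ then yields
$$
U_{13}\bigl(\mathcal{E}_{2,13}(z) Z^{-n}\bigr) = \left(\frac{\eta(z)^{13}}{\eta(13z)}\right)^n U_{13}(F_n), \qquad F_n(z) := \mathcal{E}_{2,13}(z) \left(\frac{\eta(z)}{\eta(13z)^{13}}\right)^n.
$$

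Next I would identify the space containing $F_n$. Using Ligozat's formula for the orders of eta-quotients at cusps, together with the fact that $\mathcal{E}_{2,13}$ is holomorphic and non-vanishing at both cusps of $\Gamma_0(13)$, one verifies that $F_n$ is a weakly holomorphic modular form on $\Gamma_0(13)$ (with a character when $n$ is odd) of weight $2-6n$, with a pole of order exactly $7n$ at $\infty$ and holomorphic of order $0$ at the cusp $0$. As in the proof of Lemma \lem{5.2}, a basis for this space may be exhibited by multiplying through by $\Delta(z)^{7n} = \eta(z)^{24 \cdot 7n}$ to land in a positive-weight holomorphic space on $\Gamma_0(13)$, whose dimension and eta-quotient basis are available via the formulas of \cite{Co-Oe} and \cite[Theorem 3.8]{Ki-book}. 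The operator $U_{13}$ preserves the ambient weakly holomorphic space and reduces the pole order at $\infty$ from $7n$ to at most $\lfloor 7n/13 \rfloor$.

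Multiplying the resulting basis by $(\eta(z)^{13}/\eta(13z))^n$ restores weight $2$ and, after re-expressing each eta-quotient as $\mathcal{E}_{2,13}(z) Y^{-m}$, produces the claimed Laurent expansion in $Y$. The range of $m$ is dictated by the two pole bounds: the upper bound $m \le n - \lceil 6n/13 \rceil = \lfloor 7n/13 \rfloor$ follows from the $q$-expansion bound on $U_{13}(F_n)$ at $\infty$, while the lower bound $m \ge -6n$ follows from the holomorphy of $U_{13}(F_n)$ at cusp $0$ after accounting for the $7n$-fold zero of $(\eta(z)^{13}/\eta(13z))^n$ at that cusp (by Ligozat's formula). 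Integrality of the $c_m$ then follows from integrality of the $q$-expansion of $\mathcal{E}_{2,13} Z^{-n}$ together with the triangular structure of the conversion to the $Y^{-m}$ basis.

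The main obstacle is controlling the order of $U_{13}(F_n)$ at cusp $0$. The bound at $\infty$ is immediate from the $q$-expansion definition of $U_{13}$, but the cusp-$0$ bound requires either an Atkin-Lehner computation relating $U_{13}$ and the Fricke involution $W_{13}$ on $\Gamma_0(13)$, or a direct analysis of the fiber of cusp $0$ under the covering $X_0(169) \to X_0(13)$ (where cusp $0$ of $\Gamma_0(169)$ is the unique preimage, with ramification index $13$). Either route is routine once the formalism is in place.
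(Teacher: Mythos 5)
There is a genuine gap, and it is located exactly where you flag "the main obstacle": the behavior of $U_{13}(F_n)$ at the cusp $0$. Your factorization puts the pole of $F_n$ at $\infty$ (order $7n$) and makes $F_n$ holomorphic at $0$, but the operator $U_{13}$ does not preserve this configuration: writing $U_{13}(F_n)$ near the cusp $0$, the terms $F_n\left(\frac{z+j}{13}\right)$ with $j\not\equiv 0\pmod{13}$ approach rational points with denominator $13$, which are $\Gamma_0(13)$-equivalent to $\infty$, so the order-$7n$ pole of $F_n$ at $\infty$ is transported to a pole of $U_{13}(F_n)$ at the cusp $0$. Your assertion that $U_{13}(F_n)$ is holomorphic at $0$ is therefore false, and the deduction built on it cannot stand; indeed, if $U_{13}(F_n)$ were holomorphic at $0$, then multiplying by the prefactor $(\eta(z)^{13}/\eta(13z))^n$, which has a zero of order $7n$ at $0$, would force every surviving term to satisfy $m\ge 7n$ (since $\mathcal{E}_{2,13}Y^{-m}$ has order $m$ at the cusp $0$), not $m\ge -6n$ --- and this contradicts the explicit formulas in Corollary \cor{13.3}, where terms with $m=0$ occur while the upper bound is only $\lfloor 7n/13\rfloor$. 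The upper-bound half of your argument (order at $\infty$) is fine; it is the lower bound $m\ge -6n$, which is the substantive content of the lemma, that is unproved. To repair your route you would need a quantitative bound of the form $\mathrm{ord}_0(U_{13}f)\ge\min\bigl(c_1\,\mathrm{ord}_0(f),\,c_2\,\mathrm{ord}_\infty(f)\bigr)$ for weakly holomorphic $f$ on $\Gamma_0(13)$, with the correct width normalizations, and then check that it yields $-6n$; this is not "routine bookkeeping" but the actual crux.

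The paper sidesteps all of this by choosing the other natural factorization: it writes $\mathcal{E}_{2,13}Z^{-n}=\mathcal{E}_{2,13}(z)\bigl(\eta(z)\eta^{11}(13z)\bigr)^n\cdot\bigl(\eta^{11}(13z)\eta(169z)\bigr)^{-n}$, so that the function inside $U_{13}$ is the \emph{entire} form $\mathcal{E}_{2,13}(z)\bigl(\eta(z)\eta^{11}(13z)\bigr)^n\in M_{2+6n}\bigl(\Gamma_0(13),\leg{\cdot}{13}^n\bigr)$ (orders $6n$ at $\infty$ and $n$ at $0$, both nonnegative). Since $U_{13}$ preserves this space of entire forms, no cusp-$0$ analysis is needed: the space has the eta-quotient basis $\mathcal{E}_{2,13}\,\eta(z)^{12n-b}\eta(13z)^{b}$ with $-n\le b\le 13n$ and $b\equiv n\pmod 2$, and after dividing by $\bigl(\eta^{11}(z)\eta(13z)\bigr)^n$ each basis element becomes $\mathcal{E}_{2,13}Y^{-m}$ with $m=(n-b)/2\in[-6n,n]$; the bound $\mathrm{ord}_\infty(U_{13}h)\ge\lceil\mathrm{ord}_\infty(h)/13\rceil=\lceil 6n/13\rceil$ then sharpens the upper end to $m\le n-\lceil 6n/13\rceil$. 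I recommend you switch to this factorization; your overall scaffolding (pulling a function of $13z$ through $U_{13}$, identifying the ambient space, reading off the range of $m$ from a basis ordered by cusp orders) is the right one, but it only closes when the function inside $U_{13}$ is holomorphic at both cusps.
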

\begin{proof}
The proof is analogous to Lemma \lem{13.2}. The main difference is that we write
$$
U_{13}(\mathcal{E}_{2,13} Z^{-n}) 
= U_{13}\left(\mathcal{E}_{2,{13}}(z) 
\left(\eta(z) \eta^{11}({13}z)\right)^n \right)
\left(\eta^{11}(z) \eta({13}z)\right)^{-n},
$$
and use the fact that
$\mathcal{E}_{2,{13}}(z)
\left(\eta(z) \eta^{11}({13}z)\right)^n\in M_{2+6n}(\Gamma_{0}(13),\leg{\cdot}{13}^n)$.
\end{proof}

\begin{corollary}
\mylabel{cor:13.3}
\begin{align*}
U_{13}(\mathcal{E}_{2,13} ) & = \mathcal{E}_{2,13}
\\
U_{13}(\mathcal{E}_{2,13} Z^{-1}) & = -13 \, \mathcal{E}_{2,13}
\\
U_{13}(\mathcal{E}_{2,13} Z^{-2}) & = 13 \, \mathcal{E}_{2,13}
\\
U_{13}(\mathcal{E}_{2,13} Z^{-3}) & = -13^2 \, \mathcal{E}_{2,13}
\\
U_{13}(\mathcal{E}_{2,13} Z^{-4}) & = 13^2 \, \mathcal{E}_{2,13}
\\
U_{13}(\mathcal{E}_{2,13} Z^{-5}) & = 13 \, \mathcal{E}_{2,13}(
4 \,Y^{-2} + 6 \cdot 13\,Y^{-1} + 13^{2})\\
U_{13}(\mathcal{E}_{2,13} Z^{-6}) & = 13^3 \, \mathcal{E}_{2,13}
\\
U_{13}(\mathcal{E}_{2,13} Z^{-7}) & = 13 \, \mathcal{E}_{2,13}(
-14 \,Y^{-3} - 12 \cdot 13\,Y^{-2} + 13^{3})\\
U_{13}(\mathcal{E}_{2,13} Z^{-8}) & = 13^4 \, \mathcal{E}_{2,13}
\\
U_{13}(\mathcal{E}_{2,13} Z^{-9}) & = 13 \, \mathcal{E}_{2,13}(
18 \,Y^{-4} - 36 \cdot 13\,Y^{-3} - 40 \cdot 13^{2}\,Y^{-2} - 14 \cdot 13^{3}\,Y^{-1} - 13^4)\\
U_{13}(\mathcal{E}_{2,13} Z^{-10}) & = 13^5 \, \mathcal{E}_{2,13}
\\
U_{13}(\mathcal{E}_{2,13} Z^{-11}) & = 13 \, \mathcal{E}_{2,13}(
82 \,Y^{-5} + 456 \cdot 13\,Y^{-4} + 360 \cdot 13^{2}\,Y^{-3} + 126 \cdot 13^{3}\,Y^{-2} + 18 \cdot 13^{4}\,Y^{-1} + 13^{5})\\
U_{13}(\mathcal{E}_{2,13} Z^{-12}) & = 13^6 \, \mathcal{E}_{2,13}
\\
\end{align*}
\end{corollary}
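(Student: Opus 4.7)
The plan is to mirror Corollaries \cor{5.3} and \cor{7.3}. The identity $U_{13}(\mathcal{E}_{2,13}) = \mathcal{E}_{2,13}$ is immediate from $\dim M_2(\Gamma_0(13)) = 1$: the operator $U_{13}$ preserves $M_2(\Gamma_0(13))$ (because $13\mid 13$), so $U_{13}(\mathcal{E}_{2,13}) = c\,\mathcal{E}_{2,13}$ for some scalar $c$, and comparing constant terms (using $\mathcal{E}_{2,13} = 1+O(q)$) forces $c=1$.

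For each $k$ with $1\le k\le 12$, Lemma \lem{13.2a} provides the structural formula
\[
U_{13}(\mathcal{E}_{2,13}\,Z^{-k}) = \mathcal{E}_{2,13}\sum_{m=-6k}^{\,k-\lceil 6k/13\rceil} c_m^{(k)}\,Y^{-m}
\]
for uniquely determined integers $c_m^{(k)}$. Since $Y$ has leading $q$-exponent $1$, the monomials $Y^{-m}$ have pairwise distinct leading $q$-exponents, and the coefficients $c_m^{(k)}$ are pinned down one at a time, starting from the largest admissible $m$ and proceeding downward, by matching Fourier coefficients on the two sides. Concretely, I would expand $\mathcal{E}_{2,13}(z)\,Z(z)^{-k}$ as a $q$-series starting at $q^{-7k}$, apply $U_{13}$ by retaining only the terms whose exponent is divisible by $13$ (yielding a series starting at $q^{\lceil-7k/13\rceil}$), and then read off the $c_m^{(k)}$ in succession from the triangular system.

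The main obstacle is organizational rather than conceptual. For each $k$ up to $12$ one must carry enough terms in the $q$-expansion to determine every coefficient in the range supplied by Lemma \lem{13.2a}, and in particular to verify that the many coefficients of positive powers of $Y$ (admitted as possible by the lemma but not appearing in the stated formulas) do genuinely vanish; for $k\in\{5,7,9,11\}$ only a short initial segment of the range carries nonzero mass, and for the remaining cases the Laurent polynomial collapses to a single power of $13$ times $\mathcal{E}_{2,13}$. This amounts to a routine but lengthy computation, most conveniently carried out in a computer algebra system. Once these finite checks are completed, each of the twelve remaining identities follows directly, and the characteristic divisibility of every coefficient by an explicit power of $13$ emerges naturally from the calculation.
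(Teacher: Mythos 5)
Your proposal is correct and follows the paper's own route: just as Corollary \cor{5.3} is deduced from Lemma \lem{5.2} ``and straightforward calculation,'' the thirteen identities here are obtained from the structural form supplied by Lemma \lem{13.2a} (and the one\--dimensionality of $M_2(\Gamma_0(13))$ for the first identity), with the integer coefficients then pinned down by the triangular comparison of $q$\--expansions that you describe. The only content beyond the lemma is the finite machine computation, exactly as in the paper.
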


We need the 13th order modular equation that was used by Atkin and
O'Brien \cite{At-OB} to study properties of $p(n)$ modulo powers of $13$.
Lehner \cite{Le49} derived this equation earlier.

\beq
Z^{13}(z) = 
\sum_{r=1}^{13} \sum_{s=\lfloor\frac{1}{2}(r+2)\rfloor}^7
\psi_{r,s} Y^{s}(13z) \, Z^{13-r}(z),
\mylabel{eq:ME13}
\eeq
where the matrix $\Psi=(\psi_{i,j})$ is given in \eqn{Psidef},
and $Y(z)$, $Z(z)$ are given in \eqn{ZY13defs}.
The modular equation and the matrix $\Psi$  are given explicitly in
Appendix C in \cite{At-OB}

\begin{lemma}
\mylabel{lem:13.4}
For $i\ge0$
$$
U_{13}( \mathcal{E}_{2,13} Z^i) =  
\mathcal{E}_{2,13}(z) \,\sum_{j=\lceil\frac{7i}{13}\rceil}^{7i}
 m_{i,j} Y^j,
$$
where $Z=Z(z)$, $Y=Y(z)$ are defined in \eqn{ZY13defs}, and
the $m_{i,j}$ are defined in Theorem \thm{mainthm13}.
\end{lemma}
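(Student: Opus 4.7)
The plan is to mimic the proofs of Lemmas \lem{5.4} and \lem{7.4}, proceeding by strong induction on $i \ge 0$ using the 13th order modular equation \eqn{ME13} (due to Lehner and Atkin--O'Brien). Lemma \lem{13.2} already guarantees that for $i \ge 1$ one has
$U_{13}(\mathcal{E}_{2,13} Z^i) = \mathcal{E}_{2,13}(z)\sum_{j=\lceil 7i/13\rceil}^{7i} c_{i,j}\, Y^j$
for some integers $c_{i,j}$; the content of the lemma is the identification $c_{i,j} = m_{i,j}$, where the $m_{i,j}$ are the entries produced by the explicit initial rows and the recurrence of Theorem \thm{mainthm13}. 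The base case $i = 0$ is $U_{13}(\mathcal{E}_{2,13}) = \mathcal{E}_{2,13}$ from Corollary \cor{13.3}, which matches $m_{0,0} = 1$ in the displayed initial matrix.

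For the inductive step, fix $i \ge 1$ and multiply the modular equation \eqn{ME13} through by $\mathcal{E}_{2,13}(z)\, Z^{i-13}(z)$ to obtain
\[
\mathcal{E}_{2,13}(z)\, Z^i(z) \;=\; \sum_{r=1}^{13}\sum_{s=\lfloor(r+2)/2\rfloor}^{7} \psi_{r,s}\, Y^s(13z)\, \mathcal{E}_{2,13}(z)\, Z^{i-r}(z).
\]
Applying $U_{13}$ and using the standard pull-through identity $U_{13}(f(13z)\, g(z)) = f(z)\, U_{13}(g(z))$ yields
\[
U_{13}(\mathcal{E}_{2,13}\, Z^i) \;=\; \sum_{r,s} \psi_{r,s}\, Y^s(z)\, U_{13}(\mathcal{E}_{2,13}\, Z^{i-r}).
\]
For indices with $i - r \ge 1$ one substitutes the inductive hypothesis; for $i - r \in \{-12,\ldots,0\}$ (which occurs precisely when $1 \le i \le 12$) one substitutes Corollary \cor{13.3}, whose closed-form expansions are exactly what has been encoded in the explicit first thirteen rows of the matrix $M$ (these negative-exponent evaluations being legitimized by Lemma \lem{13.2a}). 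Extracting the coefficient of $Y^j$ on the right-hand side then reproduces the recurrence \eqn{mrec13} verbatim, and the induction closes.

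The main obstacle is bookkeeping rather than anything substantive. One must verify that the thirteen explicit rows $m_{k,\cdot}$ for $-12 \le k \le 0$ in Theorem \thm{mainthm13} really do encode Corollary \cor{13.3} under the displayed column convention (the column containing the lone entry $\pm 13^{\lceil |k|/2\rceil}$ in each even-indexed row corresponds to $j = 0$), and that the zero-padding $m_{k,\ell}=0$ for $k \ge 1$, $-6 \le \ell \le 0$, together with the lower bound $s \ge \lfloor(r+2)/2\rfloor$ inherited from the modular equation, correctly prevents any spurious terms from appearing outside the support $\lceil 7i/13\rceil \le j \le 7i$ predicted by Lemma \lem{13.2}. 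Both checks are mechanical once the indexing conventions are unwound, after which the argument runs in exact parallel with the $\ell=5$ and $\ell=7$ cases treated earlier in the section.
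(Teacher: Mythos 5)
Your proposal is correct and is essentially the argument the paper intends (the paper omits the proof of Lemma \lem{13.4} as being analogous to that of Lemma \lem{5.4}): base cases from Corollary \cor{13.3} encoded in the rows $m_{k,\cdot}$, $-12\le k\le 0$, then induction on $i$ via the modular equation \eqn{ME13} and the pull-through identity $U_{13}(f(13z)g(z))=f(z)U_{13}(g(z))$, which reproduces the recurrence \eqn{mrec13}. The only nit is that the case $i-r=0$ also arises when $i=13$ (take $r=13$), not only for $1\le i\le 12$, but this does not affect the argument since the row $m_{0,\cdot}$ is among the tabulated base rows.
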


\begin{lemma}
\mylabel{lem:13.5} For $i\ge0$,
\begin{align}
U_{13}( \mathcal{E}_{2,{13}} Y^i) 
&=\mathcal{E}_{2,{13}}(z)\,\sum_{j=\lceil\frac{i}{13}\rceil}^{13i} a_{i,j} Y^j,
\mylabel{eq:U{13}E2Yi}\\
U_{13}( \mathcal{E}_{2,{13}} Z Y^i) 
&=\mathcal{E}_{2,{13}}(z)\,\sum_{j=\lceil\frac{i+7}{13}\rceil}^{13i+7} b_{i,j} Y^j    
\mylabel{eq:U13E2ZYi}
\end{align}
where the $a_{i,j}$, $b_{i,j}$ are defined in \eqn{ab13defs}.
\end{lemma}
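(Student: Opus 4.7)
The plan is to mirror the proof of Lemma \lem{5.5} essentially verbatim, reducing both identities to Lemma \lem{13.4} by means of the elementary eta-quotient identity
$$
Z(z)^{2} \;=\; Y(z)\, Y(13z),
$$
which is immediate from the definitions \eqn{ZY13defs}, since both sides equal $\eta(169z)^{2}/\eta(z)^{2}$. Equivalently,
$$
Y^{i} \;=\; Z^{2i}\, Y(13z)^{-i},
\qquad
Z\, Y^{i} \;=\; Z^{2i+1}\, Y(13z)^{-i}.
$$

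For the first identity I would combine this with the standard fact that $U_{13}\bigl(f(z)\, g(13z)\bigr) = g(z)\, U_{13}\bigl(f(z)\bigr)$ whenever $g$ is $1$-periodic, applied with $g = Y^{-i}$:
$$
U_{13}\bigl(\mathcal{E}_{2,13}\, Y^{i}\bigr)
\;=\; U_{13}\bigl(\mathcal{E}_{2,13}(z)\, Z(z)^{2i}\, Y(13z)^{-i}\bigr)
\;=\; Y(z)^{-i}\, U_{13}\bigl(\mathcal{E}_{2,13}(z)\, Z(z)^{2i}\bigr).
$$
Invoking Lemma \lem{13.4} with exponent $2i$ and then shifting the summation index by $-i$ gives
$$
U_{13}\bigl(\mathcal{E}_{2,13}\, Y^{i}\bigr)
\;=\; \mathcal{E}_{2,13}(z) \sum_{j=\lceil i/13\rceil}^{13i} m_{2i,\,i+j}\, Y^{j},
$$
which matches the first asserted identity once the definition $a_{i,j} = m_{2i,\,i+j}$ from \eqn{ab13defs} is applied. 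The ranges transform via $\lceil 14i/13\rceil - i = \lceil i/13\rceil$ and $14i - i = 13i$, as required.

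The second identity follows in exactly the same way using $Z\, Y^{i} = Z^{2i+1}\, Y(13z)^{-i}$, invoking Lemma \lem{13.4} with exponent $2i+1$, and using $b_{i,j} = m_{2i+1,\,i+j}$; the new range is $\lceil (i+7)/13\rceil$ to $13i+7$, obtained from $\lceil 7(2i+1)/13\rceil - i = \lceil (i+7)/13\rceil$ and $7(2i+1) - i = 13i+7$. There is no real obstacle here: the lemma is an index-shifting bookkeeping device designed so that $U_{13}(\mathcal{E}_{2,13}\,Y^{i})$ and $U_{13}(\mathcal{E}_{2,13}\,Z\,Y^{i})$ are again polynomials in $Y$ with $\mathcal{E}_{2,13}$ pulled out, which is precisely what is needed to run the inductive step in the analogue of Theorem \thm{mainthm5} at $\ell = 13$. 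The only nontrivial input beyond Lemma \lem{13.4} is the one-line identity $Z^{2} = Y \cdot Y(13z)$.
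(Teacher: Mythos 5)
Your proposal is correct and is precisely the argument the paper intends: Lemma \lem{13.5} is stated without proof because it is the exact analogue of Lemma \lem{5.5}, whose proof uses the same reduction $Y^{i}=Z^{6i}Y(5z)^{-i}$ (there $Z^{6}=Y\cdot Y(5z)$), the commutation of $U_{\ell}$ with multiplication by a function of $\ell z$, and the reindexing $j\mapsto i+j$. Your identity $Z^{2}=Y\cdot Y(13z)$ and the range computations $\lceil 14i/13\rceil-i=\lceil i/13\rceil$, $\lceil(14i+7)/13\rceil-i=\lceil(i+7)/13\rceil$ all check out.
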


Let $\pi_{13}(n)$ denote the exact power of $13$ dividing $n$.
Then
\begin{lemma}
\mylabel{lem:13.6}
For $i$, $j\ge0$,
\beq
\pi_{13}(m_{i,j}) \ge \lfloor \tfrac{1}{14}(13j - 7i +13)\rfloor,
\mylabel{eq:nu13m}
\eeq
where the matrix $M=(m_{i,j})$ is defined in
Theorem \thm{mainthm13}.
\end{lemma}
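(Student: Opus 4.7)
The plan is to follow the same pattern used for the mod $5$ and mod $7$ analogues in Lemmas \lem{5.6} and \lem{7.6}: establish the bound directly on the seed data and then propagate it via the defining recurrence. For the base cases, one inspects the thirteen initial rows of $M$ displayed in the statement of Theorem \thm{mainthm13}. Each nonzero entry in those rows is listed explicitly, and in every case a routine check confirms the asserted bound $\pi_{13}(m_{i,j}) \ge \lfloor (13j - 7i + 13)/14 \rfloor$---for instance, at $(i,j) = (-11,-5)$ the entry is $82\cdot 13$ with 13-adic valuation $1$, while the bound evaluates to $\lfloor 25/14 \rfloor = 1$. Together with the vanishing convention $m_{k,\ell} = 0$ for $k \ge 1$ and $-6 \le \ell \le 0$, this covers every value that can appear as input to the recurrence.

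For the induction step, fix $i \ge 1$ and $j \ge 1$ and apply the recurrence \eqn{mrec13}. The key technical input is the per-entry estimate
\[
\pi_{13}(\psi_{r,s}) \ge \left\lceil \frac{13s - 7r}{14} \right\rceil
\]
for each nonzero entry of $\Psi$, a finite verification against the explicit matrix \eqn{Psidef}. Granting this and using the inductive hypothesis (or the base-case check when $i - r < 0$) to bound $\pi_{13}(m_{i-r,j-s})$, each summand has 13-adic valuation at least
\[
\left\lceil \frac{13s - 7r}{14} \right\rceil + \left\lfloor \frac{13j - 7i + 13 - (13s - 7r)}{14} \right\rfloor \ge \left\lfloor \frac{13j - 7i + 13}{14} \right\rfloor,
\]
where the last inequality is Lemma \lem{floorbnd} applied with $n = 14$, after rewriting the ceiling as $\lceil A/14 \rceil = \lfloor (A+13)/14 \rfloor$ (valid since $A = 13s - 7r$ is an integer). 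Taking the minimum over $(r,s)$ gives the desired bound on $\pi_{13}(m_{i,j})$.

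The argument is entirely parallel to Lemmas \lem{5.6} and \lem{7.6}; the main obstacle is simply the sheer volume of the bookkeeping, since one must verify the valuation inequality on each of the roughly forty nonzero entries of $\Psi$ and on every nonzero entry of the thirteen initial rows of $M$. These are tedious but mechanical checks, and once they are in hand the induction itself is immediate.
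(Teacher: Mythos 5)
Your proof is correct and follows essentially the same route as the paper: the per-entry valuation bound on $\Psi$ that you state is identical to the paper's \eqn{nu13psi} (since $\lceil A/14\rceil=\lfloor (A+13)/14\rfloor$ for integral $A$), and the induction step via the recurrence \eqn{mrec13} and Lemma \lem{floorbnd} is the same. The only cosmetic difference is that you verify the extended bound on the negative-index seed rows $-12\le i\le 0$ and start the induction at $i=1$, whereas the paper checks rows $0\le i\le 12$ directly and inducts from $i\ge 13$.
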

\begin{proof}
As noted in \cite{At-OB} we observe that
\beq
\pi_{13}(\psi_{r,s}) \ge  \lfloor \tfrac{1}{14}(13s - 7r +13)\rfloor,
\mylabel{eq:nu13psi}
\eeq
for all $1 \le t \le 13$ and $1 \le s \le 13$.
We verify the result for $0 \le i \le 12$ by direct computation
using the recurrence \eqn{mrec13}. We use \eqn{nu13psi}, the recurrence
\eqn{mrec13} and Lemma \lem{floorbnd}
%% the fact that
%% $$
%% \lfloor \tfrac{1}{14}(x)\rfloor
%% +\lfloor \tfrac{1}{14}(y)\rfloor
%% \ge 
%% +\lfloor \tfrac{1}{14}(x+y-13)\rfloor \qquad\qquad\mbox{(see p.447 AT-OB)},
%% $$
to prove the general result by induction.
\end{proof}

\begin{corollary}
\mylabel{cor:13.7}
$$
\pi_{13}(a_{i,j}) \ge  \lfloor \tfrac{1}{14}({13}j - i +13)\rfloor, \qquad
\pi_{13}(b_{i,j}) \ge  \lfloor \tfrac{1}{14}({13}j - i + 6)\rfloor, \qquad
%% a2:=floor(1/4*({13}*j-i+3)): b2:=floor(1/4*({13}*j-i+1)):
$$
where the $a_{i,j}$, $b_{i,j}$ are defined by \eqn{ab13defs}.
\end{corollary}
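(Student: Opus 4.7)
The plan is to derive the corollary as an immediate substitution into the bound of Lemma \lem{13.6}. Recall from \eqn{ab13defs} that $a_{i,j}=m_{2i,i+j}$ and $b_{i,j}=m_{2i+1,i+j}$, so I would just specialise the estimate
$$\pi_{13}(m_{k,\ell}) \ge \left\lfloor \tfrac{1}{14}(13\ell - 7k + 13)\right\rfloor$$
at $(k,\ell)=(2i,i+j)$ and $(k,\ell)=(2i+1,i+j)$ respectively.

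First I would carry out the substitution for $a_{i,j}$: one computes
$$13(i+j) - 7(2i) + 13 = 13j - i + 13,$$
which gives exactly $\pi_{13}(a_{i,j}) \ge \lfloor \tfrac{1}{14}(13j - i +13)\rfloor$. Next I would do the same for $b_{i,j}$: the numerator becomes
$$13(i+j) - 7(2i+1) + 13 = 13j - i + 6,$$
yielding $\pi_{13}(b_{i,j}) \ge \lfloor \tfrac{1}{14}(13j - i + 6)\rfloor$.

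Since Lemma \lem{13.6} has already been established (by checking the base cases $0 \le i \le 12$ directly from the explicit matrix in Theorem \thm{mainthm13} and inducting via the recurrence \eqn{mrec13} together with the bound \eqn{nu13psi} on $\Psi$), there is essentially no obstacle here: the corollary is a one-line arithmetic consequence of the lemma and the definitions in \eqn{ab13defs}. The only thing to be slightly careful about is the integer arithmetic inside the floor, but since $7\cdot 2i = 14i$ and $7(2i+1)=14i+7$ both have a clean reduction mod $14$, no rounding subtleties arise and the floor bounds transfer verbatim.
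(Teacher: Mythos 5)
Your proposal is correct and is exactly the intended argument: the paper states Corollary \cor{13.7} as an immediate consequence of Lemma \lem{13.6} via the substitutions $a_{i,j}=m_{2i,i+j}$ and $b_{i,j}=m_{2i+1,i+j}$, and your arithmetic $13(i+j)-7(2i)+13=13j-i+13$ and $13(i+j)-7(2i+1)+13=13j-i+6$ checks out. Nothing further is needed.
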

%% v1(2*i,i+j);
%%                                1      13     13
%%                              - -- i + -- j + --
%%                                14     14     14
%% v1(2*i+1,i+j);
%%                                 1      13     3
%%                               - -- i + -- j + -
%%                                 14     14     7
%% 

We provide more complete details for the proof of the following lemma since
congruences for the spt-function modulo $13$ are stronger than those
for the partition function.
\begin{lemma}
\mylabel{lem:13.8}
\begin{align}
\pi_{13}(x_{2,0}) &=1, 
\mylabel{eq:nu13x20}\\
\pi_{13}(x_{2,j}) &\ge 3  +  \lfloor \tfrac{1}{14}(13j)\rfloor
\qquad\mbox{for $j\ge1$}
\mylabel{eq:nu13x2j}\\
\pi_{13}(x_{2b-1,j}) &\ge 2b-2 +  \lfloor \tfrac{1}{14}(13j - 10)\rfloor
\qquad\mbox{for $b\ge 2$, and $j\ge1$}
\mylabel{eq:nu13x2b1j}\\
\pi_{13}(x_{2b,j}) &\ge 2b-1 +  \lfloor \tfrac{1}{14}(13j)\rfloor
\qquad\mbox{for $b\ge 2$, and $j\ge1$}.      
\mylabel{eq:nu13x2bj}
\end{align}
\end{lemma}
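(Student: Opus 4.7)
The plan is to establish (i)--(iv) by induction on $b$, using the matrix recurrences $\vec{x}_{2b} = \vec{x}_{2b-1} A$ and $\vec{x}_{2b+1} = \vec{x}_{2b} B$, the $13$-adic bounds from Corollary \cor{13.7},
\[
\pi_{13}(a_{i,j}) \ge \left\lfloor \tfrac{13j - i + 13}{14}\right\rfloor, \qquad
\pi_{13}(b_{i,j}) \ge \left\lfloor \tfrac{13j - i + 6}{14}\right\rfloor,
\]
and repeated applications of Lemma \lem{floorbnd}. The overall shape of the argument parallels the proof of Lemma \lem{5.8}, with $\lfloor (5j + c)/2\rfloor$ replaced by $\lfloor (13j + c)/14\rfloor$ throughout.

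For the base case $\vec{x}_2 = \vec{x}_1 A$, I exploit that $\vec{x}_1$ has only eight nonzero entries with $\pi_{13}(x_{1,i}) = i + 1$ for $0 \le i \le 7$. The arithmetic identity $(i+1) + \lfloor (13j - i + 13)/14 \rfloor = 1 + \lfloor 13(i+j+1)/14 \rfloor$, combined with a direct comparison to $3 + \lfloor 13j/14 \rfloor$, yields (ii) for $i \ge 2$ automatically; the $i = 0, 1$ contributions are handled using the explicit form of $a_{i,j}$ for small $i$ (from the displayed rows of $M$ together with a few applications of the recurrence \eqn{mrec13}) to extract the extra $13$-divisibility needed. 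For (i), the same analysis gives $\pi_{13}(x_{2,0}) \ge 1$; equality is confirmed by computing $x_{2,0} = \sum_{i=0}^{7} x_{1,i} m_{2i,i}$ modulo $13^2$.

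For the inductive step, I treat the two alternating recurrences in parallel and propagate the auxiliary bound $\pi_{13}(x_{2b,0}) \ge 2b - 1$ alongside the main estimates. For the step $\vec{x}_{2b} \mapsto \vec{x}_{2b+1}$ giving (iii), I split $x_{2b+1,j} = \sum_i x_{2b,i} b_{i,j}$ into the $i = 0$ contribution and the $i \ge 1$ contributions. For $i \ge 1$, Lemma \lem{floorbnd} applied with $x = 13i$, $y = 13j - i + 6$, $n = 14$ gives
\[
\lfloor 13i/14 \rfloor + \lfloor (13j - i + 6)/14 \rfloor \ge \lfloor (12i + 13j - 7)/14 \rfloor \ge 1 + \lfloor (13j - 10)/14 \rfloor,
\]
since $12i + 3 \ge 15$ for $i \ge 1$, producing the desired bound $2b + \lfloor (13j - 10)/14\rfloor$. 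For $i = 0$, the auxiliary bound on $x_{2b,0}$ combined with $\pi_{13}(b_{0,j}) \ge \lfloor (13j + 6)/14\rfloor$ gives the same bound, since $(13j + 6) - (13j - 10) = 16 > 14$. The step $\vec{x}_{2b-1} \mapsto \vec{x}_{2b}$ producing (iv) is entirely analogous, using (iii) and the bound on $a_{i,j}$, and the auxiliary bound on $x_{2b+2,0}$ is propagated by the same sort of calculation.

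The main obstacle will be the base case: the small-$i$ contributions to (ii) are not covered by the general floor-function argument and require extracting additional $13$-divisibility from the explicit entries of $M$ in Theorem \thm{mainthm13}, while the exact assertion $\pi_{13}(x_{2,0}) = 1$ has to be checked numerically. Once the base case is settled and the auxiliary bound on $x_{2b,0}$ is in hand, the remainder of the proof is a mechanical repetition of Lemma \lem{floorbnd} exactly as in the proof of Lemma \lem{5.8}.
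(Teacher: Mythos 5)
Your overall strategy is the same as the paper's: verify the $b=2$ data by direct computation, then induct on $b$ through the alternating recurrences $\vec{x}_{2b}=\vec{x}_{2b-1}A$, $\vec{x}_{2b+1}=\vec{x}_{2b}B$ using Corollary \cor{13.7} and Lemma \lem{floorbnd}. Your auxiliary bound on $x_{2b,0}$ is harmless but not really needed: $a_{i,0}=0$ for $i\ge1$ and $b_{i,0}=0$ for all $i$, so $x_{2b+1,0}=0$ and $x_{2b,0}=0$ for $b\ge2$; the constant term matters only in the single step $\vec{x}_2\mapsto\vec{x}_3$, where $\pi_{13}(x_{2,0})=1$ is what forces the weaker exponent $2b-2$ in \eqn{nu13x2b1j}. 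Your treatment of the odd step $\vec{x}_{2b}\mapsto\vec{x}_{2b+1}$ is correct and matches the paper.

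There is, however, a genuine gap in the step $\vec{x}_{2b-1}\mapsto\vec{x}_{2b}$, which you declare ``entirely analogous'' and ``mechanical.'' For the $i=1$ term of $x_{2b,j}=\sum_i x_{2b-1,i}a_{i,j}$ the inductive hypothesis gives only $\pi_{13}(x_{2b-1,1})\ge 2b-2+\lfloor \tfrac{3}{14}\rfloor=2b-2$, and Corollary \cor{13.7} gives $\pi_{13}(a_{1,j})\ge\lfloor\tfrac{1}{14}(13j+12)\rfloor$; your general floor-lemma computation requires $12i-10\ge 14$, which fails precisely at $i=1$. Concretely, the combined generic bound is $2b-2+\lfloor\tfrac{1}{14}(13j+12)\rfloor$, which coincides with the target $2b-1+\lfloor\tfrac{1}{14}(13j)\rfloor$ for $1\le j\le 12$ but falls one short at $j=13$ (it gives $2b+10$ instead of $2b+11$). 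The paper closes this by a \emph{direct calculation} of $\pi_{13}(a_{1,j})=\pi_{13}(m_{2,1+j})$, using that $a_{1,j}=0$ for $j>13$; note that even Lemma \lem{13.6} is one short at $m_{2,14}$ (it gives $12$, and one needs $13$), so this really does require computing row $2$ of $M$ from the recurrence \eqn{mrec13} rather than quoting any of the stated general bounds. Once this extra input is supplied, the rest of your induction goes through as described.
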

\begin{proof}
We have calculated $\vec{x}_2$ and verified \eqn{nu13x20}--\eqn{nu13x2j}.
We note that $x_{2,j}=0$ for $j > 91$. Now,
$$
x_{3,j} = \sum_{i\ge0} x_{2,i} b_{i,j},
$$
and we note that $x_{3,0}=0$.
We have
$$
\pi_{13}( x_{2,0} b_{0,j} ) = 1 + \pi_{13}(b_{0,j})
\ge 2 +  \lfloor \tfrac{1}{14}(13j - 8)\rfloor
$$
by Corollary \cor{13.7}. 
For $i\ge1$ 
\begin{align*}
\pi_{13}( x_{2,i} b_{i,j} ) &= \pi_{13}( x_{2,i}) 
+ \pi_{13}( b_{i,j}) \ge
3 +  \lfloor \tfrac{1}{14}(13i)\rfloor 
+  \lfloor \tfrac{1}{14}(13j-i+6)\rfloor\\
&\ge
3 +  \lfloor \tfrac{1}{14}(13j+12i-7)\rfloor
\ge
2 +  \lfloor \tfrac{1}{14}(13j-9)\rfloor,
\end{align*}
again by Corollary \cor{13.7}. It follows that
$$
\pi_{13}(x_{3,j}) \ge 2 +  \lfloor \tfrac{1}{14}(13j-9)\rfloor,
$$
and \eqn{nu13x2b1j} holds for $b=2$.
Now supposed $b\ge2$ is fixed and that \eqn{nu13x2b1j} holds.
We have
$$
x_{2b,j} = \sum_{i\ge1} x_{2b-1,i} a_{i,j}.
$$
Now
$$
\pi_{13}(x_{2b-1,1} a_{1,j}) 
= \pi_{13}(x_{2b-1,1}) + \pi_{13}(a_{1,j})
\ge 2b-2  + \pi_{13}(a_{1,j}) \ge 2b-1 + 
 \lfloor \tfrac{1}{14}(13j)\rfloor,
$$
by a direct calculation noting that $a_{1,j}=0$ for $j > 13$.
For $i\ge2$ 
\begin{align*}
\pi_{13}( x_{2b-1,i} a_{i,j} ) &= \pi_{13}( x_{2b-1,i}) 
+ \pi_{13}( a_{i,j}) \ge
2b-2 +  \lfloor \tfrac{1}{14}(13i-10)\rfloor 
+  \lfloor \tfrac{1}{14}(13j-i+13)\rfloor \\
&\ge
2b-2 +  \lfloor \tfrac{1}{14}(13j+12i-10)\rfloor
\ge
2b-1 +  \lfloor \tfrac{1}{14}(13j)\rfloor,
\end{align*}
again by Corollary \cor{13.7}. It follows that
$$
\pi_{13}(x_{2b,j}) \ge 2b-1 +  \lfloor \tfrac{1}{14}(13j)\rfloor,
$$
and \eqn{nu13x2bj} holds. For $i \ge 1$

Again suppose $b\ge2$ is fixed, and that \eqn{nu13x2bj} holds.
We have
$$
x_{2b+1,j} = \sum_{i\ge1} x_{2b,i} b_{i,j}.
$$
For $i \ge 1$
\begin{align*}
\pi_{13}( x_{2b,i} b_{i,j} ) &= \pi_{13}( x_{2b,i}) 
+ \pi_{13}( b_{i,j}) \ge
2b-1 +  \lfloor \tfrac{1}{14}(13i)\rfloor 
+  \lfloor \tfrac{1}{14}(13j-i+6)\rfloor\\
&\ge
2b-1 +  \lfloor \tfrac{1}{14}(13j+12i-8)\rfloor
\ge
2b +  \lfloor \tfrac{1}{14}(13j-10)\rfloor,
\end{align*}
again by Corollary \cor{13.7}. It follows that
$$
\pi_{13}(x_{2b+1,j}) \ge 2b +  \lfloor \tfrac{1}{14}(13j-10)\rfloor,
$$
and \eqn{nu13x2b1j} holds with $b$ replaced by $b+1$. 
Lemma \lem{13.8} follows by induction.
\end{proof}

\begin{corollary}
\label{cor:13.9}
For $c\ge 2$,
\beq                
 \mathbf{a}(13^{c}n + \gamma_{c}) - 13 \cdot  \mathbf{a}(13^{c-2}n + \gamma_{c-2}) \equiv 0
\pmod{13^{c-1}}.
\mylabel{eq:a13ccong}
\eeq           
For $a\ge1$,
\begin{align}
\spt(13^{a+2}n + \gamma_{a+2}) - 13 \cdot \spt(13^{a}n + \gamma_{a}) &\equiv 0
\pmod{13^{a+1}},                                   
\mylabel{eq:spt13abigcong}\\
\spt(13^{a}n + \gamma_{a}) &\equiv 0
\pmod{13^{\lfloor \frac{a+1}{2}\rfloor}}.
\mylabel{eq:spt13alittlecong}
\end{align}
\end{corollary}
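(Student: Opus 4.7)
The plan is to establish the three assertions in sequence: first the $\mathbf{a}$-congruence \eqn{a13ccong}, then the large $\spt$-congruence \eqn{spt13abigcong}, and finally the smaller $\spt$-congruence \eqn{spt13alittlecong}. The strategy parallels the proofs of Corollaries \cor{5.9} and \cor{7.9}.

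For \eqn{a13ccong} I would read the generating-function identities \eqn{gen13odd} and \eqn{gen13even} of Theorem \thm{mainthm13}: the left-hand side is a $q$-series whose right-hand side factors as $\mathcal{E}_{2,13}(z)/\eta(\star z)$ times $\sum_{i}x_{c,i}Y^{i}$, and each of $\mathcal{E}_{2,13}(z)/\eta(z)$, $\mathcal{E}_{2,13}(z)/\eta(13z)$, and $Y^{i}=\eta(13z)^{2i}/\eta(z)^{2i}$ has an integer $q$-expansion (up to a common fractional power of $q$). Lemma \lem{13.8} supplies $\pi_{13}(x_{c,i}) \ge c-1$ for every $i \ge 1$ and $c \ge 2$; the corner terms $x_{c,0}$ for $c \ge 3$ must be inspected separately, and here I would repeat the kind of direct matrix-entry check used in the proof of Lemma \lem{13.8} to see that $x_{3,0}=0$, propagating it via the recurrences $\vec{x}_{a+1}=\vec{x}_{a}A$ and $\vec{x}_{a+1}=\vec{x}_{a}B$ together with the vanishing of the specific entries $m_{2i,i}$ and $m_{2i+1,i}$ that can be read off the displayed block of $M$. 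Then every coefficient on the left-hand side is an integer combination of the $x_{c,i}$'s and hence divisible by $13^{c-1}$, giving \eqn{a13ccong}.

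For \eqn{spt13abigcong} I would substitute \eqn{adef}, $\mathbf{a}(n)=12\,\spt(n)+(24n-1)p(n)$, into \eqn{a13ccong}. Using $24\gamma_{c}\equiv 1\pmod{13^{c}}$ and $24\gamma_{c-2}\equiv 1\pmod{13^{c-2}}$, the ``partition part''
\[
(24\cdot 13^{c}n+24\gamma_{c}-1)\,p(13^{c}n+\gamma_{c})\;-\;13(24\cdot 13^{c-2}n+24\gamma_{c-2}-1)\,p(13^{c-2}n+\gamma_{c-2})
\]
is a sum of two contributions divisible by $13^{c}$ and $13\cdot 13^{c-2}=13^{c-1}$ respectively, so is divisible by $13^{c-1}$. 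What is left is $12\bigl[\spt(13^{c}n+\gamma_{c})-13\,\spt(13^{c-2}n+\gamma_{c-2})\bigr]\equiv 0\pmod{13^{c-1}}$, and since $\gcd(12,13)=1$ the factor $12$ can be dropped. Setting $c=a+2$ yields \eqn{spt13abigcong}.

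For \eqn{spt13alittlecong}, the base cases $a=1$ and $a=2$ follow from Andrews' congruence \eqn{spt13cong}: the $a=1$ case is immediate, and for $a=2$ one checks $\gamma_{2}=162\equiv 6\pmod{13}$, so $13^{2}n+\gamma_{2}\equiv 6\pmod{13}$ lies in Andrews' progression. Induction on $a$ via \eqn{spt13abigcong} then boosts the modulus from $13^{\lfloor(a+1)/2\rfloor}$ to $13^{\lfloor(a+1)/2\rfloor+1}=13^{\lfloor((a+2)+1)/2\rfloor}$, which is the bound needed at stage $a+2$. The main obstacle is the first step, where one must pin down the $13$-adic valuations of the $\vec{x}_{c}$ uniformly in $i$, including the $i=0$ corner not directly covered by Lemma \lem{13.8}; once that bookkeeping is complete, the remaining steps are straightforward consequences of Ramanujan's identity \eqn{adef} and Andrews' mod-$13$ congruence.
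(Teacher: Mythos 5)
Your proposal is correct and follows essentially the same route the paper takes (it mirrors the proof written out for Corollary \cor{5.9}): the $\mathbf{a}$-congruence from Theorem \thm{mainthm13} and Lemma \lem{13.8}, the $\spt$-congruence by subtracting the $(24n-1)p(n)$ part — where you rightly note that for $13$ the factor $24(13^{c}n+\gamma_c)-1\equiv 0\pmod{13^{c}}$ alone suffices, no partition congruence being available — and the final congruence by induction from Andrews' mod $13$ result. The only cosmetic imprecision is that the vanishing of $m_{2i,i}$ and $m_{2i+1,i}$ for $i\ge 1$ comes from the order bound $m_{i,j}=0$ for $j<\lceil 7i/13\rceil$ in Lemma \lem{13.4} (equivalently the stated vanishing $m_{k,\ell}=0$ for $k\ge1$, $\ell\le0$, propagated by the recurrence) rather than from the displayed block of $M$ itself.
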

We note that \eqn{a13ccong} holds when $c=2$ by taking $\gamma_0=1$.
Also when $a=0$ the congruence \eqn{spt13abigcong} has a stronger form.
The proof of the congruence
\beq
\spt(169n-7) - 13\cdot \spt(n) \equiv 0 \pmod{169}.
\mylabel{eq:spt169cong}
\eeq
is analogous to the proof of \eqn{spt25cong}.

%%U_5(\mathcal{E}_{2,5}) &= \mathcal{E}_{2,5}\mylabel{eq:U5Z0}\\

%section 4%%%%%%%%%%%%%%%%%%%%%%%%%%%%%%%%%%%%%%%%%%%%%%%%%%%%%%%%%%%%%%%%%%%
\section{The spt-function modulo $\ell$} \mylabel{sec:sptell}

In this section we improve on results in \cite{Ga10a} and \cite{Br-Ga-Ma}
for the spt-function and the second moment rank function modulo $\ell$.
We let
$$
J_\ell(z) = \sum_{n=s_\ell}^\infty j_\ell(n) q^n,
$$
where $J_\ell(z)$ is defined in \eqn{Jelldef},
and
define
\beq
K_\ell(z) := G_\ell(z) + (-1)^{\frac{1}{2}(\ell-1)} \, \ell \,
             \sum_{n=\lceil\tfrac{s_\ell}{\ell}\rceil}^\infty 
              j_\ell(\ell n) q^n,
\mylabel{eq:Kelldef}
\eeq
where $G_\ell(z)$ is defined in \eqn{Gelldef}.
Then we have
\begin{theorem}
\mylabel{thm:Kellthm}
If $\ell \ge5$ is prime, then
$K_\ell(z)$ is an entire modular form of weight $(\ell+1)$ on the
full modular group $\SL_2(\Z)$.
\end{theorem}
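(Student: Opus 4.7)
My plan is to identify $K_\ell(z)$ with the trace $\mathrm{Tr}_{\Gamma_0(\ell)}^{\SL_2(\Z)}(G_\ell)$; since the trace of an entire form in $M_{\ell+1}(\Gamma_0(\ell))$ automatically lies in $M_{\ell+1}(\SL_2(\Z))$, this yields the theorem at once.

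First I would rewrite the Fricke transformation of $G_\ell$ from Part~(iv) of the proof of Theorem~\thm{Gell} in a cleaner form. Using $\eta(\ell z)^{2\ell}/\eta(z) = q^{2s_\ell}\,E(q^\ell)^{2\ell}/E(q)$ together with \eqn{betadef}, one recognises the product of the eta-factor and the parenthesised series in \eqn{Gelltrans} as exactly $J_\ell(z)$. Equation \eqn{Gelltrans} then becomes
\[
G_\ell\!\left(\frac{-1}{\ell z}\right) \;=\; -(i\ell z)^{\ell+1}\,J_\ell(z).
\]
Using $i^{\ell+1}=(-1)^{(\ell+1)/2}$ for odd prime $\ell$ and the weight-$(\ell+1)$ slash action of $W_\ell=\begin{pmatrix}0 & -1\\ \ell & 0\end{pmatrix}$, this rearranges to
\[
G_\ell\vert_{\ell+1}W_\ell \;=\; (-1)^{(\ell-1)/2}\,\ell^{(\ell+1)/2}\,J_\ell.
\]

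Next I would invoke the classical trace formula. With $\{I\}\cup\{ST^j:0\le j\le\ell-1\}$ as coset representatives of $\Gamma_0(\ell)\backslash\SL_2(\Z)$, the matrix identity $W_\ell\begin{pmatrix}1&j\\ 0&\ell\end{pmatrix}=\ell\cdot ST^j$ yields
\[
(f\vert_k W_\ell)\vert_k U_\ell \;=\; \ell^{k/2-1}\sum_{j=0}^{\ell-1} f\vert_k ST^j
\]
for any $f\in M_k(\Gamma_0(\ell))$, and hence
\[
\mathrm{Tr}_{\Gamma_0(\ell)}^{\SL_2(\Z)}(f) \;=\; f + \ell^{\,1-k/2}\,(f\vert_k W_\ell)\vert_k U_\ell \;\in\; M_k(\SL_2(\Z)).
\]
Specialising to $f=G_\ell$ and $k=\ell+1$, the powers of $\ell$ combine as $\ell^{1-(\ell+1)/2}\cdot\ell^{(\ell+1)/2}=\ell$, producing
\[
\mathrm{Tr}(G_\ell) \;=\; G_\ell + (-1)^{(\ell-1)/2}\,\ell\cdot U_\ell(J_\ell).
\]
Since $J_\ell(z)=\sum_{n\ge s_\ell}j_\ell(n)q^n$ gives $U_\ell(J_\ell)=\sum_{n\ge\lceil s_\ell/\ell\rceil}j_\ell(\ell n)q^n$, comparison with \eqn{Kelldef} shows $K_\ell=\mathrm{Tr}(G_\ell)\in M_{\ell+1}(\SL_2(\Z))$; entirety at the single cusp of $\SL_2(\Z)$ descends from Parts~(iii) and~(iv) of Theorem~\thm{Gell}.

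The main obstacle is the first step: one must carefully match the eta-quotients and track the phase $i^{\ell+1}$ in order to extract the clean Fricke relation for $G_\ell$ from the more complicated form of \eqn{Gelltrans}. Once this relation is in place, everything else is mechanical bookkeeping with the coset representatives of $\Gamma_0(\ell)\backslash\SL_2(\Z)$ and the slash action of $W_\ell$.
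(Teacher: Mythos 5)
Your proposal is correct and follows essentially the same route as the paper: both identify $K_\ell$ with the trace $\Tr(G_\ell)=G_\ell+\ell^{1-\frac{1}{2}(\ell+1)}\,G_\ell\stroke W\stroke U$ from $\Gamma_0(\ell)$ down to $\SL_2(\Z)$ (the paper cites Serre's Lemma 7 for this, where you rederive it from coset representatives), and both extract the Fricke relation $G_\ell\stroke W=(-1)^{\frac{1}{2}(\ell-1)}\ell^{\frac{1}{2}(\ell+1)}J_\ell$ from \eqn{Gelltrans} by matching the eta-quotient and the series against \eqn{betadef} and \eqn{Jelldef}. The only difference is that you spell out the bookkeeping the paper leaves implicit.
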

\begin{proof}
Suppose $\ell\ge5$ is prime.
We utilize Serre's \cite[pp.223--224]{Se} 
results on the trace of a modular form on $\Gamma_0(\ell)$.
By Theorem \thm{Gell} we know that $G_\ell(z)$ is an entire
modular form of weight $(\ell+1)$ on $\Gamma_0(\ell)$.
By \cite[Lemma 7]{Se},
\beq
\Tr(G_\ell) = G_\ell + \ell^{1 - \frac{1}{2}(\ell+1)}\,
G_\ell \stroke W \stroke U
\mylabel{eq:Trace}
\eeq
is an entire modular form of weight $(\ell+1)$ on 
$\SL_2(\Z)$. See \cite[pp.223--224]{Se} for definition of $W$, $U$
and the notation used. From \eqn{Gelltrans} we find that
\beq
G_\ell \stroke W = (-1)^{\frac{1}{2}(\ell-1)} \, \ell^{\frac{1}{2}(\ell+1)}
\, J_\ell.
\mylabel{eq:GellW}
\eeq
From \eqn{Kelldef}, \eqn{Trace} and \eqn{GellW} we see that
$$
K_\ell = \Tr(G_\ell)
$$
is 
an entire modular form of weight $(\ell+1)$ on 
$\SL_2(\Z)$.
\end{proof}

We observed special cases of the following Corollary in 
\cite[Theorem 6.1]{Ga10a}.
\begin{corollary}
\mylabel{cor:sptmodell}
Suppose $\ell\ge5$ is prime. Then
\beq
\sum_{n=\lceil\tfrac{\ell}{24}\rceil}^\infty
\spt(\ell n - s_\ell) q^{n - \frac{\ell}{24}}
\equiv
\eta(z)^{r_\ell} \, L_\ell(z) \pmod{\ell}
\mylabel{eq:sptmodellres}
\eeq
for some integral entire modular form $L_\ell(z)$ on the full modular
group of weight
$\ell+1 - 12\lceil\tfrac{\ell}{24}\rceil$, and where $r_\ell$ and $s_\ell$
are defined in \eqn{rsdefs}.
\end{corollary}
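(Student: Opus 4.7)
The plan is to reduce the defining identity $G_\ell = \alpha_\ell \cdot \eta(z)^{2\ell}/\eta(\ell z)$ modulo $\ell$ and then use Theorem \thm{Kellthm} to transport the congruence to a statement on the full modular group. Three elementary reductions drive the calculation: the Frobenius congruence $(1-q^n)^\ell \equiv 1 - q^{\ell n} \pmod \ell$ yields $\eta(\ell z) \equiv \eta(z)^\ell \pmod \ell$, so $\eta(z)^{2\ell}/\eta(\ell z) \equiv \eta(z)^\ell \pmod \ell$; the identity $24(\ell n - s_\ell) - 1 = \ell(24n - \ell)$ gives $\mathbf{a}(\ell n - s_\ell) \equiv 12\,\spt(\ell n - s_\ell) \pmod \ell$; and the contribution $\chi_{12}(\ell)\,\ell\,\mathbf{a}(n/\ell)$ to $\alpha_\ell$ is manifestly divisible by $\ell$. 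Combining these three pieces gives
$$
G_\ell(z) \;\equiv\; 12\,\eta(z)^\ell \sum_{n\ge\lceil\ell/24\rceil} \spt(\ell n - s_\ell)\,q^{n - \ell/24} \pmod \ell.
$$

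For any prime $\ell \ge 5$ one has $\ell^2 \equiv 1 \pmod{24}$, so the definitions in \eqn{rsdefs} force $r_\ell = 24 - (\ell \bmod 24)$ and hence $\ell + r_\ell = 24\lceil \ell/24\rceil$. Consequently $\eta^{\ell + r_\ell} = \Delta^{\lceil \ell/24 \rceil}$, where $\Delta = \eta^{24}$ is the integral weight-$12$ cusp form on $\SL_2(\Z)$. By Theorem \thm{Kellthm} and \eqn{Kelldef} the difference $K_\ell - G_\ell$ is $\ell$ times an integral $q$-series, so $K_\ell \equiv G_\ell \pmod \ell$, and the displayed congruence tells us that $K_\ell \in M_{\ell+1}(\SL_2(\Z))$ is an integral modular form whose first $\lceil \ell/24 \rceil$ Fourier coefficients are divisible by $\ell$. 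I would then produce an integral $L_\ell \in M_{\ell+1-12\lceil\ell/24\rceil}(\SL_2(\Z))$ satisfying
$$
K_\ell \;\equiv\; \Delta^{\lceil \ell/24 \rceil}\,L_\ell \pmod \ell,
$$
after which \eqn{sptmodellres} falls out by dividing the two displayed congruences by $\eta(z)^\ell$ and absorbing the unit $12^{-1}\bmod\ell$ (available since $\ell \ge 5$) into $L_\ell$.

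The main obstacle is the second display, which asserts that a mod-$\ell$ modular form of level one vanishing to order $\ge N$ at $i\infty$ is divisible by $\overline{\Delta}^N$ within the ambient graded algebra, and that the quotient lifts to an integral form of the expected weight. This step rests on the structure theorem identifying the mod-$\ell$ algebra of level-one modular forms with the polynomial ring $\F_\ell[\overline{E}_4,\overline{E}_6]$, together with the $q$-expansion principle for $\ell \ge 5$ and the fact that $\overline{\Delta}$ is an irreducible element of this polynomial ring whose vanishing cuts out the unique cusp. A useful sanity check comes from the primes $\ell \in \{5,7,13\}$, where the target weight $\ell+1-12\lceil\ell/24\rceil$ is non-positive or zero-dimensional, so that $L_\ell = 0$ and the corollary degenerates precisely to Andrews' original congruences \eqn{spt5cong}, \eqn{spt7cong}, \eqn{spt13cong}.
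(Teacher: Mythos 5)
Your proposal is correct and follows essentially the same route as the paper: reduce $G_\ell$ modulo $\ell$ using $\eta(\ell z)\equiv\eta(z)^\ell$, $\ell\mid 24(\ell n-s_\ell)-1$, and the explicit factor of $\ell$ in the $\mathbf{a}(n/\ell)$ term; invoke Theorem \thm{Kellthm} to land in $M_{\ell+1}(\SL_2(\Z))$; factor out $\Delta^{\lceil\ell/24\rceil}$ from a form vanishing to that order mod $\ell$; and finish with $r_\ell=24\lceil\ell/24\rceil-\ell$. Your version is in fact slightly more careful than the paper's at the last step (the paper's displayed ``$\Delta(z)^{c-\ell}$'' and ``$r_\ell=c-\ell$'' are typos for what you write correctly), and your identification of the $\Delta$-divisibility step as the one nontrivial structural input is exactly right.
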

\begin{proof}
Suppose $\ell\ge5$ is prime. Since
$$
(24n-1)\,p(n) \equiv 0 \pmod{\ell},
$$
for $24n\equiv1\pmod{\ell}$, and using Theorem \thm{Kellthm}
we have
$$
\frac{\eta(z)^{2\ell}}{\eta(\ell z)} 
\sum_{n=0}^\infty \mathbf{a}(\ell n - s_\ell) q^{n - \frac{\ell}{24}}
\equiv P_\ell(z) \pmod{\ell},
$$
for some integral $P_\ell(z)\in M_{\ell+1}(\Gamma(1))$.
We note that
$$
\spt(\ell n - s_\ell) \ne 0
$$
implies that $\ell n - s_\ell\ge 1$ and $n \ge \lceil\tfrac{\ell}{24}\rceil$.
It follows that 
$$
\frac{\eta(z)^{2\ell}}{\eta(\ell z)} 
\sum_{n=\lceil\tfrac{\ell}{24}\rceil}^\infty 
\spt(\ell n - s_\ell) q^{n - \frac{\ell}{24}}
\equiv \Delta(z)^c \, L_\ell(z) \pmod{\ell},
$$
where 
$\Delta(z)$ is Ramanujan's
function
\beq
\Delta(z) := \eta(z)^{24} = q \prod_{n=1}^\infty (1 - q^n)^{24},
\mylabel{eq:Deltadef}
\eeq
$c =  \lceil\tfrac{\ell}{24}\rceil$ and 
$L_\ell(z)$ is some integral modular form in $M_{\ell + 1 - 12c}(\Gamma(1))$.
Thus
$$
\sum_{n=\lceil\tfrac{\ell}{24}\rceil}^\infty 
\spt(\ell n - s_\ell) q^{n - \frac{\ell}{24}}
\equiv \Delta(z)^{c-\ell} \, L_\ell(z) \pmod{\ell},
$$
and the result follows since
$$
r_\ell = c - \ell.
$$
\end{proof}

We conclude the paper by improving a result in \cite{Br-Ga-Ma} for
the second rank moment function. From \eqn{sptid}
%%\spt(n) = n p(n) - \frac{1}{2} N_2(n),
\beq
N_2(n) = 2n\,p(n) - 2\,\spt(n).
\mylabel{eq:N2id}
\eeq
We note that 
the analog of Corollary \cor{sptmodell} holds for the partition
function $p(n)$ except the weight is $2$ less. See either 
\cite[Theorem 3.4]{Ga10a} or \cite[Theorem3]{Ah-Bo}.                
This together with Corollary \cor{sptmodell} and \eqn{N2id} implies
\begin{corollary}
\mylabel{cor:N2modell}
Suppose $\ell\ge5$ is prime. Then
\beq
\sum_{n=\lceil\tfrac{\ell}{24}\rceil}^\infty
N_2(\ell n - s_\ell) q^{n - \frac{\ell}{24}}
\equiv
\eta(z)^{r_\ell} \left( Q_\ell(z) + L_\ell(z)\right) \pmod{\ell}
\mylabel{eq:N2modellres}
\eeq
for some integral entire modular forms $Q_\ell(z)$ and $L_\ell(z)$ 
on  the full modular
group of weights $k$ and $k+2$ respectively where
$k=\ell-1 - 12\lceil\tfrac{\ell}{24}\rceil$.
\end{corollary}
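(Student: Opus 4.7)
The plan is to deduce the corollary directly from three ingredients already in hand: the identity $N_2(n)=2np(n)-2\spt(n)$ stated as \eqn{N2id}, Corollary \cor{sptmodell} (just proved), and the analog of that corollary for the partition function quoted from \cite[Theorem 3.4]{Ga10a} (equivalently \cite[Theorem 3]{Ah-Bo}). There is no new modular-forms machinery to invoke; the entire argument is a comparison of coefficients on the arithmetic progression $n\mapsto \ell n-s_\ell$ modulo~$\ell$.

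First I would specialize $N_2$ to the progression $n=\ell m-s_\ell$ and observe the key congruence
\[
2(\ell m - s_\ell) \equiv -2s_\ell \pmod{\ell},
\]
so that
\[
N_2(\ell m - s_\ell) \;\equiv\; -2s_\ell\, p(\ell m-s_\ell) \;-\; 2\,\spt(\ell m-s_\ell)\pmod{\ell}.
\]
Multiplying by $q^{m-\ell/24}$ and summing then splits the generating function in question into two pieces: a constant multiple of $\sum p(\ell m - s_\ell)q^{m-\ell/24}$ and a constant multiple of $\sum \spt(\ell m - s_\ell)q^{m-\ell/24}$. Since $p(\ell m - s_\ell)=\spt(\ell m - s_\ell)=0$ unless $\ell m - s_\ell\ge 1$ (note $\ell\nmid s_\ell$, so equality to $0$ is impossible), in both series the summation index $m$ starts at $\lceil \ell/24\rceil$, matching the range in the statement.

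Next I would apply Corollary \cor{sptmodell} to the spt-sum, producing $\eta(z)^{r_\ell}L_\ell^{\mathrm{spt}}(z)\pmod{\ell}$ with $L_\ell^{\mathrm{spt}}\in M_{\ell+1-12\lceil \ell/24\rceil}(\SL_2(\Z))$, i.e.\ weight $k+2$. In parallel I would apply the cited partition-function congruence, which has the identical shape but with weight $2$ less, yielding $\eta(z)^{r_\ell}P_\ell(z)\pmod{\ell}$ for some integral entire $P_\ell\in M_{k}(\SL_2(\Z))$. Setting
\[
Q_\ell(z) := -2s_\ell\, P_\ell(z),\qquad L_\ell(z) := -2\,L_\ell^{\mathrm{spt}}(z),
\]
both $Q_\ell$ and $L_\ell$ are integral (since $s_\ell\in\Z$) entire modular forms on the full modular group of weights $k$ and $k+2$ respectively, and the two pieces combine into $\eta(z)^{r_\ell}(Q_\ell(z)+L_\ell(z))$, which is the claimed congruence.

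Nothing here presents a genuine obstacle; the only care needed is bookkeeping the index range $m\ge\lceil \ell/24\rceil$ uniformly on both sides, and verifying that the weights produced by the two invoked congruences really are $k$ and $k+2$, so that the resulting form decomposes as stated rather than collapsing into a single weight.
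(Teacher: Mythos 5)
Your proposal is correct and is exactly the paper's argument: the paper derives the corollary in one line from \eqn{N2id}, Corollary \cor{sptmodell}, and the cited partition-function analogue of weight two less, and your write-up simply supplies the (routine) details of reducing $2(\ell m-s_\ell)\equiv-2s_\ell\pmod{\ell}$ and matching the index ranges and weights. No discrepancy with the paper's proof.
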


We illustrate Theorem \thm{Kellthm} and Corollaries \cor{sptmodell}
and \cor{N2modell} in the case $\ell=17$.
%%%%%%%%%%%%%%%%%%%%%%%%%%%%%%%%%%%%%%%%%%%%%%%%%%%%%%%%%%%%%%%%%%%%%%%%%%%%
%% > read "mbasis.txt":
%% > BAS18:=funcmbasismake(18):
%%                                   "dim=", 2
%% > _MBASIS;
%%                                [   3         ]
%%                                [_E6 , _DD _E6]
%% > G17:=series(X17*etaq(q,1,100)^34/etaq(q,17,100),q,100):
%% > J17:=series(S17*etaq(q,17,1000)^34/etaq(q,1,1000),q,1000):
%% > series(J17,q,20);
%%      12       13       14       15       16        17        18        19   
%% -17 q   - 17 q   - 34 q   - 51 q   - 85 q   - 119 q   - 187 q   - 255 q   + 
%% 
%%    / 20\
%%   O\q  /
%% > J170:=sift(J17,q,17,0,999):
%% > findhomcombo(G17+17*J170,[BAS18[1],BAS18[2]],q,1,0,no);
%%                                # of terms , 23
%%             -----possible linear combinations of degree------, 1
%%                            {-17 X[1] - 26148 X[2]}
%% > K17:=G17+17*J170:
%% > series(K17+17*E6^3+26148*DD*E6,q,50);
%%                                     / 59\
%%                                    O\q  /
%% 
%%%%%%%%%%%%%%%%%%%%%%%%%%%%%%%%%%%%%%%%%%%%%%%%%%%%%%%%%%%%%%%%%%%%%%%%%%%%
We find that
$$
K_{17}(z) = G_{17}(z) + 17\,\sum_{n=1}^\infty j_{17}(17n) q^n
 = -17\, E_6(z)^3 - 26148\,\Delta(z)\,E_6(z),
$$
%%%%%%%%%%%%%%%%%%%%%%%%%%%%%%%%%%%%%%%%%%%%%%%%%%%%%%%%%%%%%%%%%%%%%%%%%%%%
%% > s17:=add(spt(17*n+5)*q^n,n=0..100):
%% > with(rank):
%% > N2:=n->2*add(m^2*N(m,n),m=1..n):
%% > R217:=add(N2(17*n+5)*q^n,n=0..trunc(500/17)):
%% > degree(R217,q);
%%                                       29
%% > findhomcombomodp(s17,[etaq(q,1,100)^7*E6],17,q,1,0,no);
%%                                 # of terms , 22
%%                              "Nullspace computed"
%%       -----possible linear combinations of degree------, 1, ---- mod , 17
%%                                    {14 X[1]}
%% > findhomcombomodp(R217,[etaq(q,1,100)^7*E4,etaq(q,1,100)^7*E6],17,q,1,0,no);
%%                                 # of terms , 23
%%                              "Nullspace computed"
%%       -----possible linear combinations of degree------, 1, ---- mod , 17
%%                                {2 X[1] + 6 X[2]}
%%%%%%%%%%%%%%%%%%%%%%%%%%%%%%%%%%%%%%%%%%%%%%%%%%%%%%%%%%%%%%%%%%%%%%%%%%%%
$$
\sum_{n=0}^\infty \spt(17n + 5) q^{n + \frac{7}{24}} 
\equiv 
14 \, \eta(z)^7 \, E_6(z) \pmod{17},
$$
and
$$
\sum_{n=0}^\infty N_2(17n + 5) q^{n+\frac{7}{24}} 
\equiv 
\eta(z)^7 \left(2\,E_4(z) + 6\,E_6(z)\right) \pmod{17}.
$$
Here
$E_4(z)$ and $E_6(z)$ are the usual Eisenstein series
\beq
E_4(z) := 1 + 240 \sum_{n=1}^\infty \sigma_3(n) q^n, \qquad
E_6(z) := 1 - 504 \sum_{n=1}^\infty \sigma_5(n) q^n,
\mylabel{eq:E46def}
\eeq
where $\sigma_k(n) = \sum_{d\mid n} q^k$.

%%%%%%%%%%%%%%%%%%%%%%%%%%%%%%%%%%%%%%%%%%%%%%%%%%%%%%%%%%%%%%%%%%%%%%%%%%%%%%%%%%%%%

%% 
%% 

\noindent
\textbf{Acknowledgements}
I would like to thank Ken Ono and Zachary Kent for their comments
and suggestions.

\bibliographystyle{amsplain}

\end{document}